\theoremstyle{plain}
\newtheorem{theorem}{Theorem}[section]
\newtheorem*{theo*}{Theorem}
\newtheorem{corollary}[theorem]{Corollary}
\newtheorem{proposition}[theorem]{Proposition}
\newtheorem{lemma}[theorem]{Lemma}
\newtheorem{theomain}{Theorem}
\newtheorem*{assumption}{Standing Assumption}
\theoremstyle{definition}
\newtheorem{definition}[theorem]{Definition}
\newtheorem{remark}[theorem]{Remark}
\newtheorem{example}[theorem]{Example}
\newcommand*{\dd}%
  {\relax\ifnum\lastnodetype>0\mskip\medmuskip\fi\mathrm{d}}
\newcommand{\proba}{\operatorname{\mathcal{P}}}
\newcommand{\wass}{\operatorname{W}}
\newcommand{\diam}{\operatorname{diam}}
\newcommand{\basin}{\operatorname{Ba}}
\newcommand{\supp}{\operatorname{supp}}
\newcommand{\stabL}{\operatorname{Sl}}
\newcommand{\Id}{\mathrm{Id}}
\newcommand{\po}{\operatorname{p\omega}}
\newcommand{\op}[1]{\mathscr{#1}}
\newcommand{\chop}[1]{\check{\mathscr{#1}}\rlap{\phantom{#1}}}
\newcommand{\corr}[2]{\operatorname{\mathcal{C}}^{#1}_{#2}}
\newcommand{\etilde}{%
  \mskip0.5\thinmuskip\tilde\null\mskip\thinmuskip}
\newcommand{\one}{\boldsymbol{1}}
\newcommand{\Hol}{\operatorname{Hol}}
\newcommand{\hol}[2][]{\pmb{\omega}_{#2}^{#1}}
\newcommand{\holl}[1]{\pmb{\omega}_{#1\log}}
\newcommand{\C}{\mathscr{C}}
\newcommand{\alg}[1]{\mathscr{#1}}
\newcommand{\fspace}[1]{\operatorname{\mathcal{#1}}}
\newcommand{\Banach}{\fspace{B}}
\newcommand{\ent}[1][]{h_{#1}}
\newcommand{\entKS}{\ent[\mathrm{KS}]}
\newcommand{\LimTh}[1]{\mathscr{#1}}
\newcommand{\UE}{\operatorname{UE}}
\newcommand{\FreeE}{\operatorname{\mathscr{F}}}
\newcommand*{\dvol}%
  {\relax\ifnum\lastnodetype>0\mskip\medmuskip\fi\mathrm{dVol}}
\title{Extensions with shrinking fibers}
\author{Beno\^{\i}t R. Kloeckner \thanks{LAMA, Univ Paris Est Creteil, Univ Gustave Eiffel, UPEM, CNRS, F-94010, Cr\'eteil, France}}
\begin{document}
%%%%%%%%%%%%%%%%%%%%%%%%%%%%%%%%%%%%%%%%%%%%%%%%%%%%%%%%%%%%%%%
%%%%%%%%%%%%%%%%%%%%%%%%%%%%%%%%%%%%%%%%%%%%%%%%%%%%%%%%%%%%%%%
%%%%%%%%%%%%%%%%%%%%%%%%%%%%%%%%%%%%%%%%%%%%%%%%%%%%%%%%%%%%%%%

\maketitle

\begin{abstract}
We consider dynamical systems $T:X\to X$ that are extensions of a factor $S:Y\to Y$ through a projection $\pi: X\to Y$ with \emph{shrinking} fibers, i.e. such that $T$ is uniformly continuous along fibers $\pi^{-1}(y)$ and the diameter of iterate images of fibers $T^n(\pi^{-1}(y))$ uniformly go to zero as $n\to\infty$.

We prove that every $S$-invariant measure $\check\mu$ has a unique $T$-invariant lift $\mu$, and prove that many properties of $\check\mu$ lift to $\mu$: ergodicity, weak and strong mixing, decay of correlations and statistical properties (possibly with weakening in the rates).

The basic tool is a variation of the Wasserstein distance, obtained by constraining the optimal transportation paradigm to displacements along the fibers. We extend to a general setting classical arguments, enabling to translate potentials and observables  back and forth between $X$ and $Y$.
\end{abstract}

\tableofcontents

%%%%%%%%%%%%%%%%%%%%%%%%%%%%%%%%%%%%%%%%%%%%%%%%%%%%%%%%%%%%%%%
%%%%%%%%%%%%%%%%%%%%%%%%%%%%%%%%%%%%%%%%%%%%%%%%%%%%%%%%%%%%%%%
\section{Introduction}

Let $T:X\to X$ be a dynamical system where $X$ is a compact metric space, and assume that $T$ has a topological factor $S:Y\to Y$, i.e. there is a continuous onto map $\pi :X\to Y$ such that $\pi T = S\pi$. Each fiber $\pi^{-1}(y)\subset X$ is collapsed under $\pi$ into a single point $y$, and $S$ can thus be thought of as a simplification of $T$, which may retain certain of its dynamical properties but forget others. When additionally $T$ shrinks the fibers, i.e. two points $x,x'$ such that $\pi(x)=\pi(x')$ have orbits that are attracted one to another, one suspects that actually \emph{all} important dynamical features of $T$ survive in $S$: along the fibers, the dynamic is trivial anyway. It might still happen that $S$ is easier to study than $T$, in which case one can hope to obtain interesting dynamical properties of $T$ by proving them for $S$ and lifting them back. The present article aims at developing a systematic machinery to do that in the context of the thermodynamical formalism, i.e. the study of equilibrium states (invariant measure optimizing a linear combination of entropy and energy with respect to a potential).

This setting has already been largely studied, first in the symbolic case $X=\{0,1\}^{\mathbb{Z}}$ (or a subshift), $Y=\{0,1\}^{\mathbb{N}}$ (or the corresponding one-sided subshift), $\pi$ the map that forgets negative indexes, $T$ and $S$ the left shifts. Then the strategy outlined above has been used for long, see e.g. \cite{bowen1975equilibrium}. The advantage of one-sided shift is that an orbit can be looked backward in time as a non-trivial, contracting Markov Chain; one can use this to prove existence, uniqueness and statistical properties of equilibrium states for a wide range of potentials. The same reason makes expanding maps quite easier to study than hyperbolic ones. Recently, several works have used the above approach to study various flavor of hyperbolic dynamics on manifolds or domains of $\mathbb{R}^n$. However they are often written for specific systems and the technical details are often not obviously generalizable. Moreover, the basic result that an $S$-invariant measure of $S$ has a \emph{unique} $T$-invariant lift seems not to be known in general. Our first aim will be to propose a simple and general argument, based on ideas from optimal transportation, to lift invariant measures and show uniqueness. Then we shall use uniqueness and adapt folklore methods to a general framework to lift a rather complete set of properties of invariant measures.

While the dynamical study of uniformly hyperbolic maps is considered reasonably well understood, the study of various kind of non-uniformly hyperbolic maps has witnessed a large activity in the last two decades, see e.g. \cite{young98,alves2000SRB,araujo2015rapid,alves2017SRB}, the surveys \cite{alves2015srb,crovisier2015introduction} and other references cited below. Even in the uniformly expanding case, new approaches are welcome, see \cite{climenhaga2018equilibrium}.
As is well-known, the ``extension'' approach can be used to study certain uniformly and non-uniformly hyperbolic maps, when the default of hyperbolicity can be in the contraction or the expansion, or when the potential lacks H\"older  regularity (see Section \ref{sec:corollaries} and Remark \ref{rema:hyperbolic}).

%%%%%%%%%%%%%%%%%%%%%%%%%%%%%%%%%%%%%%%%%%%%%%%%%%%%%%%%%%%%%%%
\subsection{Shrinking fibers: main definition and first main result}

All measures considered are probability measures, and we denote by $\proba(X)$ the set of measures on $X$ and by $\proba_T(X)$ the set of $T$-invariant measures, both endowed with the weak-$*$ topology.

We shall consider the case when the extension $T$ exhibits some contraction along fibers; we introduce a single notion that includes a global property and continuity along fibers (details on moduli of continuity are recalled in Section \ref{sec:def-moduli}).
\begin{definition}\label{defi:shrinking}
We say that $T$ is \emph{uniformly continuous} along fibers whenever there a modulus of continuity $\bar\omega_T$ such that for all $x,x'\in X$ with $\pi(x)=\pi(x')$,
\[d(Tx,Tx') \le \bar\omega_T(d(x,x')).\]

We say that $T$ is an extension of $S$ with \emph{shrinking fibers} (keeping implicit $\pi$ and $S$) whenever $T$ is uniformly continuous along fibers and there is a sequence $(a_n)_{n\in\mathbb{N}}$ of positive numbers such that $\lim a_n = 0$ and for all $x,x'\in X$ with $\pi(x)=\pi(x')$:
\[d(T^n x,T^n x') \le a_n.\]
If $a_n=C\theta^n$ for some $\theta\in(0,1)$, (respectively: $a_n=Cn^{-d}$ for some $d>0$) we may specify that $T$ has \emph{exponentially} (respectively \emph{polynomially}) shrinking fibers, of \emph{ratio $\theta$} (respectively \emph{degree $d$}). We may specify the \emph{shrinking sequence} $(a_n)_n$, e.g. by saying that $T$ has $(a_n)_n$-shrinking fibers.
\end{definition}

For example, if for some $\theta\in(0,1)$ and all $x,x'\in X$ such that $\pi(x)=\pi(x')$ we have $d(Tx, Tx') \le \theta d(x,x')$, then $T$ has exponentially shrinking fibers; however, the latter property is weaker.

Of course, if $T$ is continuous then it is uniformly continuous along fibers;  the first part of the above definition is meant to make it possible to deal quite generally with discontinuous maps $S$. It shall be used mainly in the proof of Theorem \ref{theo:lift}, which is at the core of Theorem \ref{theo:mainLift} below.

Some of our results actually hold more generally, and to state them in their full scope we introduce the following notion.
\begin{definition}\label{defi:shrinking-average}
Given $\check\mu\in\proba_S(Y)$, we say that $T$ is an extension of $S$ whose \emph{fibers are shrunk on average with respect to $\check\mu$} whenever $T$ is uniformly continuous along fibers and there is a sequence $(\bar a_n)_{n\in\mathbb{N}}$ of positive numbers such that $\lim_n \bar a_n = 0$ and
\[ \int \diam\big(T^n(\pi^{-1}(y))\big) \dd\check\mu(y) \le \bar a_n \quad\forall n\in\mathbb{N}.\]
\end{definition}

At some points we will also need some mild additional regularity for $\pi$.
\begin{definition}
We say that the projection $\pi$ is \emph{non-singular} (with respect to $\lambda_X$ and $\lambda_Y$) when $\pi_*\lambda_X$ is equivalent to $\lambda_Y$, i.e. when for all Borel set $B\subset Y$:
\[\lambda_Y(B) > 0 \Leftrightarrow \lambda_X(\pi^{-1}(B)) > 0.\]
\end{definition}

\begin{definition}\label{defi:fibration}
We say that $\pi$ \emph{induces a continuous fibration} whenever for some modulus of continuity $\bar\omega_\pi$ and all $y,y'\in Y$, for all $x\in\pi^{-1}(y)$ there exist $x'\in\pi^{-1}(y')$ such that
\[d(x,x') \le \bar\omega_\pi(d(y,y')).\]

Then by the Measurable Selection Theorem, there exist a measurable map $\tau_y^{y'} : \pi^{-1}(y) \to \pi^{-1}(y')$ such that $d(x,\tau_y^{y'}(x)) \le \bar\omega_\pi(d(y,y'))$ for all $x\in\pi^{-1}(y)$.
\end{definition}
The modulus of continuity $\bar\omega_\pi$ shall not be confused with the modulus of continuity of the map $\pi$, which whenever needed shall be denoted by $\omega_\pi$.\\

With these definitions set up, we can state our first results gathered in the following statement. 
\begin{theomain}\label{theo:mainLift}
If $T$ is an extension of $S$ with shrinking fibers, then $\pi_*$ induces a homeomorphism from $\proba_T(X)$ to $\proba_S(Y)$. 
In particular, for each $S$-invariant measure $\check\mu$ there is a unique $T$-invariant measure $\mu$ such that $\pi_*\mu=\check\mu$. 

Moreover for all (non necessarily invariant) $\nu\in\proba(X)$ such that $\pi_*\nu=\check\mu$, we have $T^n_*\nu \to \mu$ in the weak-$*$ topology, and:
\begin{enumerate}
\item\label{enumi:theomainlift1} each of the following adjectives applies to $\mu$ (with respect to $T$) if and only if it applies to $\check\mu$ (with respect to $S$):
\emph{ergodic}, \emph{weakly mixing}, \emph{strongly mixing},
\item\label{enumi:theomainlift2} if $T$ is continuous and $X,Y$ have reference measures with respect to which $\pi$ is non-singular, then each of the following adjectives applies to $\mu$ if and only if it applies to $\check\mu$ :
\emph{physical}, \emph{observable},
\item\label{enumi:theomainlift3} if $S,T$ are continuous, then $\mu$ and $\check\mu$ have the same Kolmogorov-Sinai entropy.
\end{enumerate}
\end{theomain}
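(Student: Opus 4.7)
\medskip
\noindent\textbf{Proof plan.} The central tool is the fiberwise Wasserstein distance introduced in the abstract: for $\nu_1,\nu_2\in\proba(X)$ with $\pi_*\nu_1=\pi_*\nu_2$, set
\[ W_\pi(\nu_1,\nu_2) = \inf_\gamma \int d(x,x') \dd\gamma(x,x'), \]
where the infimum ranges over couplings $\gamma$ of $\nu_1,\nu_2$ supported on $\{(x,x'):\pi(x)=\pi(x')\}$; such couplings exist by disintegration. The crucial observation is that $(T\times T)_*$ preserves fiberwise couplings (because $\pi T=S\pi$), and the support of $(T^n\times T^n)_*\gamma$ consists of pairs at distance at most $a_n$, by the shrinking hypothesis. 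When $\nu_1,\nu_2$ are $T$-invariant, this pushed-forward coupling realises $W_\pi(\nu_1,\nu_2)\le a_n$ for every $n$, forcing $\nu_1=\nu_2$ and yielding uniqueness. The same estimate applied to an arbitrary $\nu$ with $\pi_*\nu=\check\mu$ and the lift $\mu$ gives $W_\pi(T^n_*\nu,\mu)\le a_n\to 0$, hence weak-$*$ convergence $T^n_*\nu\to\mu$, since $W_\pi$ dominates the standard Kantorovich distance $W_1$. Existence of the lift follows by extracting a weak-$*$ accumulation point of the Ces\`aro averages $\frac{1}{N}\sum_{n<N} T^n_*\nu_0$ for any $\nu_0$ with $\pi_*\nu_0=\check\mu$ (built from a measurable section); the limit is $T$-invariant and still projects to $\check\mu$ by $S$-invariance. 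The bijection $\pi_*:\proba_T(X)\to\proba_S(Y)$ is continuous (since $\pi$ is), and a continuous bijection from a compact space to a Hausdorff space is a homeomorphism.

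Items \textit{i}--\textit{iii} then follow by combining this homeomorphism with classical techniques. \emph{Ergodicity} is preserved because affine homeomorphisms carry extremal points to extremal points. For \emph{weak mixing}, $T\times T$ is itself a shrinking-fiber extension of $S\times S$ (shrinking sequence $2a_n$ with the sum metric), and $\mu\otimes\mu$ is the unique lift of $\check\mu\otimes\check\mu$, so ergodicity transfers between the products, which is equivalent to weak mixing of each factor. For \emph{strong mixing}, the key approximation is $\|f\circ T^k-\tilde f_k\circ\pi\|_\infty\le\omega_f(a_k)$, where $\tilde f_k(y):=f(T^k\sigma(y))$ for a fixed measurable section $\sigma$ and $\omega_f$ is a modulus of continuity of $f$. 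Using $T$-invariance of $\mu$ to rewrite $\int(f\circ T^n)g\dd\mu=\int(f\circ T^{n+k})(g\circ T^k)\dd\mu$, replacing $f\circ T^{n+k}$ by $\tilde f_{n+k}\circ\pi$ and $g\circ T^k$ by $\tilde g_k\circ\pi$ (together with the identity $\tilde f_{n+k}=\tilde f_k\circ S^n+O(\omega_f(a_k))$, which follows from the same shrinking argument), applying strong mixing of $\check\mu$ for fixed $k$ as $n\to\infty$, and finally letting $k\to\infty$, yields strong mixing of $\mu$. \emph{Physical} and \emph{observable} measures lift because the $\pi$-preimage of a basin of $\check\mu$ has positive $\lambda_X$-measure by non-singularity, and for any $x$ therein the empirical measures project under $\pi_*$ to the empirical measures of $\check\mu$ at $\pi(x)$, which converge; every weak-$*$ accumulation of the orbit empirical measures is thus a $T$-invariant lift of $\check\mu$, uniquely $\mu$. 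Finally, the \emph{entropy} equality comes from the Ledrappier--Walters variational principle
\[ \sup_{\pi_*\mu=\check\mu} h_\mu(T) \;=\; h_{\check\mu}(S) + \int h_\mathrm{top}\bigl(T,\pi^{-1}(y)\bigr) \dd\check\mu(y); \]
the fiber topological entropies vanish because once $a_n<\varepsilon$, no two points of $\pi^{-1}(y)$ can be $(m,\varepsilon)$-separated for any $m\ge n$, which bounds the separated cardinalities uniformly in $m$, and uniqueness of the lift collapses the supremum to $h_\mu(T)$.

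The genuinely new ingredient is the fiberwise coupling argument of the first paragraph, which delivers uniqueness of the lift and convergence for arbitrary $\nu$ projecting to $\check\mu$ in one stroke, relying only on disintegration and the factor relation $\pi T=S\pi$. Everything else reduces either to this uniqueness (ergodicity, physical/observable) or to classical approximation made quantitative by the shrinking sequence (mixing, entropy). The most delicate bookkeeping is in the strong-mixing step, where the approximation by ``checked'' functions is applied to both factors of the correlation integral and must be compatible with the shift $S^n$ acting on $Y$; but this is routine once the framework is in place.
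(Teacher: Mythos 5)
Your framework --- the fiberwise Wasserstein metric, couplings pushed forward by $T^n\times T^n$, and the ensuing deductions --- mirrors the paper's structure and most of its details (ergodicity via extremality, weak mixing via $T\times T$ over $S\times S$, strong mixing via quantitative approximation, entropy via Ledrappier--Walters with vanishing fiber entropy). There is, however, one genuine gap: your existence argument via Ces\`aro averages. You extract a weak-$*$ accumulation point $\mu$ of $\frac1N\sum_{n<N}T^n_*\nu_0$ and assert it is $T$-invariant. To justify that, one must pass $\int f\circ T\dd\nu_{N_k}\to\int f\circ T\dd\mu$ for $f\in\C^0(X)$, which requires $f\circ T$ to be continuous, i.e.\ requires $T$ continuous. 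But the theorem only assumes $T$ \emph{uniformly continuous along fibers} (Definition \ref{defi:shrinking}), not continuous. This is precisely the trap flagged in Remark \ref{rema:APPV}, illustrated by the counterexample in Remark \ref{r:counter-ex}. The paper instead shows $(T_*^n\nu_0)_n$ is Cauchy for $\wass^{\check\mu}$ (invoking completeness of that metric, Proposition \ref{prop:complete}), obtains the limit $\mu$ in $\pi_*^{-1}(\check\mu)$, and proves its invariance from the estimate $\wass(T_*^{n+1}\nu_0,T_*\mu)\le\bar\omega_T\big(\wass^{\check\mu}(T_*^n\nu_0,\mu)\big)$, which only uses the modulus $\bar\omega_T$ along fibers. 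You already derived the Cauchy estimate $W_\pi(T_*^n\nu,T_*^m\nu)\le a_{\min(n,m)}$; the fix is to take the limit in $W_\pi=\wass^{\check\mu}$ rather than via Ces\`aro averaging, and then run the $\bar\omega_T$ argument to conclude invariance.

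A smaller point: in the strong-mixing step you build $\tilde f_k=f\circ T^k\sigma$ from a section, whereas the paper uses the disintegration $\xi(f\circ T^k)$; both routes work, but your uniform bound $\|g\circ T^k-\tilde g_k\circ\pi\|_\infty\le\omega_g(a_k)$ needs $g$ continuous, so for general $g\in L^2(\mu)$ one must insert an $L^2$-approximation by a continuous function (the paper approximates $f$ while keeping $g$ arbitrary in $L^1$, via Lemma \ref{lemm:CorrBound}). This is the bookkeeping you acknowledge, not a gap.
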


Theorem \ref{theo:mainLift} is unsurprising, and some parts are already known in more or less general settings (e.g. lifting of physicality); however the uniqueness of the lift was not known in general, and simplifies a lot the proof of further properties. 
It is in particular interesting to compare the existence and uniqueness part of Theorem \ref{theo:mainLift} to Section 6.1 of \cite{araujo2009singular} where Araujo, Pacifico, Pujals and Viana construct a lift of $\check\mu$. The first advantage of our result is that we prove uniqueness among all invariant measures, while they get uniqueness only under the property they use in the construction. Second, we need milder assumptions (see Remark \ref{rema:APPV}).

Castro and Nascimento have studied in \cite{castro2017statistical} two kinds of maps, the first one fitting in the theme of the present article. Namely, they consider the case when $S$ is a non-uniformly expanding map in the family introduced by Castro and Varandas \cite{CV} and $T$ is exponentially contracting along fibers. They focus there on the maximal entropy measure for $T$, proving it exists, is unique, and enjoys exponential decay of correlations and a Central Limit Theorem for H\"older observables. Leaving aside the statistical properties for now, Theorem \ref{theo:mainLift} in particular shows that existence and uniqueness of the maximal entropy measure for $T$ does not depend on the specifics of $S$ nor on the rate of contraction along fibers (as said, we actually do not even need $T$ to be a contraction along fibers, only to shrink them globally): item \ref{enumi:theomainlift3} is a broad generalization of Theorem A from \cite{castro2017statistical} since under  the only assumptions that $S,T$ are continuous and that fibers are shrinking, it shows that $T$ has a unique measure of maximal entropy if and only if $S$ does.

%In the presence of reference measures (e.g. Lebesgue for domains of $\mathbb{R}^n$, volume on manifolds), it is not true that an absolutely continuous probability (Acip) for $S$ lifts to an Acip for $T$. But item \ref{enumi:theomainlift2} of Theorem \ref{theo:mainLift} shows under mild conditions that an ergodic Acip of $S$, being in particular physical, lifts to a physical measure of $T$, which is the best one can expect. The notion of \emph{observable} measure was introduced by Catsigeras and Enrich in \cite{catsigeras2011srb} as an alternative to physical measures, useful when the latter do not exist. We see that this notion lifts just as well as physicality.

%%%%%%%%%%%%%%%%%%%%%%%%%%%%%%%%%%%%%%%%%%%%%%%%%%
\subsection{Further main results: thermodynamical formalism}

In our subsequent results, we shall assume $T$ is Lipschitz and this hypothesis deserves an explanation. We will often need to work in some functional spaces where observables or potentials are taken, and we made the choice of \emph{generalized H\"older spaces}, i.e. spaces of function with modulus of continuity at most a multiple of some reference, arbitrary modulus. This choice seems a good balance between generality (it includes functions less regular than H\"older, enabling us to consider in particular polynomial rates of shrinking) and clarity (proofs stay pretty simple and the amount of definition needed is significant but not overwhelming). It is often a crucial ingredient that the iterated Koopman operators $f\mapsto f\circ T^k$ are bounded on the chosen functional space, with good control of their norms; asking $T$ to be Lipschitz is the natural hypothesis to ensure this for generalized H\"older spaces. Where one interested of discontinuous maps (e.g. when $S$ is discontinuous), the principle of proofs could certainly be adapted but one would need (as usual) to work in a suitable functional spaces. Another advantage of our choice is that we can work directly with the Wasserstein distance between measures.\\

The convergence result ($T^n\nu\to \mu$ whenever $\pi_*\nu=\check\mu$) in Theorem \ref{theo:mainLift} seems new in this generality. It is however not as satisfying as those obtained by Galatolo and Lucena in Section 5.1 of \cite{galatolo2015spectral} in their particular setting, where  instead of $\pi_*\nu=\check\mu$ it is only asked that $\pi_*\nu$ is absolutely continuous with respect to $\check\mu$ (with some regularity assumptions on the density). In this direction, we prove the following variation of \cite[Section 5.1]{galatolo2015spectral} (our hypotheses are quite general, but we assume $T$ to be Lipschitz and our convergence is in the Wasserstein metric instead of the particular metric constructed in \cite{galatolo2015spectral}).
\begin{theomain}\label{theo:mainSG}
Assume that $T$ is an extension of $S$ with exponentially shrinking fibers; that $S$ admits a conformal measure $\lambda_Y\in\proba(Y)$ such that the associated transfer operator has a spectral gap on some Banach space $\Banach$, whose normalized eigenfunction is denoted by $h$; that $T$ is Lipschitz and that $\pi$ induces a H\"older-continuous fibration;
and let $\mu$ be the unique $T$-invariant lift of $h\dd\lambda_Y$.

Then for all $\nu\in\proba(X)$ such that $\pi_*\nu$ is absolutely continuous with respect to $\lambda_Y$ with density in $\Banach$, the sequence $(T_*^n\nu)_n$ converges to $\mu$ exponentially fast in the Wasserstein metric.
\end{theomain}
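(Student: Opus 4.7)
The plan is to compare $T_*^n\nu$ to $\mu$ through an intermediate measure $\tilde\nu_k \in \proba(X)$ having the ``correct'' projection $h\,\dd\lambda_Y = \pi_*\mu$, and to split time $n = j + k$ so as to balance two exponentially contracting effects: the spectral gap of $S$ on $Y$, which governs the large-$k$ factor, and the exponential shrinking of fibers on $X$, which governs the large-$j$ factor.

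\emph{Base contraction.} The factor relation gives $\pi_*(T_*^k\nu) = S_*^k \pi_*\nu$, and the spectral gap hypothesis implies $S_*^k \pi_*\nu \to h\,\dd\lambda_Y$ exponentially fast in $\Banach$. Using the (standard) comparison $\wass \le C\|\cdot\|_{\Banach}$ on the compact base $Y$, one deduces $\wass(\pi_*(T_*^k\nu), h\,\dd\lambda_Y) \le C\rho^k$ for some $\rho \in (0,1)$.

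\emph{Lifting the base estimate.} Using the Hölder-continuous fibration of exponent $\alpha$, I construct $\tilde\nu_k$ with $\pi_*\tilde\nu_k = h\,\dd\lambda_Y$ as follows: fix an optimal $\wass$-coupling $\gamma$ on $Y$ between $\pi_*(T_*^k\nu)$ and $h\,\dd\lambda_Y$, disintegrate $T_*^k\nu$ along fibers, and push each conditional by the measurable selection $\tau_y^{y'}$ of Definition \ref{defi:fibration}. The induced coupling of $T_*^k\nu$ and $\tilde\nu_k$ on $X\times X$ has cost at most $\int \bar\omega_\pi(d(y,y'))\,\dd\gamma(y,y') \le C\,\wass(\pi_*(T_*^k\nu), h\,\dd\lambda_Y)^\alpha \le C'\rho^{k\alpha}$, by Jensen's inequality applied to the concave map $r \mapsto r^\alpha$.

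\emph{Fiber contraction and balancing.} Because $\pi_*\tilde\nu_k = \pi_*\mu$, disintegrating both over $h\,\dd\lambda_Y$ and taking the product of conditionals yields a coupling supported on $\{\pi x = \pi x'\}$ whose image by $(T^j, T^j)$ is a coupling of $T_*^j \tilde\nu_k$ and $T_*^j\mu = \mu$ of cost at most $a_j \le C\theta^j$, thanks to the exponential shrinking of fibers. Combining with the Lipschitz bound $\wass(T_*^{j+k}\nu, T_*^j\tilde\nu_k) \le \mathrm{Lip}(T)^j \wass(T_*^k\nu, \tilde\nu_k)$ and the triangle inequality, this gives
\[
\wass(T_*^n\nu, \mu) \le C\,\mathrm{Lip}(T)^j\, \rho^{k\alpha} + C\theta^j
\]
for every splitting $n = j + k$. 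Choosing $j = \lfloor \beta n \rfloor$ with $\beta$ small enough (depending on $\mathrm{Lip}(T)$, $\theta$, $\rho$, $\alpha$) makes both terms exponentially small in $n$.

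\emph{Main obstacle.} The delicate point is precisely this balance: the Lipschitz expansion $\mathrm{Lip}(T)^j$ must be dominated by the base contraction $\rho^{k\alpha}$, which is the reason one cannot simply push an arbitrary lift of $h\,\dd\lambda_Y$ by $T^n$ and invoke a contraction principle directly. The Hölder exponent $\alpha$ of the fibration enters the resulting rate in an intrinsic way, so a weaker modulus of continuity would genuinely degrade it. A minor but necessary ingredient is the embedding $\Banach \hookrightarrow L^1(\lambda_Y)$ underlying the comparison $\wass \le C\|\cdot\|_\Banach$, which is standard for spectral-gap Banach spaces on compact bases but should be made explicit in the final writeup.
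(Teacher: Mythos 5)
Your proposal is correct and follows essentially the same route as the paper: split time as $n=j+k$, use the spectral gap on $Y$ to bring $\pi_*(T_*^k\nu)$ exponentially close to $h\,\dd\lambda_Y$, transfer this to $X$ via the H\"older fibration (paper's Lemma \ref{lemm:stabL}, which you re-derive inline, including the Jensen/concavity step), then use fiber shrinking after pushing $j$ more times, and finally balance the $L^j$ loss against the two decays by choosing $j=\lfloor\beta n\rfloor$ with $\beta$ small. The only cosmetic difference is that the paper passes $\wass\le\mathrm{TV}=\lVert\cdot\rVert_{L^1}\le\lVert\cdot\rVert_\Banach$ explicitly (via Definition \ref{defi:SG} requiring $\lVert\cdot\rVert\ge\lVert\cdot\rVert_\infty$), which is exactly the embedding you flag as needing to be made explicit.
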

The needed, classical definitions are given in Section \ref{sec:prelim}, in particular the Wasserstein metric is defined in Section \ref{sec:def-Wasserstein} (for now, let us simply say that in our compact setting it metrizes the weak-$*$ topology) and transfer operators are defined in Section \ref{sec:def-transfer}.
A more general (but less precise) result is given in Corollary \ref{coro:StableLeaf}.\\

We now turn to equilibrium states and their statistical properties.
It will be convenient to use the following definition (the reader may want to have a look at Section \ref{sec:def-moduli} about moduli of continuity and generalized H\"older spaces; in particular, we shall use the very mild modulus of continuity $\holl{\alpha}(r)\simeq (\log\frac1r)^{-\alpha}$).
\begin{definition}
Let $\omega_p,\omega_o$ be moduli of continuity, let $\rho\in(0,\infty]$ and let $\LimTh{T}$ be the name of a limit theorem for discrete-time random processes (e.g. $\mathrm{LIL}$ for the Law of Iterated Logarithm, $\mathrm{CLT}$ for the Central Limit Theorem, or $\mathrm{ASIP}$ for the Almost Sure Invariance Principle, see Definition \ref{defi:stat}). We shall say that $T$ \emph{has unique equilibrium states for potentials in $\Hol_{\omega_p}(X)$ of norm less than $\rho$, with limit theorem $\LimTh{T}$ for observables in $\Hol_{\omega_o}(X)$} (in short, that $T$ satisfies $\UE(\omega_p[\rho];\LimTh{T},\omega_o)$) whenever:
\begin{enumerate}
\item for all potentials $\varphi\in\Hol_{\omega_p}(X)$ such that $\lVert \varphi\rVert_{\omega_p}<\rho$, there exist a unique \emph{equilibrium state} $\mu_\varphi$, i.e. a maximizer of the free energy $\FreeE(\mu) = \entKS(T,\mu)+\mu(\varphi)$ over all $\mu\in\proba_T(X)$ (where $\entKS$ denotes Kolmogorov-Sinai entropy), and
\item for all $f\in \Hol_{\omega_o}(X)$, the random process $(f\circ T^k(Z))_{k\in\mathbb{N}}$, where $Z$ is a random variable with law $\mu_\varphi$, satisfies the limit theorem $\LimTh{T}$ (see Definition \ref{defi:stat} for more precisions).
\end{enumerate}
When there is no bound on the norm of potential, i.e. $\rho=\infty$, we may shorten $\UE(\omega_p[\infty];\LimTh{T},\omega_o)$ into $\UE(\omega_p;\LimTh{T},\omega_o)$. When this property is satisfied for all H\"older-continuous potentials or observables, whatever the H\"older exponent, we write $\hol{*}$ in place of $\omega_p$ or $\omega_o$.  When we only want to state existence and  uniqueness of equilibrium state, we agree to take $\LimTh{T}=\varnothing$ and we can simplify the notation into $\UE(\omega_p;\varnothing)$ as there is no need to specify the observables.
\end{definition}

\begin{theomain}\label{theo:mainstat}
Assume that $T$ is $L$-Lipschitz and that $\pi$ is $\beta$-H\"older and admits a Lipschitz section and let $\LimTh{T}\in\{\varnothing,\mathrm{LIL},\mathrm{CLT},\mathrm{ASIP}\}$.
\begin{enumerate}
\item\label{enumi:mainstat1} Assume that the fibers are exponentially shrinking with ratio $\theta$, let $\alpha\in(0,1]$ and set $\gamma = \frac{\alpha\beta}{1-\log L/\log \theta}$. If $S$ satisfies $\UE(\hol{\gamma}[\rho];\LimTh{T},\hol{*})$ then $T$ satisfies $\UE(\hol{\alpha}[C\rho];\LimTh{T},\hol{*})$ for some $C>0$.
\item\label{enumi:mainstat2} Assume that the fibers are polynomially shrinking with degree $d>1$, consider $\alpha,\gamma \in (1/d,1]$ and set $\alpha'=\alpha d-1$ and $\gamma'=\gamma d-1$. If $S$ satisfies $\UE(\holl{\alpha'}[\rho];\LimTh{T},\holl{\gamma'})$, then $T$ satisfies $\UE(\hol{\alpha} [C\rho];\LimTh{T},\hol{\gamma})$ for some $C>0$.
\item\label{enumi:mainstat3} Assume that the fibers are exponentially shrinking, consider $\alpha,\gamma>1$ and let $\alpha'=(\alpha-1)/2$ and $\gamma'=(\gamma-1)/2$. If $S$ satisfies $\UE(\holl{\alpha'}[\rho];\LimTh{T}, \holl{\gamma'})$ then $T$ satisfies  $\UE(\holl{\alpha}[C\rho];\LimTh{T}, \holl{\gamma})$ for some $C>0$.
\end{enumerate}
\end{theomain}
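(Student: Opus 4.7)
The plan is to carry out the classical cohomology pushdown uniformly across the three regimes: for each potential $\varphi$ on $X$, construct a potential $\check\varphi$ on $Y$ and a bounded continuous function $u$ on $X$ with $\varphi=\check\varphi\circ\pi+u\circ T-u$, then invoke Theorem~\ref{theo:mainLift} to identify equilibrium states and transfer limit theorems. Using the Lipschitz section $\sigma$ furnished by the hypothesis, I would set
\[ u(x):=\sum_{n\geq 0}\bigl[\varphi(T^n x)-\varphi(T^n\sigma\pi x)\bigr], \]
each term bounded by $\omega_p(a_n)$ because $T^n x$ and $T^n\sigma\pi x$ share the fiber over $S^n\pi x$. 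The series converges whenever $\sum_n\omega_p(a_n)<\infty$, which holds in all three regimes under the assumptions on $\alpha$. A careful reshuffling of partial sums (kept telescoping, to avoid separating non-convergent series) then yields
\[ \check\varphi(y)=\varphi(\sigma y)+\sum_{n\geq 0}\bigl[\varphi(T^{n+1}\sigma y)-\varphi(T^n\sigma Sy)\bigr], \]
whose summands are again controlled by $\omega_p(a_n)$ since $T\sigma y$ and $\sigma Sy$ share the fiber over $Sy$.

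The core of the argument is estimating the modulus of continuity of $\check\varphi$. For $r=d(y,y')$ small, each summand
\[ E_n := \bigl[\varphi(T^{n+1}\sigma y)-\varphi(T^n\sigma Sy)\bigr]-\bigl[\varphi(T^{n+1}\sigma y')-\varphi(T^n\sigma Sy')\bigr] \]
admits two complementary bounds. Regrouping along the $y\leftrightarrow y'$ direction, using that $T$ is $L$-Lipschitz and that the $\beta$-H\"olderness of $\pi$ makes $S$ essentially $\beta$-H\"older (via $d(Sy,Sy')=d(\pi T\sigma y,\pi T\sigma y')\lesssim r^\beta$), gives $|E_n|\lesssim \omega_p(L^n r^\beta)$; regrouping along the $\sigma y\leftrightarrow\sigma Sy$ direction and invoking fiber shrinking gives $|E_n|\lesssim \omega_p(a_n)$. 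Splitting the sum at the crossover index $N$ produces the announced regularity: in case \ref{enumi:mainstat1}, equating $L^{N\alpha}r^{\alpha\beta}$ and $\theta^{N\alpha}$ gives $N\sim\beta\log(1/r)/\log(L/\theta)$ and $|\check\varphi(y)-\check\varphi(y')|\lesssim r^{\alpha\beta/(1-\log L/\log\theta)}$; in case \ref{enumi:mainstat2} the tail $\sum_{n\geq N}n^{-d\alpha}\sim N^{1-d\alpha}$ dominates the individual terms and the balance, done in log scale, yields the log-H\"older exponent $d\alpha-1$; case \ref{enumi:mainstat3} is handled analogously with $\omega_p=\holl{\alpha}$ and exponential $a_n$. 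Tracking constants in these estimates gives $\|\check\varphi\|_{\omega_p'}\leq C\|\varphi\|_{\omega_p}$ uniformly, from which the factor $C\rho$ in the conclusion follows.

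The transfer of conclusions is then rapid. Since $u\circ T-u$ has zero integral against any $T$-invariant measure, equilibrium states of $\varphi$ on $X$ coincide with those of $\check\varphi\circ\pi$, which by Theorem~\ref{theo:mainLift} are in $\pi_*$-bijection with equilibrium states of $\check\varphi$ on $Y$; uniqueness transfers from the $S$-hypothesis. Applying exactly the same pushdown to an observable $f$ produces $\check f$ in the prescribed regularity class together with a bounded coboundary $v$, so that Birkhoff sums of $f$ and of $\check f\circ\pi$ differ by $v\circ T^n-v$, bounded uniformly in $n$; the LIL, CLT or ASIP for $\check f$ under $\check\mu_{\check\varphi}$ therefore transfers verbatim to $f$ under $\mu_\varphi$. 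The main obstacle is the regularity calculation in the middle paragraph: optimizing the balance between Lipschitz growth of $T$ and shrinking of fibers requires care, and in the log-H\"older regimes the delicate point is that it is the tail sum $\sum_{n\geq N}\omega_p(a_n)$ rather than the individual term $\omega_p(a_n)$ that controls the resulting exponent.
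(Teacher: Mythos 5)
Your proposal is correct and follows essentially the same route as the paper: the coboundary $h=\sum_{n\ge0}\big(\varphi T^n\sigma\pi-\varphi T^n\big)$ (your $u=-h$) of Lemma~\ref{lemm:coboundary}, the same two complementary estimates on the summands (fiber-shrinking for the tail, $L$-Lipschitzness of $T$ and $\beta$-H\"olderness of $\pi$ for the head), the same split at an index $N(r)$ balancing the two sums as in Theorem~\ref{theo:coboundary} and Corollary~\ref{coro:coboundary}, and the same transfer of limit theorems via bounded Birkhoff differences as in Proposition~\ref{prop:statistical}, with entropy preservation (Theorem~\ref{theo:mainLift}, item~\ref{enumi:theomainlift3}, i.e.\ Proposition~\ref{prop:entropy}) supplying the bijection of equilibrium states. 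The only cosmetic difference is that you estimate $\check\varphi$ directly on $Y$ rather than estimating $h$ on $X$ and then composing with the Lipschitz section, which is equivalent.
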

Many other combinations of moduli of continuity and shrinking speed can be considered, see Theorem \ref{theo:stat}. The main tool is to construct from a potential or an observable on $X$ a suitable potential or observable on $Y$. For this, we generalize a method that is classical in the symbolic setting: adding a coboundary to make the potential or observable constant along fibers.

Item \ref{enumi:mainstat1} generalizes Theorems A and C of \cite{castro2017statistical}: Castro and Nascimento where concerned with the maximal entropy measure, i.e. the equilibrium state for the null potential, while item \ref{enumi:mainstat1} provides in their setting (using the known results for $S$ a Castro-Varandas maps \cite{CV}) existence, uniqueness and CLT for H\"older observables for the equilibrium state of any H\"older potential of small enough norm. The generalization is actually far broader, since one can take much more varied base maps $S$ for which equilibrium states have the desired limit theorem (see e.g. \cite{melbourne2002central,MN05, gouezel2010almost} for the ASIP). Some examples will be provided below.

Interestingly, it appears more efficient to directly lift limit theorems from $S$ to $T$ than to lift decay of correlations and then use them to prove limit theorem for $T$. Nevertheless, decay of correlations have a long history and are prominent features of invariant measures, and it thus makes sense to lift them as well. In this regard, we obtain the following result.
\begin{theomain}\label{theo:maindecay}
Let $T$ be a Lipschitz extension of $S$ with shrinking fibers, assume that there is a Lipschitz section $\sigma$, let $\mu$ be $T$-invariant probability measure and $\check\mu := \pi_*\mu$ be the corresponding $S$-invariant measure.
\begin{enumerate}
\item\label{enumi:decay1} If for some $\alpha_0\in(0,1]$ the transfer operator $\chop{L}$ of $(S,\check\mu)$ has a spectral gap in each H\"older space $\Hol_\alpha(Y)$ ($\alpha\in(0,\alpha_0]$) and if fibers are exponentially shrinking, then $\mu$ has exponential decay of correlation in each H\"older space.
\item\label{enumi:decay2} If for some $\alpha\in (0,1]$ and all $n\in\mathbb{N}$ the transfer operator $\chop{L}$ of $(S,\check\mu)$ satisfies
\[ \Hol_{\alpha}(\chop{L}^n h) \lesssim \Hol_{\alpha}(h) \quad\text{and}\quad \lVert \chop{L}^n f \rVert_\infty \lesssim \frac{\Hol_{\alpha d \log}(f)}{n^p} \]
whenever $h\in\Hol_{\alpha}(Y)$, $f\in\Hol_{\alpha d\log}(Y)$ with $\check\mu(f)=0$,
and if the fibers of $\pi$ are polynomially shrinking of degree $d$, then $\mu$ has polynomial decay of correlation of degree $\min(\alpha d,\frac{p}{2})$ for all $\hol{\alpha}$-continuous observables.
\item\label{enumi:decay3} If for some $\alpha >0$ and all $n\in\mathbb{N}$ the transfer operator $\chop{L}$ of $(S,\check\mu)$ satisfies
\[ \Hol_{\alpha \log}(\chop{L}^n h) \lesssim \Hol_{\alpha \log}(h) \quad\text{and}\quad \lVert \chop{L}^n f \rVert_\infty \lesssim \frac{\Hol_{\frac\alpha2 \log}(f)}{n^p} \]
whenever $h\in\Hol_{\alpha\log}(Y)$, $f\in\Hol_{\frac\alpha2\log}(Y)$ with $\check\mu(f)=0$,
and if the fibers of $\pi$ are exponentially shrinking, then $\mu$ has polynomial decay of correlation of degree $\min(\alpha,\frac{p}{2})$ for all $\holl{\alpha}$-continuous observables.
\end{enumerate}
\end{theomain}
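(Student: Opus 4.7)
The plan is to adapt the classical coboundary technique from symbolic dynamics. For a sufficiently regular $g$ on $X$, I define
\[
u_g(x) := \sum_{k \ge 0}\bigl(g(T^k x) - g(T^k \sigma\pi x)\bigr);
\]
fiber shrinking bounds each summand by $\omega_g(a_k)$, so the series converges, and a direct telescoping computation shows that $g - u_g + u_g\circ T$ is constant along each fiber of $\pi$, equal to $\check g \circ \pi$ for $\check g(y) := (g - u_g + u_g\circ T)(\sigma y)$. To control the moduli of continuity of $u_g$ and $\check g$, one splits the defining series at an optimal index: low-$k$ terms are bounded via the Lipschitz estimate $|g(T^k x) - g(T^k x')| \lesssim \omega_g(L^k d(x,x'))$ (since $T$ is $L$-Lipschitz and $\sigma$ is Lipschitz), high-$k$ terms by $\omega_g(a_k)$, and the split is chosen to balance them. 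This reproduces the reduced-exponent $\Hol_\omega$ moduli matching the spaces in each item.

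Next, I would reduce correlations on $X$ to correlations on $Y$. Using the cohomological identity $g = \check g \circ \pi + u_g - u_g\circ T$ together with $T$-invariance of $\mu$ (which kills $u_g \circ T - u_g$ in expectation),
\[
\mu(f \cdot g \circ T^n) - \mu(f)\mu(g) = \bigl[\mu(f \cdot \check g \circ S^n \circ \pi) - \mu(f)\check\mu(\check g)\bigr] + \mu\bigl(f \cdot (u_g \circ T^n - u_g \circ T^{n+1})\bigr).
\]
The transfer-operator identity $\pi_*(\op L \phi) = \chop L(\pi_* \phi)$, which follows immediately from $S\pi = \pi T$ once $\pi_*\phi$ is defined as the conditional expectation of $\phi$ given $\pi$, converts the main term to $\int_Y (\chop L^n F - \check\mu F)\,\check g\, d\check\mu$ with $F := \pi_* f$, and the decay hypothesis on $\chop L$ (together with a fiber-coupling argument using the Lipschitz section to give $F$ sufficient regularity) bounds it at the stated rate. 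The residual coboundary term is rewritten via duality as $\mu((\op L^n f - \op L^{n+1} f)\,u_g)$, reducing analogously to a decay estimate for $\chop L^n F - \chop L^{n+1} F$ on $Y$ plus a fiber-deviation term of order $\omega_{u_g}(a_n)$ coming from projecting $u_g$ onto $Y$ via the shrinking.

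For item \ref{enumi:decay1} the spectral gap of $\chop L$ on $\Hol_\alpha(Y)$ directly gives exponential decay on $X$, since both main and correction terms are controlled in sup-norm. For items \ref{enumi:decay2} and \ref{enumi:decay3} one has to interpolate between the two components of the hypothesis: the $\Hol_\alpha$ (resp.\ $\Hol_{\alpha\log}$) boundedness of $\chop L^n$ applied to $F$, which controls regularity but gives no decay, and the $L^\infty$-decay at rate $n^{-p}$ valid only for test functions in the weaker space $\Hol_{\alpha d \log}$ (resp.\ $\Hol_{\alpha/2\log}$). Combining these via a Cauchy--Schwarz / convexity step yields the ``half-rate'' $n^{-p/2}$; balancing it against the fiber-shrinking contribution $\omega_g(a_n) \lesssim n^{-\alpha d}$ (resp.\ $\holl{\alpha}(\cdot)$-type) gives the advertised degrees $\min(\alpha d, p/2)$ and $\min(\alpha, p/2)$.

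The main obstacle I expect is the quantitative control in the first step: the coboundary $u_g$ and its projection $\check g$ must land in precisely the generalized-H\"older space in which the hypothesis on $\chop L$ is formulated, and this requires tracking how the Lipschitz constant of $T$, the shrinking sequence $a_n$, the Lipschitz constant of $\sigma$, and the modulus $\omega_g$ interact in the bound for the split series. In the polynomial and low-regularity regimes of items \ref{enumi:decay2}--\ref{enumi:decay3}, the moduli must land in the delicate $\holl{\cdot}$ class, and matching them to the hypothesis spaces is where most of the bookkeeping lies; once this matching is done and $F$ is placed in the right space via the Lipschitz section, the decay estimates follow routinely from the reduction above.
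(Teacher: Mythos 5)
Your plan is a genuinely different approach from the paper's, and unfortunately it has a gap precisely at the step you describe as "the residual coboundary term."

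The coboundary identity $g = \check g\circ\pi + u_g - u_g\circ T$ gives
\[
g\circ T^n - \check g\circ S^n\circ\pi \;=\; u_g\circ T^n - u_g\circ T^{n+1}.
\]
When one \emph{sums} this over $1\le k\le n$ the right-hand side telescopes to $u_g\circ T - u_g\circ T^{n+1} = O(1)$ uniformly in $n$; that is exactly why the coboundary technique succeeds for the LIL, CLT and ASIP in Proposition~\ref{prop:statistical}, where an $O(1)$ perturbation of a growing Birkhoff sum is negligible. But for a \emph{single} correlation there is no telescoping: you need $\mu\bigl(f\cdot(u_g\circ T^n - u_g\circ T^{n+1})\bigr)$ to go to zero at the advertised rate, and this is itself a pair of correlations of $f$ with $u_g$ --- precisely the quantity you are trying to control. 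Rewriting it as $\mu\bigl((\op L^n f - \op L^{n+1}f)\,u_g\bigr)$ does not help: the decay of $\op L^n f$ on $X$ is not a hypothesis (the hypothesis bears on $\chop L$ on $Y$), and nothing forces $\op L^n f$ to become approximately constant on fibers, nor does $u_g$ come close to being fiber-constant (its oscillation on a fiber is of size $\sum_k\omega(a_k)$, which is a constant, not $\omega_{u_g}(a_n)$). So the "reduction to a decay estimate for $\chop L^n F - \chop L^{n+1}F$ on $Y$ plus a fiber-deviation term" does not go through as stated.

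The paper sidesteps the issue entirely. Rather than adding a coboundary to make $g$ cohomologous to a fiber-constant function, it replaces $f\circ T^k$ (where $n=k+m$ is split) by its actual fiber-average $\xi(f\circ T^k)\circ\pi$, which is close to $f\circ T^k$ in $\sup$-norm by $\omega(a_k)$ precisely because $T^k$ shrinks fibers (Lemma~\ref{lemm:CorrBound}). This gives
\[
\corr{k+m}{\mu}(f,g)\;\le\;\corr{m}{\check\mu}\bigl(\xi(f\circ T^k),\xi(g)\bigr)+\omega(a_k)\lVert g\rVert_{L^1(\mu)},
\]
an exact inequality with no residual to control. The only remaining technical point is that $\xi(f\circ T^k)$ must live in the function space where the hypothesis on $\chop L$ is phrased; that is the content of Theorem~\ref{theo:regularity} (Regularity of Disintegrations). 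Your remark that $F=\pi_*f$ "needs sufficient regularity, obtained via a fiber-coupling argument using the Lipschitz section" is on the right track --- this is exactly Lemma~\ref{lemm:regularity} --- but it is needed inside the disintegration framework, not the coboundary one. Finally, the optimization over the split $n=k+m$ that produces the exponent $\min(\alpha d,p/2)$ is an explicit balance between $k^{\alpha d}/m^p$ (coming from the operator-norm growth of $\mathcal D_k$) and $1/k^{\alpha d}$ (the fiber-shrinking cost); there is no Cauchy--Schwarz or convexity step involved.
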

To prove this result, our main tool is expected: we prove the regularity of the disintegration of $\mu$ with respect to $\pi$ (Theorem \ref{theo:regularity}). Such results appeared in the work of Galatolo and Pacifico \cite{galatolo2010lorenz} (Appendix A; extra difficulty in the proof there seems to be caused by the way disintegration is set up, making it necessary to deal with non-probability measures) and in the recent works of Butterley and Melbourne \cite{butterley2017disintegration} (Proposition 6, to compare with Theorem \ref{theo:regularity}) and of Araujo, Galatolo and Pacifico \cite{araujo2014decay} (Theorem A). Compared to these work, we gain in generality: we can consider very general maps while they tend to restrict to uniformly expanding maps, we consider an arbitrary $S$-invariant measure instead of restricting to the absolutely continuous one. Items \ref{enumi:decay2} and \ref{enumi:decay3} have no equivalent that I know of in the literature.

%%%%%%%%%%%%%%%%%%%%%%%%%%%%%%%%%%%%%%%%%%%%%%%%%%%%%%%%%%%%%%%
\subsection{A few examples}\label{sec:corollaries}

A commonly studied situation where our framework applies readily is that of skew-products, where $X=Y\times \Phi$ for some compact metric space $\Phi$ and
\[ T : x=(y,\phi) \mapsto (S(y),R(y,\phi)).\]
The fact that $T$ shrinks fibers then translates into
$d(R^n(y,\phi),R^n(y,\phi'))\le a_n$ for all $n\in\mathbb{N}$ and all $\phi,\phi'\in\Phi$ where $a_n\to 0$, $R^1=R$ and $R^{n+1}(y,\phi) = R(S(y),R^n(y,\phi))$. The projection map is then $\pi:(y,\phi)\to y$ and all needed hypotheses on $\pi$ in Theorem \ref{theo:mainLift}-\ref{theo:mainstat} are easy to check, endowing for example $X$ with the metric $d((y,\phi),(y',\phi'))=\sqrt{d(y,y')^2+d(\phi,\phi')^2}$. Note that since we will apply our above results, in many cases we will assume $T$ (and thus $S$ and $R$) to be Lipschitz; and in all cases our ``shrinking fiber'' hypothesis implies that $R(y,\phi)$ depends continuously on the variable $\phi$ when $y$ is fixed.
\begin{remark}
One can easily generalize this setting to fiber bundles: $X$ is then no longer a product, but there is a compact metric space $\Phi$ and a \emph{fibered atlas} $(U_i,h_i)_{i\in I}$, i.e. the $U_i$ form an open cover of $Y$ and the $h_i$ are homeomorphisms from $U_i\times \Phi$ to $\pi^{-1}(U_i)$ such that $h_i(y\times\Phi)=\pi^{-1}(y)$. The simplest example of a fiber bundle that is not a product is the M\"obius band, together with the usual projection on the circle. In this setting, $T$ is asked to send fibers into fibers and is \emph{locally} of the form $(y,\phi) \mapsto (S(y),R(y,\phi))$ (where the charts $h_i$ are used to identify $\pi^{-1}(U_i)$ with a product). Our main results are stated in an even more general framework, and in order to aim for simplicity we shall restrict the examples to product spaces, but fiber bundles seem unjustly under-represented in the dynamical literature.
\end{remark}

We shall consider examples spanning all the following weaknesses of the system to be considered:  non-uniform expansion in the ``horizontal'' direction, slow shrinking in the ``vertical'' direction, or low-regularity of potentials (and observables).

Let us recall the classical benchmark for non-uniformly expanding maps, the Pomeau-Manneville family defined on the circle $\mathbb{T}=\mathbb{R}/\mathbb{Z}$ (identified with $[0,1)$) by
\begin{align*}
S_q : \mathbb{T} &\to \mathbb{T} \\
  y &\mapsto \begin{cases} 
    \big(1+(2y)^q \big)y &\text{if }y\in [0,\frac12] \\ 
    2y-1 &\text{if } y\in [\frac12, 1)
             \end{cases}
\end{align*}
where $q\ge 0$ (when $q=0$ we get the doubling map, which is \emph{uniformly} expanding).
Let $\Phi$ be a compact metric space endowed with a reference (finite, positive) measure $\lambda_\Phi$, denote by $\lambda_\mathbb{T}$ the Lebesgue measure on the circle, and endow $X=\mathbb{T}\times\Phi$ with the reference measure $\lambda_X := \lambda_\Phi\times \lambda_{\mathbb{T}}$.
% The map $\pi$ is then Lipschitz, non-singular with respect to $\lambda_X$ and $\lambda_{\mathbb{T}}$, it has a Lipschitz section $\sigma:y\mapsto (y,\phi_0)$ (where $\phi_0$ is arbitrary), and it obviously induces a Lipschitz-continuous fibration.
\begin{corollary}\label{coro:mainPM}
Assume $T:\mathbb{T}\times \Phi\to \mathbb{T}\times \Phi$ is a continuous skew product with base map $S_q$ for some $q\in(0,1)$ and with shrinking fibers. 
\begin{enumerate}
\item\label{enumi:PM2} $T$ admits a unique physical measure $\mu_T$.
\item Assume further that $q<\frac12$, that fibers are exponentially shrinking and that $T$ is Lipschitz. Then $\mu_T$ satisfies the ASIP for all H\"older-continuous observables.
\item\label{enumi:PM1} If $T$ is $L$-Lipschitz and fibers are exponentially shrinking with ratio $\theta$ and if $q':=q(1- \log L/\log\theta)$ is less than $1$, then for all $\alpha\in(q',1]$ the map $T$ satisfies $\UE(\hol{\alpha};\mathrm{CLT},\hol{*})$; i.e. each $\alpha$-H\"older potential $\varphi$ has a unique equilibrium state $\mu_\varphi$, for which H\"older-continuous observables satisfy the Central Limit Theorem. Moreover H\"older observables have exponentially decaying correlations with respect to $\mu_\varphi$.
\end{enumerate}
\end{corollary}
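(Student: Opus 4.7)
\emph{Proof plan.}
I would treat the three items in turn; in each case the corollary reduces to a classical fact about $S_q$ combined with one of the main theorems of the paper. The projection $\pi:(y,\phi)\mapsto y$ is $1$-Lipschitz (so $\beta=1$), admits the Lipschitz section $y\mapsto(y,\phi_0)$ for any chosen $\phi_0\in\Phi$, and is non-singular with respect to the reference measures $\lambda_X=\lambda_\mathbb{T}\times\lambda_\Phi$ and $\lambda_Y=\lambda_\mathbb{T}$ (since $\pi_*\lambda_X$ is a positive multiple of $\lambda_\mathbb{T}$). The map $S_q$ is continuous and Lipschitz on $\mathbb{T}$ and, for $q\in(0,1)$, admits a unique absolutely continuous invariant probability measure $\check\mu_q$; by classical results (Young towers, Liverani--Saussol--Vaienti) $\check\mu_q$ is the unique physical measure of $S_q$. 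Item~\ref{enumi:PM2} is then immediate: the hypotheses of Theorem~\ref{theo:mainLift}(\ref{enumi:theomainlift2}) are satisfied, hence $\check\mu_q$ has a unique $T$-invariant lift $\mu_T$ which is also the unique physical measure of $T$.

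For the ASIP item, the Melbourne--Nicol theorem provides the ASIP for H\"older observables of $(S_q,\check\mu_q)$ whenever $q<1/2$ (the decay of correlations then being summable). To transfer this ASIP to $\mu_T$ I would extract the coboundary construction underlying the proof of Theorem~\ref{theo:mainstat}: for $f\in\Hol_\alpha(X)$, combining the Lipschitz section with the exponential shrinking of fibers produces $\tilde f\in\Hol_{\alpha'}(Y)$ and a bounded $g:X\to\mathbb{R}$ such that $f=\tilde f\circ\pi+g-g\circ T$. The $T$-Birkhoff sums of $f$ under $\mu_T$ then differ from the $S$-Birkhoff sums of $\tilde f$ under $\check\mu_q$ by the uniformly bounded telescoping term $g-g\circ T^n$, so the ASIP for $\tilde f$ transfers directly to $f$. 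I expect this to be the main obstacle, because Theorem~\ref{theo:mainstat} is phrased for equilibrium states of H\"older potentials of prescribed high regularity, whereas here $\check\mu_q$ is the equilibrium state of the H\"older-$q$ geometric potential $-\log|S_q'|$, outside the stated regime: one must verify that the coboundary machinery---properly a statement about the pair $(\check\mu_q,\mu_T)$ alone---applies in this unequal-regularity setting.

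For item~\ref{enumi:PM1} I would apply Theorem~\ref{theo:mainstat}(\ref{enumi:mainstat1}) with $\beta=1$, so that $\gamma=\alpha/(1-\log L/\log\theta)$; the assumption $\alpha>q'$ rewrites as $\gamma>q$. For every $\gamma\in(q,1]$ and every $\gamma$-H\"older potential of sufficiently small norm, the transfer operator of $S_q$ is classically known to have a spectral gap on $\Hol_\gamma(\mathbb{T})$ (such a small H\"older perturbation of $0$ pushes the equilibrium state away from the neutral fixed point, removing the intermittent behaviour), so $S_q$ satisfies $\UE(\hol{\gamma}[\rho];\mathrm{CLT},\hol{*})$ for some $\rho>0$. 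Theorem~\ref{theo:mainstat}(\ref{enumi:mainstat1}) then yields $\UE(\hol{\alpha}[C\rho];\mathrm{CLT},\hol{*})$ for $T$, and the exponential decay of correlations of $\mu_\varphi$ against H\"older observables follows from Theorem~\ref{theo:maindecay}(\ref{enumi:decay1}) applied with the same spectral gap.
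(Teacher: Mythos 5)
For items (i) and (ii) your route matches the paper's. In (i) you invoke LSV/Young towers where the paper cites Thaler for ergodicity and full support of the acip, but the conclusion and the appeal to Theorem~\ref{theo:mainLift}(\ref{enumi:theomainlift2}) are identical. In (ii) your worry about ``unequal regularity'' is correctly self-resolved: the coboundary construction (which the paper accesses via Corollary~\ref{coro:coboundary} and Proposition~\ref{prop:statistical}) is a statement purely about the observable $f$ and the pair $(\mu_T,\check\mu_q)$; it never refers to the potential that generated $\check\mu_q$, so the low regularity of the geometric potential is irrelevant.

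Item (iii) contains a genuine gap. You feed into Theorem~\ref{theo:mainstat}(\ref{enumi:mainstat1}) only a small-norm input $\UE(\hol{\gamma}[\rho];\mathrm{CLT},\hol{*})$ for $S_q$, and therefore can only output $\UE(\hol{\alpha}[C\rho];\mathrm{CLT},\hol{*})$ for $T$ with $C\rho<\infty$. The corollary asserts $\UE(\hol{\alpha};\mathrm{CLT},\hol{*})$, i.e. $\rho=\infty$: \emph{every} $\alpha$-H\"older potential has a unique equilibrium state with CLT. The paper obtains this by citing a specific result (Theorem~A of \cite{kloeckner2017optimal}) that $S_q$ satisfies $\UE(\hol{\gamma};\mathrm{CLT},\hol{*})$ for all $\gamma>q$, with no bound on the potential norm, together with the spectral gap of the associated transfer operators. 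Your heuristic --- that a small H\"older perturbation of the zero potential ``pushes the equilibrium state away from the neutral fixed point'' --- is not only insufficient (being perturbative, hence only small-norm) but also misidentifies the mechanism: for Pomeau--Manneville maps the obstruction to uniqueness and spectral gap comes from potentials that mimic the geometric potential $-\log S_q'$, which is only $q$-H\"older near $0$; once one requires regularity $\gamma>q$, no $\gamma$-H\"older potential, regardless of its norm, can reproduce that singular behaviour, and one gets uniqueness and spectral gap unconditionally. That non-perturbative input is what the cited Theorem~A supplies; without it, your argument establishes a strictly weaker statement than Corollary~\ref{coro:mainPM}(\ref{enumi:PM1}).
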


Let us say that a map $S:Y\to Y$ is \emph{uniformly expanding} when it is a self-covering map of degree $k$, and there is some $\theta\in (0,1)$ such that for each $y,y'\in Y$, denoting by $z_1,\dots,z_k$ and $z'_1,\dots, z'_k$ the inverse images of $y$ and $y'$, there is a permutation $\sigma$ such that $d(z_i,z'_{\sigma(i)})\le \theta d(y,y')$ for all $i$. We assume here for simplicity that $Y$ is a manifold endowed with a volume form yielding a reference measure $\lambda_Y$ and that $X=Y\times \Phi$ is again endowed with a product measure $\lambda_X=\lambda_Y\times\lambda_\Phi$.
\begin{corollary}\label{coro:mainpoly}
Assume that $T$ is a Lipschitz skew-product with base map $S$ a uniformly expanding map and with polynomially shrinking fibers of degree $d>2$, and let $\alpha\in(2/d,1]$. Then for all $\alpha$-H\"older potential $\varphi$:
\begin{enumerate}
\item $T$ has a unique equilibrium state $\mu_\varphi$,
\item if $\alpha>\frac{5}{2d}$, then $T$ satisfies $\UE(\hol{\alpha};\mathrm{CLT},\hol{\alpha-\frac1d})$: we have the Central Limit Theorem for $\mu_\varphi$ and for all $(\alpha-\frac1d)$-H\"older observables.
\item for all $\gamma\in [\alpha-1/d,1]$, all $\gamma$-H\"older observable have polynomial decay of correlations of degree $\frac{\alpha d}{2}-1$ with respect to $\mu_\varphi$,
\end{enumerate}
\end{corollary}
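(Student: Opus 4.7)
The approach is to invoke Theorems~\ref{theo:mainstat} and~\ref{theo:maindecay} directly, after checking the geometric hypotheses and inputting the thermodynamic formalism for the uniformly expanding base map $S$. For the skew-product $X = Y \times \Phi$, the projection $\pi\colon (y,\phi)\mapsto y$ is $1$-Lipschitz (and so $1$-H\"older) and admits the Lipschitz section $y\mapsto (y,\phi_0)$ for any fixed $\phi_0\in\Phi$; the shrinking-fiber and Lipschitz hypotheses on $T$ are given.

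For items 1 and 2, I would apply Theorem~\ref{theo:mainstat} item~\ref{enumi:mainstat2} with $\gamma := \alpha - 1/d$, which gives $\alpha'=\alpha d - 1$ and $\gamma'=\alpha d - 2$. The constraint $\gamma \in (1/d,1]$ requires exactly $\alpha>2/d$, which is assumed; it remains to verify $\UE(\holl{\alpha d-1};\varnothing)$ for item~1 and $\UE(\holl{\alpha d-1};\mathrm{CLT},\holl{\alpha d-2})$ for item~2. Both follow from classical Ruelle--Perron--Frobenius theory: on a uniformly expanding map, the variation of a log-H\"older potential of exponent $>1$ is summable along inverse branches, giving a spectral gap of the transfer operator on the corresponding space and hence a unique equilibrium state; here $\alpha d-1>1$ thanks to $\alpha>2/d$. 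The CLT for log-H\"older observables then follows from the standard Nagaev--Guivarc'h perturbation of the transfer operator once the observable exponent exceeds $1/2$, and the inequality $\alpha d-2>1/2$ is precisely the threshold $\alpha>5/(2d)$ of item~2.

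For item~3, I would apply Theorem~\ref{theo:maindecay} item~\ref{enumi:decay2} with the theorem's $\alpha$ set to the observable exponent $\gamma$. The transfer operator $\chop{L}$ of $(S,\check\mu)$ corresponds to a uniformly expanding map endowed with a $\holl{\alpha d-1}$-regular potential, namely the pushed-down potential obtained from $\varphi$ via the coboundary reduction behind Theorem~\ref{theo:mainstat}. The bound $\Hol_\gamma(\chop{L}^n h)\lesssim \Hol_\gamma(h)$ is the standard H\"older-invariance of the uniformly expanding transfer operator, and the polynomial bound $\lVert\chop{L}^n f\rVert_\infty\lesssim n^{-(\alpha d-2)}\Hol_{\gamma d\log}(f)$ for mean-zero $f$ comes from the low-regularity spectral theory applied to the log-H\"older potential, so one may take $p=\alpha d-2$. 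The theorem then gives the decay rate $\min(\gamma d,p/2)$; since $\gamma\ge \alpha-1/d$ forces $\gamma d\ge \alpha d-1\ge (\alpha d-2)/2$, this minimum equals $\frac{\alpha d}{2}-1$, as asserted.

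The hard part is not the application of the abstract theorems but supplying the thermodynamic input for $S$: one needs uniqueness, limit theorems, and sharp decay rates for log-H\"older (rather than just H\"older) potentials and observables on uniformly expanding maps, with careful bookkeeping of exponents. These ingredients are classical in spirit but live slightly outside the most commonly cited statements, and matching the polynomial-decay exponent $p=\alpha d-2$ to the asserted rate $\frac{\alpha d}{2}-1$ relies on a precise low-regularity transfer-operator argument.
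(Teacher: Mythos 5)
Your overall strategy matches the paper's exactly: reduce to Theorem~\ref{theo:mainstat} item~\ref{enumi:mainstat2} (with $\gamma=\alpha-1/d$, giving $\alpha'=\alpha d-1$, $\gamma'=\alpha d-2$) for items 1 and 2, and to Theorem~\ref{theo:maindecay} item~\ref{enumi:decay2} (with observable exponent $\gamma=\alpha-1/d$ and $p=\alpha d-2$) for item 3, after checking the routine skew-product hypotheses. The bookkeeping of exponents is correct, including the observation that $\min(\gamma d,p/2)=\frac{\alpha d}{2}-1$. The problem is precisely where you identify the ``hard part'': the thermodynamic input you supply for the base map $S$ is wrong in its mechanism, even if it lands on the right thresholds.

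The potential $\check\varphi$ pushed down to $Y$ is only $\holl{\alpha d-1}$-continuous, i.e.\ log-H\"older, not H\"older. For such a potential on a uniformly expanding map, the normalized transfer operator does \emph{not} have a spectral gap; it only decays polynomially, of degree $\alpha'-1=\alpha d-2$, in the uniform norm on log-H\"older observables (this is the content of Theorem~E of~\cite{kloeckner2017optimal}, which the paper cites and which is the entire point of the log-H\"older machinery in this corollary). Your repeated invocation of a ``spectral gap of the transfer operator'' is therefore incorrect, and your justification of the CLT via ``the standard Nagaev--Guivarc'h perturbation'' does not work: the spectral method requires the unperturbed operator to have an isolated dominant eigenvalue, which fails here. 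The threshold $\alpha'>3/2$ (equivalently $\alpha>5/(2d)$) in the paper comes from the polynomial-decay CLT of Tyran-Kami\'nska~\cite{T-K05}, not from a spectral gap plus the observable exponent exceeding $1/2$; that you recover the same numerical threshold is a coincidence of arithmetic, not of argument. Similarly, uniqueness of the equilibrium state on $Y$ does not come from a spectral gap but from the Ledrappier--Walters characterization of equilibrium states via fixed points of the normalized transfer operator and its dual (for systems with a one-sided generator), which the paper cites explicitly. If you were to carry out the Nagaev--Guivarc'h argument you outline, it would fail at the very first step for lack of spectral separation, so this is a genuine gap in the proof rather than a missing citation.
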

Note that the second item is not enough to obtain the third one: when $\alpha$ is only slightly above $5/2d$, we get decay of correlation of degree only slightly above $1/4$ while degree $1/2$ would be a minimum to obtain the CLT. This is a sign that Theorem \ref{theo:maindecay} might not be optimal.

Let us now consider low-regularity maps, i.e. below the $\C^{1,\alpha}$ regularity.
\begin{corollary}\label{coro:mainlog}
Assume that $N$ is a manifold admitting a uniformly expanding $\C^{1,\alpha\log}$ map $S:N\to N$ with expanding factor $\lambda$, that $D$ is a $d$-dimensional closed ball with some Riemannian metric, and that $T:M =N\times D \to M$ writes as a continuous skew-product $T(y,z)=(S(y),R(y,z))$ with shrinking fibers.

Then $T$ admits a unique physical measure $\mu_T$, and the basin of attraction of $\mu_T$ has full volume.

If $T$ is Lipschitz and has exponentially shrinking fibers (in particular, when $T$ is uniformly hyperbolic), then $\mu_T$ has a polynomial decay of correlations of degree $(\alpha-1)/2$ for $\C^{(2\alpha-2)\log}$ observables. If moreover $\alpha>3/2$, then $\mu_T$ satisfies the Central Limit Theorem for $\C^{(2\alpha-1)\log}$ observables.
\end{corollary}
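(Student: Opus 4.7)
The plan is to combine the main lifting theorems of the paper with the thermodynamic formalism for the low-regularity expanding base map $S$. Since $\pi:N\times D\to N$ is a $1$-Lipschitz projection with the obvious Lipschitz sections $y\mapsto(y,z_0)$, and the product reference measure $\lambda_X=\lambda_N\times\lambda_D$ projects to one equivalent to $\lambda_N$, all the structural hypotheses on $\pi$ required by Theorems \ref{theo:mainLift}, \ref{theo:mainstat} and \ref{theo:maindecay} are immediate.

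First I would collect the necessary information on $S$. For a $\C^{1,\alpha\log}$ uniformly expanding map, a generalized Lasota-Yorke inequality (adapted to log-H\"older moduli rather than H\"older ones) provides the existence and uniqueness of an absolutely continuous invariant measure $\check\mu$ whose basin covers a full-Lebesgue-measure subset of $N$. Tracking the loss of regularity at each iterate, one obtains that the transfer operator $\chop L$ of $(S,\check\mu)$ is uniformly bounded on $\Hol_{(2\alpha-2)\log}(N)$ and satisfies, for zero-mean observables,
\[\lVert \chop L^n f\rVert_\infty \lesssim \Hol_{(\alpha-1)\log}(f)/n^{\alpha-1}.\]
When $\alpha>3/2$, the polynomial rate $\alpha-1>1/2$ is then sufficient to yield the CLT for $\check\mu$ on $\C^{(\alpha-1)\log}$ observables via a Gordin martingale approximation.

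The conclusions for $T$ then follow by direct application of the main theorems. By Theorem \ref{theo:mainLift}, $\check\mu$ admits a unique $T$-invariant lift $\mu_T$, which is physical by item \ref{enumi:theomainlift2} of the same theorem (non-singularity of $\pi$ being obvious). To upgrade to a full-volume basin, I would note that for any $x=(y,z)$ with $y$ in the full-volume basin of $\check\mu$, the Ces\`aro averages $\nu_n^x=\frac1n\sum_{k<n}\delta_{T^k x}$ project under $\pi$ onto the empirical averages for $S$, which converge weak-$*$ to $\check\mu$; any weak-$*$ accumulation point of $(\nu_n^x)_n$ is then $T$-invariant with projection $\check\mu$ and must coincide with $\mu_T$ by the uniqueness part of Theorem \ref{theo:mainLift}. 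Polynomial decay of degree $(\alpha-1)/2$ for $\C^{(2\alpha-2)\log}$ observables then comes from Theorem \ref{theo:maindecay} item \ref{enumi:decay3} with parameters $\alpha_{\mathrm{thm}}=2\alpha-2$ and $p=\alpha-1$, yielding the rate $\min(2\alpha-2,(\alpha-1)/2)=(\alpha-1)/2$. When $\alpha>3/2$, the CLT for $\mu_T$ on $\C^{(2\alpha-1)\log}$ observables is provided by Theorem \ref{theo:mainstat} item \ref{enumi:mainstat3} applied to the null potential, with $\gamma=2\alpha-1$ and $\gamma'=\alpha-1$.

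The main obstacle is not the lifting, which is essentially mechanical once the hypotheses of the main theorems are verified, but rather the preliminary analysis of $S$: running the Lasota-Yorke machinery with logarithmic moduli of continuity requires careful bookkeeping to produce the prescribed rates, and the $\alpha>3/2$ threshold for the CLT is precisely the point at which the $\sum n^{-(\alpha-1)}$-type summability required for the martingale approximation kicks in.
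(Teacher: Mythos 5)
Your plan follows the paper's proof closely in its overall structure and in the numerology: the basin argument via uniqueness of the invariant lift and Fubini, the invocation of Theorem~\ref{theo:maindecay} item \ref{enumi:decay3} with $\alpha_{\mathrm{thm}}=2\alpha-2$, $p=\alpha-1$ (indeed $\min(2\alpha-2,(\alpha-1)/2)=(\alpha-1)/2$ when $\alpha>1$), and the CLT via the $\holl{}$-coboundary correspondence $\gamma=2\alpha-1\leadsto\gamma'=\alpha-1$ all match. Two remarks on the details.

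First, the paper does not re-derive the properties of $S$ via a Lasota--Yorke argument; it simply cites \cite{kloeckner2017optimal} (and \cite{FJ1,FJ2}) for the existence, uniqueness, ergodicity and full basin of the acip $\check\mu$, and for the fact that the transfer operator for $\alpha\log$-H\"older potentials is iteratively bounded on $\holl{}$-spaces and decays like $n^{-(\alpha-1)}$ in sup-norm for $(\alpha-1)\log$-H\"older zero-mean observables. Your plan to reprove these from scratch is reasonable but substantially longer; also, the $\alpha>3/2$ threshold in the cited source does not come from $\sum n^{-(\alpha-1)}$-summability (that would require $\alpha>2$) but from the weaker CLT criterion of \cite{T-K05}, so the heuristic you give for why $3/2$ is the cutoff is off by a factor.

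Second, and more importantly, your invocation of ``Theorem~\ref{theo:mainstat} item~\ref{enumi:mainstat3} applied to the null potential'' is not quite right: the equilibrium state of the null potential is the measure of maximal entropy, whereas $\mu_T$ here is the lift of the acip $\check\mu$ --- i.e.\ of the equilibrium state of the geometric potential $\varphi_S=-\log\det DS$ on the base. The tool you actually want is Proposition~\ref{prop:statistical} together with Corollary~\ref{coro:coboundary}~(\emph{iii}), which lift the CLT from $(S,\check\mu)$ to $(T,\mu_T)$ observable-by-observable for \emph{any} $T$-invariant $\mu$ with $\pi_*\mu=\check\mu$, with no need to exhibit $\mu_T$ as an equilibrium state. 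The paper's phrasing (``apply the last item of Theorem~\ref{theo:mainstat}'') is itself a shorthand for this, anchored on the geometric potential, not the null one; with that correction your argument is sound.
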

The regularities needed on observables are quite weak (they include in particular all H\"older observables) but the assumption that $T$ is a skew-product is very strong; it is a whole research project to consider the case when $T$ is a general $\C^{1+\alpha\log}$ uniformly hyperbolic diffeomorphism onto its image, e.g. with a ``solenoidal'' attractor: one can quotient out by the stable foliation, obtaining a skew-product over a bundle, but only up to a conjugacy as regular as the foliation. Often, this conjugacy is \emph{not} $\C^1$, and the regularity of the foliation's holonomy needs to be finely controlled to overcome this difficulty.

\begin{remark}
The skew-products of the above corollaries need not be diffeomorphisms, and can have intricate attractors: Figure \ref{fig:attractors} shows some examples with $Y=\mathbb{T}$ and $\Phi=[0,1]$, and with $S$ an expanding map of the circle.
\end{remark}

\begin{figure}[htp]
\begin{center}
\includegraphics[width=.33\linewidth]{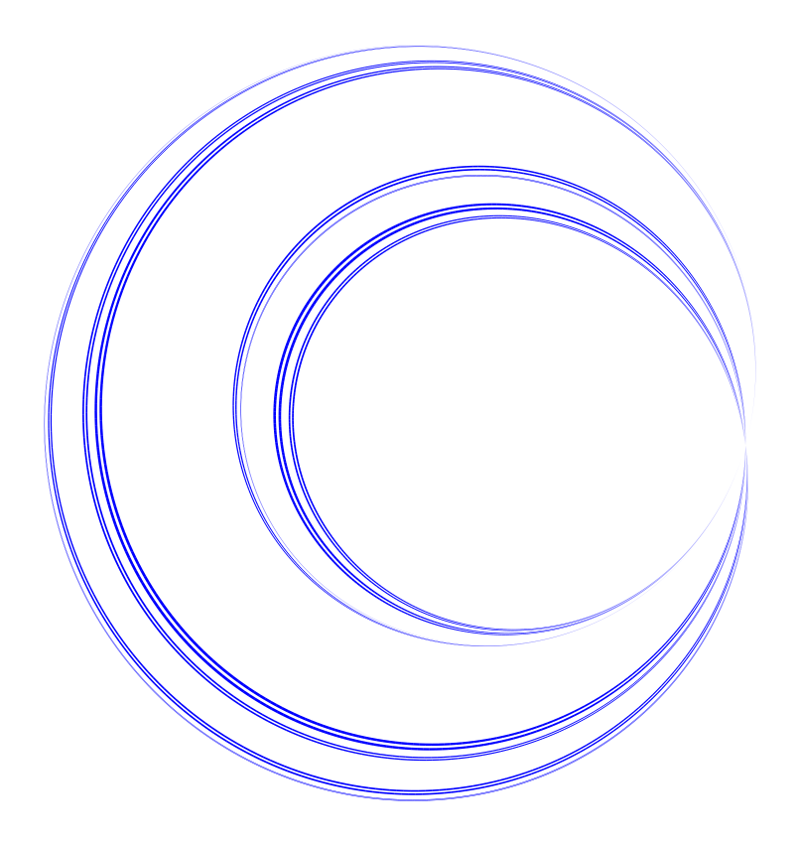}\hspace{\stretch{1}}%
\includegraphics[width=.33\linewidth]{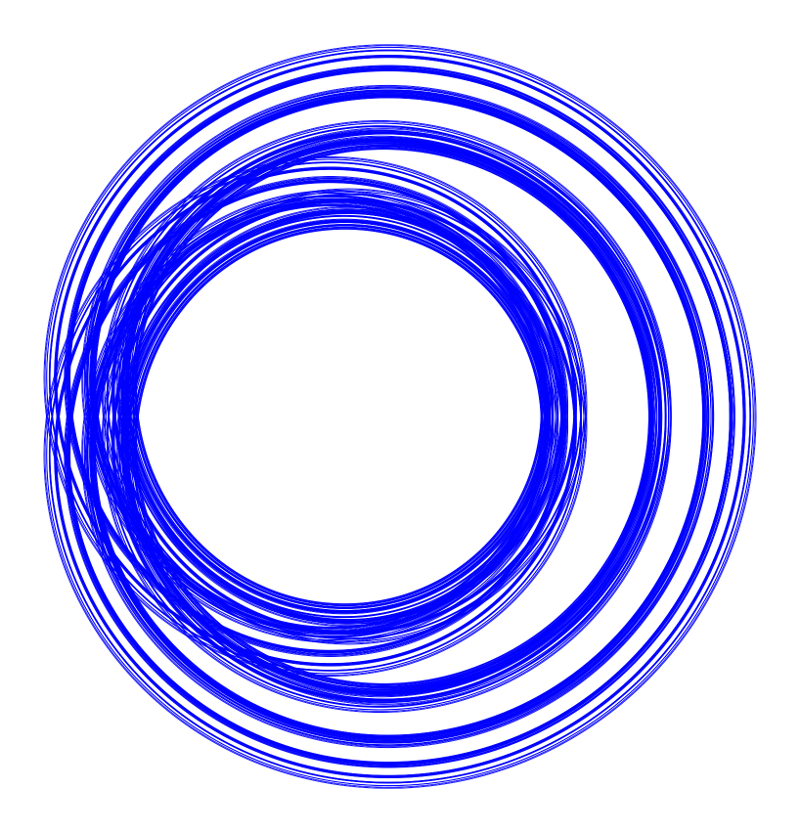}\hspace{\stretch{1}}%
\includegraphics[width=.33\linewidth]{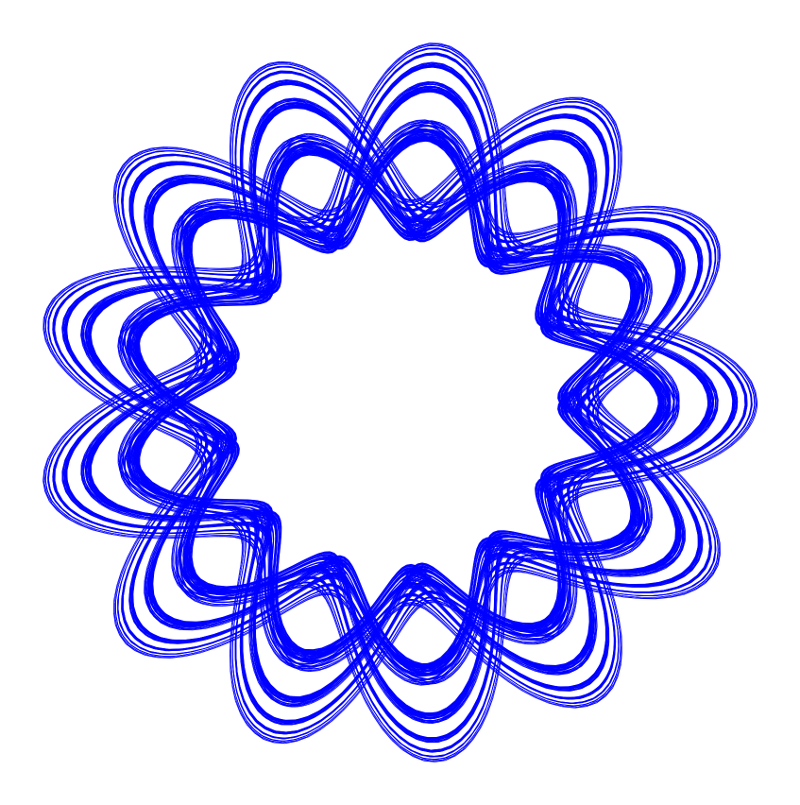}
\caption{Some attractors on the annulus: the images picture $T_i^{n_i}(U)$ with $U$ a neighborhood of the attractor, $n_1=4$, $n_2=9$ and $n_3=8$. The left one is homeomorphic to the product of a Cantor set with a circle, pinched at one fiber (to achieve this, the map $T_1$ pinches two fibers to a common point); the topology of the other two attractor is very intricate, in particular their fundamental groups seem not to be finitely generated -- we expect these attractors to be homotopic to complement of Cantor sets in the plane, which would make them homotopic one to the other; they could even be homeomorphic.}
\label{fig:attractors}
\end{center}
\end{figure}

%%%%%%%%%%%%%%%%%%%%%%%%%%%%%%%%
\subsection{Physical versus SRB}

We close this gallery of examples by stressing the difference between physical measures and SRB measures, where we use the distinction advocated by Young \cite{young2002srb} (see section \ref{sec:def-physicality} and in particular Definition \ref{defi:SRB}).

Theorem \ref{theo:mainLift} combines with a result of Campbell and Quas \cite{campbell2001generic} (where they say ``SRB'' for ``physical'') to yield the following.
\begin{corollary}\label{coro:mainSRB}
There is a $\C^1$ diffeomorphism onto its image $T:U\to U$, where $U\subset \mathbb{R}^3$ is open and  bounded, having a compact uniformly hyperbolic attractor $\Lambda = \bigcap_{k\ge 0} T^k(U)$ with unstable dimension $1$, supporting exactly one physical measure $\mu_T$, but supporting no SRB measure.
\end{corollary}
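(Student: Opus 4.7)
The plan is to realize $T$ as a Smale-type solenoid in $\mathbb{R}^3$ whose quotient along the stable foliation is a $\C^1$ expanding circle map admitting a unique physical measure that is singular with respect to Lebesgue, and then to transport uniqueness and non-absolute-continuity through Theorem~\ref{theo:mainLift}. The input is \cite{campbell2001generic}: a $\C^1$-generic expanding map $S:\mathbb{T}\to\mathbb{T}$ admits a unique physical measure $\check\mu$ mutually singular with Lebesgue $\lambda_{\mathbb{T}}$; in particular, no absolutely continuous $S$-invariant probability measure exists. Fix such an $S$, chosen moreover $\C^1$-close to some degree-$k$ linear model $y\mapsto ky$ with $k\ge 2$ so that the classical geometric solenoid construction remains available.

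Let $U\subset\mathbb{R}^3$ be a thin solid torus, smoothly parametrized by $D^2\times\mathbb{T}$ for a small disk $D^2$. Define
\[T(z,y)=\bigl(R(z,y),S(y)\bigr),\]
where $R:D^2\times\mathbb{T}\to D^2$ is $\C^1$ and every partial map $z\mapsto R(z,y)$ is a $\theta$-contraction with $\theta\ll 1/\lVert S'\rVert_\infty$; realize this geometrically so that the $k$ branches of $T(U)$ wind inside $U$ without self-intersection. The resulting $T$ is a $\C^1$ diffeomorphism of $U$ onto its image, uniformly hyperbolic on the attractor $\Lambda=\bigcap_{n\ge 0}T^n(U)$ with one-dimensional unstable and two-dimensional stable distributions. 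The projection $\pi:(z,y)\mapsto y$ is Lipschitz, admits the Lipschitz section $y\mapsto (z_0,y)$, satisfies $\pi T=S\pi$, is non-singular with respect to product Lebesgue measures, and $T$ shrinks fibers exponentially. Hence Theorem~\ref{theo:mainLift} applies, and its item~\ref{enumi:theomainlift2} turns the unique physical measure $\check\mu$ of $S$ into the unique physical measure $\mu_T$ of $T$.

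It remains to rule out SRB measures. Suppose for contradiction that $\nu$ is one for $(T,\Lambda)$. Its disintegration $(\nu_\xi)_\xi$ along local unstable leaves $W^u_{\mathrm{loc}}(\xi)$ is absolutely continuous with respect to leaf-Lebesgue. Each leaf is a $\C^1$ curve and $\pi$ restricts to it as a $\C^1$ local diffeomorphism onto an arc of $\mathbb{T}$; consequently, for every $\lambda_{\mathbb{T}}$-null Borel set $A\subset\mathbb{T}$, the intersection $\pi^{-1}(A)\cap W^u_{\mathrm{loc}}(\xi)$ is Lebesgue-null in the leaf, hence $\nu_\xi$-null. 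Integrating over the transversal factor of the disintegration yields
\[\pi_*\nu(A)=\int\nu_\xi(\pi^{-1}(A))\,\dd\hat\nu(\xi)=0,\]
so $\pi_*\nu\ll\lambda_{\mathbb{T}}$. But $\pi_*\nu$ is then an $S$-invariant absolutely continuous probability measure, contradicting our choice of $S$. Hence $T$ carries no SRB measure.

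The main point requiring care is the geometric step: producing a genuinely $\C^1$ solenoidal embedding in $\mathbb{R}^3$ based on the merely $\C^1$ map $S$, with unstable dimension exactly $1$. This stays within the classical Smale construction as long as $S$ is $\C^1$-close to the linear model, which is precisely where \cite{campbell2001generic} operates; no new difficulty arises. Everything else reduces to Theorem~\ref{theo:mainLift} combined with the fact that a $\C^1$ local diffeomorphism pushes absolutely continuous measures to absolutely continuous measures.
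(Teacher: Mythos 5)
Your proof is correct and follows essentially the same strategy as the paper: construct a $\C^1$ solenoidal skew product $T=(S,R)$ over a Campbell--Quas generic expanding circle map $S$, lift the unique physical measure via Theorem~\ref{theo:mainLift}, and rule out SRB measures by showing that any such measure would push forward under $\pi$ to an absolutely continuous $S$-invariant measure, which does not exist. The only cosmetic difference is in the SRB-exclusion step: the paper states a small lemma and integrates explicit leaf densities $f^i_L$, while you argue directly that $\pi$ restricted to a local unstable leaf is a $\C^1$ local diffeomorphism onto an arc of $\mathbb{T}$, so $\lambda_{\mathbb{T}}$-null sets pull back to leaf-Lebesgue-null sets and hence to $\nu_\xi$-null sets; these are the same observation phrased differently, and both are fine.
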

The proof, detailed in Section \ref{sec:proofs}, can be summed up easily as follows. We construct $T$ as a uniformly hyperbolic skew product $(S,R)$ on $\mathbb{T}\times D^2$; \cite{campbell2001generic} shows that taking $S$ a generic $\C^1$ expanding map, it has a unique physical measure $\check\mu$, but no absolutely continuous measure. The lift $\mu$ of $\check\mu$ is a physical measure of $T$ with full basin of attraction, but an SRB measure would project to an absolutely continuous measure of $S$ and thus does not exist. In other word, \cite{campbell2001generic} already provides many examples of the kind above, but somewhat degenerate as the stable dimension vanishes; the present work only serves to add some stable dimensions.

\begin{remark}\label{rema:hyperbolic}
When $T:M\to M$ is a $C^2$ diffeomorphism with an hyperbolic attractor $\Lambda = \cap_{n_in\mathbb{N}} T^n(U)$ where $U$ is an open set with $T(\bar U)\subset U$, and when in addition there exist a compact $\C^1$ submanifold $Y\in U$ transversal to the stable foliation and intersecting each stable leaf at exactly one point, one can identify $Y$ with the space of leaves and gets a factor $S:Y\to Y$ of $T$ with a nice section $Y\hookrightarrow U$ (for example, if the unstable dimension is $1$ and stable leaves on $U$ are relatively compact, such a $Y$ is easily constructed). One can easily endow $Y$ with a (non-necessarily Riemannian) metric that makes $S$ expanding. On the one hand, it follows from Theorem 5.8 in \cite{kloeckner2017optimal} that $S$ satisfies $\UE(\hol{*};\mathrm{CLT}, \hol{*})$; on the other hand the geometric potential $\varphi_T=-\log J^uT$ (where $J^u$ is the determinant of the restriction of $DT$ to the stable distribution) is $\C^1$, in particular H\"older, and one can thus use the present results to construct a H\"older potential $\hat{\varphi}_T$ cohomologuous to $\varphi_T$ and constant on stable leaves. This potential descends on $Y$ into a H\"older potential, for which $S$ has a unique equilibrium state. It follows that $T$ has a unique equilibrium state $\mu_T$ for the geometric potential, and that $\mu_T$ has the expected statistical properties (CLT, exponential decay of correlations). By the work of Ledrappier \cite{ledrappier1984proprietes}, we know that such an equilibrium state is a SRB measure. Moreover, it is a physical measure thanks to the absolute continuity of holonomy of the stable foliation (see e.g. \cite{abdenur2009flavors}). Without relying on Markov partitions, we recover in this case the classical result that there is a measure $\mu_T$ that is the unique physical measure, the unique SRB measure and the unique equilibrium state of the geometric potential; and that $\mu_T$ has good statistical properties.
\end{remark}

%%%%%%%%%%%%%%%%%%%%%%%%%%%%%%%%%%%%%%%%%%%%%%%%%%%%%%%%%%%%%%%
\subsection{Beyond Lipschitz maps with uniformly shrinking fibers}

The present work can be developed in several directions; for example one could apply similar ideas for flows. Section \ref{sec:lift} only uses a averaged shrinking, and could thus be applied to the examples introduced by Di\'{\i}az, Horita, Rios and Sambarino \cite{diaz2009destroying} and further studied by Leplaideur, Oliveira and Rios \cite{leplaideur2011equilibrium} and Ramos and Siqueira \cite{ramos2017equilibrium}. These examples, which are at the frontier between hyperbolic and robustly non-hyperbolic dynamic, are indeed extensions of uniformly expanding maps on a Cantor subset of $\mathbb{R}$, with only some exceptional fibers not being contracted. The ideas of the other sections might be applicable to such examples as well.

As mentioned above, in some interesting cases the map $S$ is not continuous, see e.g. \cite{galatolo2018quantitative}, \cite{galatolo2018decay}. We expect most of the ideas used to prove Theorems \ref{theo:mainSG}-\ref{theo:maindecay} to be adaptable to such a setting, up to devising suitable functional spaces to work on (we do not claim that such an adjustment should always be straightforward); this should make it possible to consider more general invariant measures than the lift of the absolutely continuous $S$-invariant measure. In particular, using disintegration with respect to $\pi$ in its full generality should be useful. 

Note also that the ideas presented here can be used without assuming compact fibers, if one has \emph{contraction} properties instead of shrinking (e.g. $d(Tx,Tx') \le \lambda d(x,x')$ for some $\lambda\in(0,1)$ and all $x,x'$ in the same fiber, or milder contraction using decay functions as in \cite{kloeckner2017optimal}). One would need to assume some moment condition on the lifted measures $\mu$, in order to use a Wasserstein distance (and its modification $\wass^{\check\mu}$). In the examples I could think of, it would be possible to actually restrict to a stable compact subset of the space though, so that we do not pursue this direction.

%%%%%%%%%%%%%%%%%%%%%%%%%%%%%%%%%%%%%%%%%%%%%%%%%%%%%%%%%%%%%%%
\subsection{Organization of the article}

In Section \ref{sec:prelim}, we introduce a number of tools and definitions, many being very classical. Given the variety of properties considered in our main theorem, this section is rather long for a preliminary one. Each section after that starts by pointing to the subsections \ref{sec:prelim}.$\ast$ that are used, so that Section \ref{sec:prelim} can be mostly skipped and used as reference.

In Section \ref{sec:lift} we prove existence, uniqueness of the $T$-invariant lift of an $S$-invariant measure and study convergences to it under iteration of $T$ (this covers the first part of Theorem \ref{theo:mainLift}, and Theorem \ref{theo:mainSG}). Section \ref{sec:properties} ends the proof of Theorem \ref{theo:mainLift} by considering each preserved property. In this part we consider the more general case of fibers shrunk on average with respect to an $S$-invariant measure, while all the following sections assume that fibers are all (uniformly) shrinking.

In Section \ref{sec:equilibrium}, we consider equilibrium states and establish a correspondence between potentials and observables on $X$ and on $Y$, by adding coboundaries and using the projection. Theorem \ref{theo:mainstat} is in particular proved.

Section \ref{sec:decay} is devoted to the decay of correlations, and proves Theorem \ref{theo:maindecay}. To this end, we use another correspondence between observables on $X$ and on $Y$, using disintegration; we prove that disintegration preserve some regularity properties of observables (Theorem \ref{theo:regularity}).

Last, in Section \ref{sec:proofs} we explain how to deduce Corollaries \ref{coro:mainPM}-\ref{coro:mainSRB} from the main theorems and the literature.

\paragraph*{Acknowledgement.}
I warmly thank Stefano Galatolo for many interesting comments on a preliminary version of this work.

%%%%%%%%%%%%%%%%%%%%%%%%%%%%%%%%%%%%%%%%%%%%%%%%%%%%%%%%%%%%%%%
%%%%%%%%%%%%%%%%%%%%%%%%%%%%%%%%%%%%%%%%%%%%%%%%%%%%%%%%%%%%%%%
\section{Preliminaries}\label{sec:prelim}

This Section sets up notation and states a few results we shall use.

Let $X$ be a compact metric space and $T:X\to X$ be a map (all maps are assumed to be Borel-measurable), admitting a factor $S:Y\to Y$, i.e. $Y$ is a compact metric space, $S$ is a map, and there is a continuous onto map $\pi:X\to Y$ such that $S \pi = \pi T$ (we denote composition of maps either by juxtaposition of using the usual symbol $\circ$). The sets $\pi^{-1}(y)$ are called \emph{fibers}.
%, and the fiber of a point $x\in X$ is denoted by $[x] := \{x'\in X \mid \pi(x) = \pi(x')\}$
We denote by $d(\cdot,\cdot)$ both metrics on $X$ and on $Y$, the context preventing any ambiguity.
Note that to state the more general results, we do not ask $S,T$ to be continuous unless specified; on the contrary the continuity of $\pi$ is crucial in many arguments.

We denote by $\alg{B}_X$, $\alg{B}_Y$ the Borel $\sigma$-algebras of $X$ and $Y$,with respect to which all measurability conditions are considered unless otherwise specified. Let $\proba(X)$, $\proba(Y)$ be the sets of probability measures of $X$, $Y$ and $\proba_T(X)$ the set of $T$-invariant probability measures (similarly $\proba_S(Y)$ is the set of $S$-invariant probability measures). We denote either by $\int f\dd\mu$, $\int f(x) \dd\mu(x)$ or $\mu(f)$ the integral of a integrable or positive function $f$ with respect to a measure $\mu$.

In order to simplify a few arguments, we always assume (up to changing the metrics by a constant, thus not altering the statements of the Theorems in the introduction) that $\diam X,\diam Y\le 1$.

Constants denoted by $C$ are positive and can vary from line to line, and we write $a(n)\lesssim b(n)$ to express that for some $C>0$ and all $n\in\mathbb{N}$, $a(n) \le C b(n)$.

%%%%%%%%%%%%%%%%%%%%%%%%%%%%%%%%%%%%%%%%%%%%%%%%%%%%%%%%%%%%%%%
\subsection{Moduli of continuity}\label{sec:def-moduli}

By a \emph{modulus of continuity} we mean a continuous, increasing, concave function $\omega:[0,\infty) \to[0,\infty)$ mapping $0$ to $0$. We may only define a modulus near $0$, then the understanding is that it is extended to the half line; since we shall only be concerned with compact spaces, the specifics of the extension are irrelevant.

A function $f:X\to\mathbb{R}$ is said to have modulus of continuity $\omega$ when
\[ \lvert f(x)- f(x') \rvert \le \omega(d(x,x')) \qquad \forall x,x'\in X.\]
Every continuous function on a compact metric space is uniformly continuous, hence has a modulus of continuity: concavity of the modulus can be ensured by taking the convex hull of $\{(\eta,\varepsilon)\in [0,\infty)^2 \mid \exists x,x'\in X, d(x,x')\le \eta, \lvert f(x)-f(x')\rvert \ge \varepsilon \} $.

A function is said to be $\omega$-continuous if there is a constant $C>0$ such that it has $C\omega$ as a modulus of continuity; the infimum of all such $C$ is denoted by $\Hol_\omega(f)$, and the set of $\omega$-continuous functions $X\to\mathbb{R}$ is a Banach space (``generalised H\"older space'') when endowed with the norm
\[\lVert f\rVert_{\omega} := \lVert f\rVert_\infty + \Hol_\omega(f)\]
(this claim follows from the corresponding classical claim for the Lipschitz modulus and from the observation that $\omega(d(\cdot,\cdot))$ defines a metric). An observation that will be used without warning is that whenever $f$ has zero average with respect to an arbitrary probability measure, then it takes both non-positive and non-negative values; then $\lVert f\rVert_\infty\le \omega(\diam X)\Hol_\omega(f)$ and thus $\lVert f\rVert_\omega \lesssim \Hol_\omega(f)$.

The most classical moduli of continuity are the H\"older ones, defined for $\alpha\in(0,1]$ by $\hol{\alpha}(r) = r^\alpha$ (so that $\hol{\alpha}$-continuous means $\alpha$-H\"older). We shall have use for a family of more lenient moduli.

\begin{definition}
For each $\alpha\in(0,\infty)$ we denote by $\holl{\alpha}$ the modulus of continuity such that on $(0,1]$
\[\holl{\alpha}(r) = \frac{1}{\big(\log\frac{r_\alpha}{r}\big)^\alpha}\]
where $r_\alpha>1$ is chosen large enough to ensure monotony and concavity on $(0,1]$, and $\holl{\alpha}$ is constant for $r\ge 1$.

A $\holl{\alpha}$-continuous function is also said to be
$\alpha \log$-H\"older; a function is said to be $\log$-H\"older if it is $\alpha\log$-H\"older for some $\alpha>0$.
\end{definition}

To simplify notation, we write $\Hol_\alpha$ instead of $\Hol_{\hol{\alpha}}$ and $\Hol_{\alpha \log}$ instead of $\Hol_{\holl{\alpha}}$.

\begin{example}
When $X=\{0,1\}^{\mathbb{N}}$ with the metric $d(x,x') = 2^{-i(x,x')}$, where $i(x,x')$ is the first index where $x_i\neq x'_i$, a function $f$ is H\"older-continuous when the maximal influence of the $i$-th component decays exponentially fast, while $f$ is $\alpha\log$-H\"older when the maximal influence of the $i$-th component decays like $i^{-\alpha}$.
\end{example}

Let us show that the modulus $\holl{\alpha}$ being very concave, it is only mildly affected by pre-composition by a high-order iterate of a Lipschitz map.

\begin{proposition}\label{prop:alphalog}
For all $\alpha>0$ and all $L\ge 1$, there exists $D>0$ such that for all $n\in\mathbb{N}$ and all $r\in[0,1]$:
\[\holl{\alpha}(L^n r)\le D  n^\alpha \holl{\alpha}(r).\]
In particular, if $T:X\to X$ and $\sigma:Y\to X$ are Lipschitz and $f \in\Hol_{\alpha\log}(X)$, then 
$\Hol_{\alpha\log}(fT^n\sigma) \lesssim n^\alpha \Hol_{\alpha\log}(f)$.
\end{proposition}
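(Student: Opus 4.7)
The plan is to reduce everything to a direct calculation using the explicit formula for $\holl{\alpha}$ on $(0,1]$, treating the saturation at $r\ge 1$ as a trivial separate case. First I would note that if $L^n r \ge 1$, then $\holl{\alpha}(L^n r) = \holl{\alpha}(1)$ is a fixed constant, while $\holl{\alpha}(r)\ge \holl{\alpha}(L^{-n}) = (\log r_\alpha + n\log L)^{-\alpha}$, so the ratio is bounded by a multiple of $n^\alpha$. Similarly, $L=1$ or $r=0$ are trivial.

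The core case is $L^n r \in (0,1)$. Here I would set $u := \log(r_\alpha/r)$, so that $\log(r_\alpha/(L^n r)) = u - n\log L$, and $L^n r \le 1$ gives $u - n\log L \ge \log r_\alpha > 0$. Then
\[
\frac{\holl{\alpha}(L^n r)}{\holl{\alpha}(r)}
= \left(\frac{u}{u - n\log L}\right)^{\!\alpha}
= \left(1 + \frac{n\log L}{u - n\log L}\right)^{\!\alpha}
\le \left(1 + \frac{n\log L}{\log r_\alpha}\right)^{\!\alpha},
\]
which is bounded by $(Cn)^\alpha$ for some $C$ depending only on $\alpha$ and $L$ (as soon as $n\ge 1$). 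Taking $D$ to be the maximum of the constants coming from the two cases finishes the first claim.

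For the second assertion, denote the Lipschitz constants of $T$ and $\sigma$ by $L_T$ and $L_\sigma$. Then $d(T^n\sigma(y),T^n\sigma(y'))\le L_T^n L_\sigma d(y,y') \le (L_T L_\sigma)^n d(y,y')$ for $n\ge 1$ (assuming $L_\sigma\ge 1$, which is harmless). Applying the first part with $L := L_T L_\sigma$,
\[
|f(T^n\sigma(y)) - f(T^n\sigma(y'))|
\le \Hol_{\alpha\log}(f)\, \holl{\alpha}\bigl(L^n d(y,y')\bigr)
\le D\, n^\alpha\, \Hol_{\alpha\log}(f)\, \holl{\alpha}(d(y,y')),
\]
giving the claimed bound on $\Hol_{\alpha\log}(f T^n \sigma)$.

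The only subtle point I expect is the asymmetry between the concavity of $\holl{\alpha}$ (which ensures $\holl{\alpha}(Lr)\le L\holl{\alpha}(r)$ for $L\ge 1$, a standard consequence of $\omega(0)=0$ and concavity) and the slow growth $\holl{\alpha}(L^n r)\le Dn^\alpha \holl{\alpha}(r)$ which is specifically the logarithmic nature of the modulus at work; the bound $n^\alpha$ (rather than $L^n$) reflects that scaling by $L^n$ shifts the argument of $\log(\cdot)^{-\alpha}$ only additively. No real obstacle arises beyond being careful to split off the saturated regime $L^n r \ge 1$ and to verify $u - n\log L \ge \log r_\alpha$ in the non-trivial case.
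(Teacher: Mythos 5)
Your proof is correct, and it follows the same structural approach as the paper: split into the cases $L^n r\le 1$ and $L^n r\ge 1$, and compute the ratio $\holl{\alpha}(L^n r)/\holl{\alpha}(r)$ directly from the explicit formula.

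In fact your treatment of the non-saturated case $L^n r\le 1$ is more careful than the paper's. There the paper rewrites the ratio as $\bigl(1-\tfrac{n\log L}{\log(r_\alpha/r)}\bigr)^{-\alpha}$ and asserts it is ``bounded independently of $n$''. That assertion is not correct: taking $r=L^{-n}$ gives $\log(r_\alpha/r)=\log r_\alpha+n\log L$, so the ratio equals $\bigl(1+n\log L/\log r_\alpha\bigr)^\alpha\sim n^\alpha$. Your estimate, $\tfrac{u}{u-n\log L}=1+\tfrac{n\log L}{u-n\log L}\le 1+\tfrac{n\log L}{\log r_\alpha}$ using $u-n\log L\ge\log r_\alpha$, gives exactly the right growth, which is $\lesssim n^\alpha$ for $n\ge 1$ and hence exactly what the proposition asserts — so your version repairs the paper's slip without changing the method. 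The saturated case $L^n r\ge 1$ is handled the same way as in the paper, and you spell out the derivation of the second assertion (composition with Lipschitz maps of combined constant $L_TL_\sigma$), which the paper leaves implicit after ``In particular.''
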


\begin{proof}
When $L^n r\le 1$, we have
\[\frac{\holl{\alpha}(L^n r)}{\holl{\alpha}(r)} = \Big(1-\frac{\log L^n}{\log\frac{r_\alpha}{r}}\Big)^{-\alpha}\]
which is bounded independently of $n$ since $L^n \le \frac1r < \frac{r_\alpha}{r}$; while when $L^n r\ge 1$, we have
\[\frac{\holl{\alpha}(L^n r)}{\holl{\alpha}(r)} = \frac{\holl{\alpha}(1)}{\holl{\alpha}(r)} \lesssim \big(\log\frac{r_\alpha}{r}\big)^\alpha \lesssim (n \log L)^\alpha. \]
\end{proof}

%%%%%%%%%%%%%%%%%%%%%%%%%%%%%%%%%%%%%%%%%%%%%%%%%%%%%%%%%%%%%%%
\subsection{Sections, disintegration}\label{sec:def-disintegration}

The map $\pi$ can be used to push measures forward: given $\mu\in \proba(X)$, $\pi_*\mu : A \mapsto \mu(\pi^{-1} A)$ is a probability measure on $Y$. Moreover, a $T$-invariant measure is pushed to an $S$-invariant measure: for all $f:Y\to \mathbb{R}$,
\[ \int f\circ S \dd(\pi_*\mu) = \int f\circ S\pi \dd \mu = \int f\circ \pi T \dd\mu = \int f\circ \pi \dd\mu = \int f \dd(\pi_*\mu).\]
Our first goal, in Section \ref{sec:lift}, will be to lift an invariant measure $\check\mu$ of $S$ into an invariant measure $\mu$ of $T$, where ``lifting'' entails that $\pi_*\mu=\check\mu$.

Recall a notion borrowed from the theory of fiber bundles.
\begin{definition}
A \emph{section}  of $\pi$ is a measurable map $\sigma : Y\to X$ such that $\pi\sigma = \Id_Y$.
\end{definition}
In other words, $\sigma(y)\in\pi^{-1}(y)$ for all $y\in Y$, i.e. $\sigma$ picks a point in the fiber of its argument. The map $\sigma\pi: X\to X$ then sends each point to the point in its own fiber picked by $\sigma$. Note that there is no assumption relating the section with the dynamics. 
Asking $\sigma$ to be measurable is very mild, and in many cases we will ask it to be continuous, or even Lipschitz.
\begin{proposition}[Measurable Selection Theorem \cite{kuratowski1965general}]
There exist a section $\sigma:Y\to X$. As a consequence, $\pi_*$ is onto $\proba(Y)$.
\end{proposition}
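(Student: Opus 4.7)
Plan. The statement has two parts: existence of a measurable section, and surjectivity of $\pi_*$. The second follows trivially from the first by pushing $\check\mu$ forward through a section, so the real content is the measurable selection for the fiber-valued multifunction $\Gamma : y \mapsto \pi^{-1}(y)$. The plan is to verify the hypotheses of the classical Kuratowski--Ryll-Nardzewski selection theorem and invoke it directly; the pushforward deduction is then a one-line calculation.

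First I would check that $\Gamma$ takes values in non-empty compact subsets of $X$: non-emptiness is given by $\pi$ being onto, and closedness (hence compactness, as $X$ is compact) follows from $\pi^{-1}(\{y\})$ being the preimage of a closed singleton under the continuous map $\pi$. Next I would verify the weak measurability condition: for every open $U\subset X$,
\[\{y\in Y : \Gamma(y)\cap U\neq\varnothing\} = \pi(U).\]
Since $X$ is compact metric, the open set $U$ is $F_\sigma$, say $U=\bigcup_n K_n$ with $K_n$ compact; continuity of $\pi$ then makes each $\pi(K_n)$ compact in $Y$, so $\pi(U)=\bigcup_n \pi(K_n)$ is $F_\sigma$, hence Borel. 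This is exactly the hypothesis needed, and the selection theorem produces a Borel-measurable $\sigma:Y\to X$ with $\sigma(y)\in\pi^{-1}(y)$, i.e. $\pi\sigma=\Id_Y$.

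For the onto claim, given $\check\mu\in\proba(Y)$, I would set $\mu:=\sigma_*\check\mu\in\proba(X)$; then for every Borel $A\subset Y$,
\[\pi_*\mu(A)=\mu(\pi^{-1}A)=\check\mu\bigl(\sigma^{-1}\pi^{-1}A\bigr)=\check\mu\bigl((\pi\sigma)^{-1}A\bigr)=\check\mu(A),\]
which proves $\pi_*\mu=\check\mu$ and surjectivity. The only genuine ingredient is the selection theorem; its hypotheses are immediate in the compact metric setting, so there is no real obstacle — the entire difficulty of lifting measures (uniqueness, invariance, regularity) is deferred to the subsequent sections, where the trivial lift $\sigma_*\check\mu$ will typically fail to be $T$-invariant and one must instead take weak-$*$ limits of its iterates.
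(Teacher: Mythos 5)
Your proof is correct and follows essentially the same route as the paper, which simply cites the Kuratowski--Ryll-Nardzewski theorem for the existence of $\sigma$ and gives the one-line pushforward identity $\pi_*(\sigma_*\nu)=\nu$ for surjectivity. You add a careful verification of the selection theorem's hypotheses (nonempty compact values of $y\mapsto\pi^{-1}(y)$ and weak measurability via the $F_\sigma$ structure of open sets in a compact metric space), which the paper leaves implicit, but the argument is the same.
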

(That $\pi_*$ is onto follows from the observation that for all $\nu\in\proba(Y)$, $\pi_*(\sigma_*\nu) = \nu$.)

We shall use in a central way the disintegration along a map. We state here the Disintegration Theorem for $\pi$, but it only needs measurability and can be used with other maps, such as $S$.
\begin{proposition}[Disintegration Theorem \cite{rohlin1952fundamental}, \cite{simmons2012conditional}]\label{prop:disintegration}
Let $\mu\in\proba(X)$ and $\check\mu=\pi_*\mu \in\proba(Y)$ be (non necessarily invariant) probability measures. There exist a family $(\xi_y)_{y\in Y}$ of probability measures on $X$ such that $y\mapsto \xi_y$ is Borel-measurable, $\xi_y$ is concentrated on $\pi^{-1}(y)$ for all $y\in Y$, and
\[\int \xi_y(f) \dd\check\mu = \int f \dd\mu \qquad \forall f\in L^1(\mu).\]
Moreover $(\xi_y)_{y\in Y}$ is uniquely defined by these properties up to a $\check\mu$-negligible set.
\end{proposition}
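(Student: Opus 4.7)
The plan is to give the standard construction of regular conditional probabilities, specialized to the continuous surjection $\pi$ between compact metric spaces. The argument will split into three pieces: uniqueness, existence via Radon-Nikodym, and verification that $\xi_y$ concentrates on the fiber $\pi^{-1}(y)$. I expect the last piece to be the main obstacle.

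For uniqueness, I would exploit the separability of $C(X)$ and fix a countable dense subset $\{f_n\}$. If $(\xi_y)$ and $(\xi'_y)$ are two disintegrations, then applying the defining identity to products of the form $f_n \cdot (g\circ\pi)$ with $g \in C(Y)$ gives
\[\int g(y)\,\xi_y(f_n)\dd\check\mu(y) = \int g(y)\,\xi'_y(f_n)\dd\check\mu(y).\]
Varying $g$ in $C(Y)$ forces $\xi_y(f_n) = \xi'_y(f_n)$ outside a $\check\mu$-null set $N_n$, and off $\bigcup_n N_n$ the two measures agree on a dense family in $C(X)$ and thus coincide.

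For existence, I would enumerate the same countable dense family $\{f_n\} \subset C(X)$ with $f_0 \equiv 1$, and for each $n$ consider the finite signed measure $A \mapsto \int_{\pi^{-1}(A)} f_n \dd\mu$ on $\alg{B}_Y$. It is absolutely continuous with respect to $\check\mu$ (since $\lvert \int_{\pi^{-1}(A)} f_n \dd\mu \rvert \le \lVert f_n \rVert_\infty \check\mu(A)$), so Radon-Nikodym supplies densities $F_n \in L^1(\check\mu)$ with $\int g F_n \dd\check\mu = \int (g\circ\pi) f_n \dd\mu$ for every bounded measurable $g$ on $Y$. Discarding a countable union of $\check\mu$-null sets, the assignment $f_n \mapsto F_n(y)$ becomes simultaneously $\mathbb{Q}$-linear, positive, and bounded by $\lVert f_n \rVert_\infty$; it then extends by density to a positive linear functional of mass $F_0(y) = 1$ on $C(X)$. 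Riesz representation produces the desired probability measure $\xi_y$, and measurability of $y \mapsto \xi_y$ follows from that of the countably many $F_n$.

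The delicate step is to show $\xi_y(\pi^{-1}(y)) = 1$. For this I would test against functions pulled back from $Y$: fixing a countable dense $\{h_k\} \subset C(Y)$ and any $g \in C(Y)$, the defining identity yields
\[\int g(y)\,\xi_y(h_k\circ\pi)\dd\check\mu(y) = \int (g h_k)\circ\pi\dd\mu = \int g(y)\,h_k(y)\dd\check\mu(y),\]
hence $\xi_y(h_k\circ\pi) = h_k(y)$ for $\check\mu$-a.e.\ $y$. Since continuous functions separate points on the compact metric space $Y$, excluding the corresponding countable union of null sets forces each remaining $\xi_y$ to be supported on the closed fiber $\pi^{-1}(y)$. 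Finally the defining identity extends from $\{f_n\}$ to all of $C(X)$ by density, and to $L^1(\mu)$ by the monotone class theorem together with truncation, completing the proof.
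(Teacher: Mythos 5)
The paper does not prove this proposition: it is stated as a classical result with citations to Rokhlin and Simmons, so there is no in-text argument to compare against. Your proof is nevertheless essentially the standard construction of regular conditional probabilities (Radon--Nikodym densities on a countable dense family, Riesz representation, then concentration on fibers), which is the route taken in the cited references, and the plan is correct. Two small points of care. First, for the claim that $f_n \mapsto F_n(y)$ is $\mathbb{Q}$-linear and positive off a single null set, you should take the countable family to be the $\mathbb{Q}$-linear lattice generated by a dense sequence (still countable), so that the relevant linear relations and order relations among the $f_n$ are actually present to be tested; otherwise the statement is vacuous. Second, in the concentration step the equality
\[\int g(y)\,\xi_y(h_k\circ\pi)\dd\check\mu(y) = \int (g h_k)\circ\pi\dd\mu\]
is not a direct consequence of the stated ``defining identity'' $\int \xi_y(f)\dd\check\mu=\int f\dd\mu$; rather it is the Radon--Nikodym identity $\int g\,F_n\dd\check\mu=\int(g\circ\pi)f_n\dd\mu$ baked into your construction, extended by density from $\{f_n\}$ to $h_k\circ\pi\in C(X)$. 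Using the weaker defining identity alone would require already knowing that $\xi_y$ lives on the fiber, which is circular. As phrased the logic is salvageable, but the wording should make clear you are invoking the stronger property of the construction. With these clarifications the argument is sound, and the conclusion $\pi_*\xi_y=\delta_y$ for $\check\mu$-a.e.\ $y$ (hence $\xi_y(\pi^{-1}(y))=1$, the fiber being closed) follows.
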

For example, if $\mu=\sigma_*\check\mu$ for some section $\sigma$, then $\xi_y = \delta_{\sigma(y)}$ for $\check\mu$-almost all $y\in Y$.

Given a function $f:X\to\mathbb{R}$ in $L^1(\mu)$, we can define a function in $L^1(\check\mu)$ by $\xi(f):y\mapsto \xi_y(f)$ (and then $\check\mu(\xi(f))=\mu(f)$ and $\xi(f)(y)$ only depends on the values of $f$ on $\pi^{-1}(y)$). In Section \ref{sec:decay}, we shall study how much regularity $\xi(f)$ retains from the regularity of $f$; but we have to keep in mind that even when $f$ is continuous, $\xi(f)$ is unambiguously defined only modulo a $\check\mu$-negligible set.
\begin{definition}
We say that a measurable function $u:Y\to \mathbb{R}$ \emph{has a continuous version} if there exist a continuous $\bar u:Y\to \mathbb{R}$ which is equal to $u$ at $\check\mu$-almost every point. If $\supp\check\mu=Y$, then $\bar u$ is unique.

We say that $\xi$ \emph{preserves continuity} if for all continuous $f:X\to\mathbb {R}$, $\xi(f): y\mapsto \xi_y(f)$ has a continuous version. 

If $\omega,\check\omega$ are two moduli of continuity, we say that $\xi$ is \emph{$(\omega,\check\omega)$-bounded} if for all $\omega$-continuous $f$, $\xi(f)$ is $\check\omega$-continuous and moreover the linear map $f\mapsto \xi(f)$ is a continuous operator $\Hol_\omega(X) \to \Hol_{\check\omega}(Y)$.
% We say that $\xi$ is \emph{strongly $(\omega,\check\omega)$-bounded} if for all $\omega$-continuous $f$ and all non-negative integer $k$, $\xi(fT^k)$ is $\check\omega$-continuous and moreover the linear maps $f\mapsto \xi(fT^k)$ form an equicontinuous family  (i.e. there exist $D$ such that for all $f$ and all $k$, $\Hol_{\check\omega}(\xi(fT^k)) \le D \Hol_\omega(f)$).
If $\check\omega=\omega$, then we simply say that $\xi$ is $\omega$-bounded.
\end{definition}

%%%%%%%%%%%%%%%%%%%%%%%%%%%%%%%%%%%%%%%%%%%%%%%%%%%%%%%%%%%%%%%
\subsection{Physicality, observability, SRB}\label{sec:def-physicality}

Assume now that $X$ and $Y$ are equipped with measures $\lambda_X$ and $\lambda_Y$ (which \emph{a priori} need not have any particular relation with $T$, $S$ but will serve as reference measure), e.g. $X,Y$ are manifolds equipped with volume forms, or are domains of $\mathbb{R}^m, \mathbb{R}^{\check m}$ equipped with the Lebesgue measure.

\begin{definition}
The \emph{Basin} of a $T$-invariant measure $\mu$ is defined as
\[ \basin(\mu) = \Big\{ x\in X \,\Big|\, \frac1n \sum_{k=0}^{n-1} \delta_{T^kx} \to \mu \Big\} \]
(where $\to$ denotes weak-$*$ convergence and $\delta_x$ is the Dirac mass at $x$).

The invariant measure $\mu$ is said to be \emph{physical} if its basin
 has a positive volume: 
 \[\lambda_X(\basin(\mu))>0.\]
\end{definition}
Often, physical measure are said to be the ones that can be seen in practice, given they drive the behavior of a positive proportion of the points. However, note that in some cases Guihéneuf \cite{guiheneuf2015dynamical} has shown that non-physical measures could be actually observed.

Physical measures do not always exist, and a more general class of measure was proposed in \cite{catsigeras2011srb}.
\begin{definition}
Given $x\in X$, denote by $\po(x)\subset \proba(X)$ the set of cluster points of the sequence $\big( \sum_{k=0}^{n-1} \delta_{T^kx} \big)_n$. Observe that $\po(x)\subset\proba_T(X)$. 

Given $\mu\in \proba(X)$ and $\varepsilon>0$, the $\varepsilon$-basin of $\mu$ is
\[\basin_\varepsilon(\mu) = \{x\in X \mid \wass(\po(x),\mu)<\varepsilon \}. \]

An invariant measure $\mu\in \proba_T(X)$ is said to be \emph{observable} when for all $\varepsilon>0$, its $\varepsilon$-Basin has positive volume.
(Note that while choosing $\wass$ as metric has an influence on the $\varepsilon$-basins, any metric inducing the weak-$*$ topology yields the same notion of observability.)
\end{definition}

It is important to make the distinction between the above notion of \emph{physical} measures, the related but distinct notion of \emph{SRB} measure (beware some authors use ``SRB'' instead of ``physical''). To introduce SRB measure we need to recall some preliminary definitions.

Let $U$ be an open bounded set of a manifold $M$; a diffeomorphism onto its image $T:U\to U$ has a stable subset $\Lambda := \bigcap_n T^n(U)$, called its \emph{attractor}, on which $T$ induces a homeomorphism. Assuming that $\overline{T(U)} \subset U$ (equivalently, that the closure \emph{in $U$} of $T(U)$ is compact), the attractor is compact: we have
$\overline{T^{n+1}(U)} = T^n(\overline{T(U)}) \subset T^{n}(U) \subset \overline{T^{n}(U)}$, so that $\Lambda = \bigcap_n \overline{T^{n}(U)}$ is a decreasing intersection of compact sets.

One says that $\Lambda$ is a \emph{strongly partially hyperbolic} attractor whenever there are continuous sub-bundles $E^u,E^s$ of $T_\Lambda M$ (the restriction of the tangent bundle $TM$ to $\Lambda$) of respective dimension $d_u$, $d_s$, and there are numbers $C$ and $\lambda_+ > \lambda_- > 0$ such that for all $x\in \Lambda$ and all $n\in\mathbb{N}$:
\[ \lVert DT^n_x(u) \rVert \le C\lambda_-^n \lVert u\rVert \quad\forall u\in E^s_x \quad\text{and}\quad  \lVert DT^n_x(u) \rVert \ge C^{-1}\lambda_+^n \lVert u\rVert \quad\forall u\in E^u_x  \]
If moreover $\lambda_-<1<\lambda_+$, one says that $\Lambda$ is a \emph{uniformly hyperbolic} attractor.

Assuming $\Lambda$ is a uniformly hyperbolic attractor, we get an invariant ``stable'' foliation $W^s$ of $U$, whose leaf $W^s_x$ through $x\in \Lambda$ is the set of points whose orbit converge to the orbit of $x$; and an invariant ``unstable'' lamination $W^u$ of $\Lambda$, whose leaf $W^u_x$ through $x$ is the set of points whose backward orbit converges to the backward orbit of $x$. The leaves of $W^s$ and $W^u$ are $\C^1$, and they are continuous but not necessarily transversely $\C^1$. Moreover $T_xW^s = E^s_x$ and $T_xW^u = E^u_x$.

Locally, we can then write the attractor $\Lambda$ as a product (one factor corresponding to the unstable direction, the other to the stable direction). Given an invariant measure $\mu\in\proba_T(U)$ (which must be supported on $\Lambda)$, we can disintegrate the restriction of $\mu$ to a small open set of $\Lambda$ with respect to the (local) projection $\pi^s$ on the stable direction, obtaining one one hand a family of measures $(\mu_L)_L$ supported on each local unstable leaf $L$, and on the other hand a projected measure $\nu=\pi^s_*\mu$.
\begin{definition}\label{defi:SRB}
We say that $\mu$ is an \emph{SRB} measure when in this local disintegration, $\mu_L$ is absolutely continuous with respect to the Riemannian volume induced on $L$ for $\nu$-almost all $L$.
\end{definition}

%%%%%%%%%%%%%%%%%%%%%%%%%%%%%%%%%%%%%%%%%%%%%%%%%%%%%%%%%%%%%%%
\subsection{Wasserstein metric and its vertical version}\label{sec:def-Wasserstein}

We will make use of the Wasserstein metric to metrize the weak-$*$ topology on $\proba(X)$. It is defined for $\mu_0,\mu_1\in\proba(X)$ by
\[\wass(\mu_0,\mu_1) = \inf_{\gamma\in\Gamma(\mu_0,\mu_1)} \int d(x_0,x_1) \dd\gamma(x_0,x_1) = \sup_{f\ 1-\text{Lipschitz}} \big\lvert \mu_0(f) -\mu_1(f) \big\rvert\]
where $\Gamma(\mu_0,\mu_1)$ is the set of \emph{couplings}, or \emph{transport plans} between $\mu_0$ and $\mu_1$, i.e. the set of $\gamma\in \proba(X\times X)$ such that $\gamma(A\times X) = \mu_0(A)$ and $\gamma(X\times A) = \mu_1(A)$ for all Borel $A\subset X$. The equality between the two definitions (by transport plans or by duality with Lipschitz functions) is not trivial, and is called Kantorovich duality. The infimum is reached, any transport plan realizing it is said to be optimal, and the set of optimal plans is compact in the weak-$*$ topology (see e.g. \cite{villani2009oldandnew}).

To prove Theorem \ref{theo:lift} below we introduce a variation of the Wasserstein metric where mass is only allowed to move along fibers. This constraint implies that we need to consider pairs of measure with the same projection. Similar ideas have been developed in \cite{galatolo2010lorenz}, \cite{araujo2014decay} and \cite{galatolo2015spectral}, but in somewhat restricted settings, without taking full advantage of the dual formulations of the Wasserstein metric and of the disintegration theorem.

For each $\check\mu\in \proba(Y)$, by continuity of $\pi$ the fiber $\pi_*^{-1}(\check\mu)$ is a closed subset of $\proba(X)$ in the weak-$*$ topology, thus is compact. 
Set $\Delta_\pi = \{(x_0,x_1)\in X\times X \mid \pi(x_0) = \pi(x_1) \}$. 
Given any $\mu_0,\mu_1 \in\pi_*^{-1}(\check\mu)$, we denote by $\Gamma_\pi(\mu_0,\mu_1)$ the set of $\gamma\in \Gamma(\mu_0,\mu_1)$ which are concentrated on $\Delta_\pi$. We define:
\[\wass^{\check\mu} (\mu_0,\mu_1) = \inf_{\gamma\in\Gamma_\pi(\mu_0,\mu_1)} \int d(x_0,x_1) \dd\gamma(x_0,x_1).\]
We will see in a minute that this is a finite number, but it is already clear that $\wass\le \wass^{\check\mu}$.

\begin{lemma}\label{lemm:vertical}
As soon as $\pi_*\mu_0=\pi_*\mu_1$, the set $\Gamma_\pi(\mu_0,\mu_1)$ is non-empty, and as a consequence $\wass^{\check\mu}(\mu_0,\mu_1)<\infty$. More precisely, if $(\xi_y)_{y\in Y}$ and $(\zeta_y)_{y\in Y}$ are the disintegrations of $\mu_0$ and $\mu_1$ with respect to $\pi$, then
\[\wass^{\check\mu}(\mu_0,\mu_1) = \int \wass(\xi_y,\zeta_y) \dd\check\mu(y).\]
\end{lemma}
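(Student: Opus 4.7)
The plan is to reduce everything to the Disintegration Theorem already recalled as Proposition \ref{prop:disintegration}, applied twice: once on $X$ to produce couplings, and once on $X\times X$ to analyze arbitrary elements of $\Gamma_\pi$.

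First I would establish non-emptiness of $\Gamma_\pi(\mu_0,\mu_1)$ by explicit construction. Writing $(\xi_y)_{y\in Y}$ and $(\zeta_y)_{y\in Y}$ for the disintegrations of $\mu_0$ and $\mu_1$, I would define the ``product'' coupling along each fiber, namely
\[\gamma_0 := \int_Y \xi_y\otimes\zeta_y \dd\check\mu(y),\]
interpreted as the measure such that $\gamma_0(f) = \int_Y \int_{X\times X} f(x_0,x_1)\dd\xi_y(x_0)\dd\zeta_y(x_1)\dd\check\mu(y)$ for all bounded Borel $f$. The measurability of $y\mapsto \xi_y\otimes \zeta_y$ follows from the measurability of $y\mapsto\xi_y$ and $y\mapsto\zeta_y$; since $\xi_y\otimes\zeta_y$ is carried by $\pi^{-1}(y)\times \pi^{-1}(y)\subset \Delta_\pi$, the measure $\gamma_0$ is concentrated on $\Delta_\pi$, and checking the marginals against a test function $f(x_0,x_1)=g(x_0)$ reduces to the defining property of the disintegration of $\mu_0$ (and symmetrically for $\mu_1$). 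Thus $\gamma_0\in\Gamma_\pi(\mu_0,\mu_1)$, and since $\diam X\le 1$ we get $\wass^{\check\mu}(\mu_0,\mu_1)\le 1<\infty$.

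For the equality, the ``$\le$'' direction requires a fiberwise optimal coupling. For each $y$, the set $\Gamma(\xi_y,\zeta_y)$ is a weak-$*$ compact subset of $\proba(X\times X)$ and the cost functional $\gamma\mapsto \int d\dd\gamma$ is continuous, so the set of optimizers is a non-empty compact subset; picking a measurable selection $y\mapsto \gamma_y^{\mathrm{opt}}$ (by the Measurable Selection Theorem applied to the Borel-measurable multifunction) and setting $\gamma^{\mathrm{opt}} = \int_Y \gamma_y^{\mathrm{opt}}\dd\check\mu(y)$, the same verification as above shows $\gamma^{\mathrm{opt}}\in\Gamma_\pi(\mu_0,\mu_1)$, and Fubini gives $\int d\dd\gamma^{\mathrm{opt}}=\int_Y \wass(\xi_y,\zeta_y)\dd\check\mu(y)$.

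For the reverse inequality, take any $\gamma\in \Gamma_\pi(\mu_0,\mu_1)$ and disintegrate it with respect to the continuous map $\Pi:\Delta_\pi\to Y$, $(x_0,x_1)\mapsto\pi(x_0)=\pi(x_1)$; since $\Pi_*\gamma=\check\mu$, we obtain a $\check\mu$-measurable family $(\gamma_y)_{y\in Y}$ of probability measures, with $\gamma_y$ concentrated on $\pi^{-1}(y)\times \pi^{-1}(y)$. Computing the marginals of $\gamma_y$ against test functions depending only on $x_0$ (resp.\ $x_1$) and invoking the uniqueness of the disintegration of $\mu_0$ (resp.\ $\mu_1$), we get that $\gamma_y\in\Gamma(\xi_y,\zeta_y)$ for $\check\mu$-almost every $y$. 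Hence $\int d\dd\gamma_y\ge\wass(\xi_y,\zeta_y)$, and integrating with respect to $\check\mu$ finishes the proof. The only delicate step is the measurable selection of optimal plans; this is the main obstacle, but it is standard (see e.g.\ the arguments in \cite{villani2009oldandnew}) once one notices that the cost functional and the constraint vary Borel-measurably in $y$.
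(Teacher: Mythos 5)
Your proof is correct and takes essentially the same route as the paper: construct the fiberwise product coupling for non-emptiness, use a measurable selection of fiberwise optimal plans to get the upper bound, and disintegrate an arbitrary $\gamma\in\Gamma_\pi(\mu_0,\mu_1)$ along $\Pi:\Delta_\pi\to Y$ combined with uniqueness in the Disintegration Theorem to get the lower bound. If anything, you are slightly more explicit than the paper about checking that the disintegration $\gamma_y$ of an arbitrary $\gamma$ has marginals $\xi_y$ and $\zeta_y$, a point the paper leaves implicit.
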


\begin{proof}
Choose measurably $\eta_y \in \Gamma(\xi_y,\zeta_y)$ for each $y\in Y$ (e.g. $\eta_y = \xi_y\otimes\zeta_y$), and
let $\gamma = \int \eta_y \dd\check\mu \in \proba(X\times X)$, i.e. for all Borel $A,B\in X$, $\gamma(A\times B) = \int  \eta_y(A\times B) \dd\check\mu$. Then $\gamma(A\times X) = \int \xi_y(A) \dd\check\mu = \mu_0(A)$ and $\gamma(X\times A) = \int \zeta_y(A) \dd\check\mu = \mu_1(A)$ so that $\gamma\in\Gamma(\mu_0,\mu_1)$. Moreover since $\eta_y$ projects to two measures supported on $\pi^{-1}(y)$, it is supported on $\pi^{-1}(y)\times \pi^{-1}(y)\subset \Delta_\pi$. It follows that  $\gamma$ is concentrated on $\Delta_\pi$ and $\gamma\in \Gamma_\pi(\mu_0,\mu_1)$. Any $\gamma\in \Gamma_\pi(\mu_0,\mu_1)$ is of the form $\int \eta_y \dd\check\mu$, $(\eta_y)_{y\in Y}$ being the disintegration of $\gamma$ with respect to the map induced by $\pi$ from $\Delta_\pi$ to $Y$. Since
$\int d(x,x')\dd\gamma(x,x') = \iint d(x,x') \dd\eta_y(x,x') \dd\check\mu(y) \ge \int \wass(\xi_y,\zeta_y) \dd\check\mu(y)$,
taking an infimum we get $\wass^{\check\mu}(\mu_0,\mu_1)\ge \int \wass(\xi_y,\zeta_y)\dd\check\mu$.

For each $y$, the set of optimal transport plans from $\xi_y$ to $\zeta_y$ is compact, thus by the measurable selection theorem there is a measurable family $(\eta_y)_{y\in Y}$ such that for $\check\mu$-almost all $y\in Y$, $\int d(x,x') \dd\eta_y(x,x') = \wass(\xi_y,\zeta_y)$. It follows $\wass^{\check\mu}(\mu_0,\mu_1)\le \int \wass(\xi_y,\zeta_y)\dd\check\mu$.
\end{proof}

% Reference for R-F metric-valued theorem: MODEL-FREE CONSISTENCY OF GRAPH PARTITIONING

\begin{proposition}\label{prop:complete}
For all $\check\mu\in\proba(Y)$, $\wass^{\check\mu}$ is a complete metric on the set $\pi_*^{-1}(\check\mu)$.
\end{proposition}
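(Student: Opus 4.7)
The idea is to exploit the formula from Lemma \ref{lemm:vertical}, which identifies $\wass^{\check\mu}$ with an $L^1(\check\mu)$-average of fibered Wasserstein distances, and transfer completeness from the fiberwise Wasserstein spaces.

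First I would verify that $\wass^{\check\mu}$ is indeed a metric on $\pi_*^{-1}(\check\mu)$. Symmetry is obvious from the symmetry of couplings. Positivity and separation follow from Lemma \ref{lemm:vertical}: $\wass^{\check\mu}(\mu_0,\mu_1)=0$ forces $\wass(\xi_y,\zeta_y)=0$ for $\check\mu$-a.e.\ $y$, hence $\xi_y=\zeta_y$ $\check\mu$-a.e., hence $\mu_0=\mu_1$. For the triangle inequality, given $\mu_0,\mu_1,\mu_2\in\pi_*^{-1}(\check\mu)$ with disintegrations $(\xi^i_y)_y$, one selects measurably (via the Measurable Selection Theorem applied to the compact set of optimal plans) optimal couplings $\eta^{01}_y\in\Gamma(\xi^0_y,\xi^1_y)$ and $\eta^{12}_y\in\Gamma(\xi^1_y,\xi^2_y)$, glues them through $\xi^1_y$ into a coupling $\eta^{02}_y\in\Gamma(\xi^0_y,\xi^2_y)$, and integrates over $\check\mu$; combined with Lemma \ref{lemm:vertical} and the triangle inequality for $\wass$ on each fiber, this gives $\wass^{\check\mu}(\mu_0,\mu_2)\le\wass^{\check\mu}(\mu_0,\mu_1)+\wass^{\check\mu}(\mu_1,\mu_2)$.

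For completeness, let $(\mu_n)_n$ be a Cauchy sequence for $\wass^{\check\mu}$ and extract a subsequence $(\mu_{n_k})_k$ such that $\sum_k \wass^{\check\mu}(\mu_{n_k},\mu_{n_{k+1}})<\infty$. Writing $(\xi^k_y)_y$ for the disintegration of $\mu_{n_k}$ and applying Lemma \ref{lemm:vertical} together with monotone convergence, I get
\[ \int \sum_k \wass(\xi^k_y,\xi^{k+1}_y)\dd\check\mu(y) = \sum_k \wass^{\check\mu}(\mu_{n_k},\mu_{n_{k+1}}) < \infty, \]
so for $\check\mu$-a.e.\ $y$ the sequence $(\xi^k_y)_k$ is Cauchy in $\wass$ inside the compact space $\proba(\pi^{-1}(y))$, hence converges to some $\xi_y\in\proba(\pi^{-1}(y))$. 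Since $y\mapsto \xi_y$ is an a.e.\ limit of measurable maps (valued in the Polish space $\proba(X)$ with the Wasserstein metric), it is measurable, and I can define $\mu\in\proba(X)$ by $\mu(A)=\int \xi_y(A)\dd\check\mu(y)$ for Borel $A\subset X$. By construction $\pi_*\mu=\check\mu$ and $(\xi_y)_y$ is the disintegration of $\mu$.

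It remains to check $\wass^{\check\mu}(\mu_{n_k},\mu)\to 0$. Using Lemma \ref{lemm:vertical} again, $\wass^{\check\mu}(\mu_{n_k},\mu)=\int \wass(\xi^k_y,\xi_y)\dd\check\mu(y)$; the integrand converges to $0$ for $\check\mu$-a.e.\ $y$ and is uniformly bounded by $\diam X\le 1$, so dominated convergence finishes the argument. A Cauchy sequence with a convergent subsequence being itself convergent to the same limit, this establishes completeness. The main technical point is the measurability of $y\mapsto \xi_y$ and the fact that integrating it yields a bona fide measure whose disintegration is recovered; everything else reduces to standard manipulations once the key reduction via Lemma \ref{lemm:vertical} is exploited.
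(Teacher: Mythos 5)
Your proof is correct and takes essentially the same approach as the paper's: both rest on the identity from Lemma \ref{lemm:vertical} identifying $\wass^{\check\mu}$ with the $L^1(\check\mu)$ distance between the (fiberwise) disintegrations, and both then use completeness (in fact compactness) of each fiber space $(\proba(\pi^{-1}(y)),\wass)$. The only difference is one of exposition: the paper disposes of the completeness step by citing the Riesz--Fischer theorem for $L^1$ functions valued in a complete metric space and noting that the set of disintegrations of elements of $\pi_*^{-1}(\check\mu)$ is closed in that $L^1$ space, whereas you unpack that black box and reprove it explicitly (passing to a subsequence with summable increments, obtaining a.e.\ fiberwise Cauchy hence convergent sequences, reassembling a measure by integrating the limiting kernel, and closing the loop via dominated convergence). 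You also supply a check that $\wass^{\check\mu}$ is a metric, which the paper leaves implicit; that addition is harmless and correct.
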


\begin{proof}
The expression $\int \wass(\xi_y,\zeta_y) \dd\check\mu(y)$ is the $L^1(\check\mu)$ metric for functions from $Y$ to the compact metric space $(\proba(X),\wass)$. By Lemma \ref{lemm:vertical}, $\wass^{\check\mu}$ is the restriction of this metric to the closed subset $\pi_*^{-1}(\check\mu)$. The claim thus follows from the Riesz-Fischer theorem for functions with values in a complete metric space.
\end{proof}

%
%\begin{corollary}
%If $T$ is uniformly continuous on fibers, for all $\check\mu\in\proba(Y)$ and all $\mu_0,\mu_1\in\pi_*^{-1}(\check\mu)$:
%\[\wass^{\check\mu}(T_*\mu_0, T_*\mu_1) \le \omega\wass^{\check\mu}(\mu_0,\mu_1).\]
%\end{corollary}
%
%\begin{proof}
%With the above notation, $(T_*(\xi_y))_y$ and $(T_*(\zeta_y))_y$ are the disintegrations of $T_*\mu_0$ and $T_*\mu_1$ with respect to $\pi$
%\end{proof}

%%%%%%%%%%%%%%%%%%%%%%%%%%%%%%%%%%%%%%%%%%%%%%%%%%%%%%%%%%%%%%%
\subsection{Transfer operators, spectral gap and correlations}\label{sec:def-transfer}

Transfer operator are multifaceted objects that are both tools and a objects of study. We will use a definition that needs to introduce a generalization of the notion of invariant measure, and then we shall describe other equivalent definitions. 

%%%%%%%%
\subsubsection{Quick introduction to transfer operators}

We consider $S$ acting on $Y$ since it is the level at which transfer operator will be most relevant.
\begin{definition}
A measure $\lambda_Y\in\proba(Y)$ is said to be a \emph{conformal} measure of $S$ when $S_*\lambda_Y$ is absolutely continuous with respect to $\lambda_Y$.

Given a conformal measure $\lambda_Y$ of $S$, one defines the \emph{transfer operator}
\[\chop{L}=\chop{L}_{\lambda_Y}:L^1(\lambda_Y) \to L^1(\lambda_Y)\]
of $S$ with respect to $\lambda_Y$ by $S_*(f\dd\lambda_Y) = \chop{L}(f) \dd\lambda_Y$.
\end{definition}
For example, the Lebesgue measure on the circle is a conformal measure for all local $\C^1$ diffeomorphisms (but not for a map that is constant on an interval). The transfer operator simply translates the action of $S_*$ on the set of absolutely continuous measures (with respect to $\lambda_Y$) to the space of densities. In particular, finding an absolutely continuous invariant measure is equivalent to finding a non-negative, non-zero eigenfunction of $\chop{L}$ (the eigenvalue is then necessarily $1$, since $\int \chop{L}(f)\dd\lambda_Y = \int f \dd\lambda_Y$).

Another classical way to say the same thing is to define $\chop{L}$ as the dual operator of the Koopman operator $f \mapsto f\circ S :L^\infty(\lambda_Y) \to L^\infty(\lambda_Y)$, i.e. to characterize it by the property
\begin{equation}
\int f \cdot  \chop{L}(g) \dd\lambda_Y = \int f\circ S \cdot g \dd\lambda_Y \qquad \forall f\in L^\infty(\lambda_Y),\ \forall g\in L^1(\lambda_Y).
\label{eq:transfer}
\end{equation}

Invariant measures $\check\mu$ are characterized by the property that their transfer operator have the property $\chop{L}_{\check\mu}\one = \one$ where $\one$ is the constant function with value one. 

%%%%%%%%
\subsubsection{Decay of correlations and spectral gap}

Transfer operators are precious tools to study the decay of correlations.
\begin{definition}\label{defi:decay}
Given $\mu\in\proba_T(X)$ and functions $u,v : X\to\mathbb{R}$ (called observables), \emph{correlations} are defined (whenever it makes sense) as
\[\corr{n}{\mu}(u,v) = \Big\lvert \int u\circ T^n \cdot v \dd\mu - \int u\dd\mu \int v\dd\mu \Big\rvert\]
(and of course $\corr{n}{\check\mu}(f,g)$ with $f,g:Y\to\mathbb{R}$ implicitly involves the map $S$).

We say that $\mu\in\proba_T(X)$ \emph{has decay of correlations $(b_n)_{n\in\mathbb{N}}$ for $\omega$-continuous observables} whenever for all $f\in \Hol_{\omega}(X)$, all $g\in L^1(\mu)$ and all $n\in\mathbb{N}$,
\[\corr{n}{\mu}(f,g) \le b_n \Hol_\omega(f) \lVert g\rVert_{L^1(\mu)}. \]
\end{definition}

The link between the transfer operator and decay of correlation is quite direct: assuming $g\in L^1(\check\mu)$, $f\in L^\infty(\check\mu)$ and adding a constant to $f$ to ensure $\check\mu(f)=0$, we obtain
\[
\corr{n}{\check\mu}(g,f) = \Big\lvert \int g\circ S^n \cdot f\dd\check\mu \Big\rvert
  = \Big\lvert \int g \cdot \chop{L}_{\check\mu}^n(f) \dd\check\mu \Big\rvert 
  \le \lVert g \rVert_{L^1(\check\mu)} \lVert \chop{L}_{\check\mu}^n(f) \rVert_{L^\infty(\check\mu)}
\]
(other pairs of functional spaces can be considered, such as $L^2$ and $L^2$, or inverting the roles of $L^1$ and $L^\infty$). One thus only has to prove decay of $\chop{L}_{\check\mu}^n$ for zero-average observables in some functional space to obtain a corresponding decay of correlations.
A particularly nice case, both to find an Acip and to prove exponential decay of correlations for it, is when the transfer operator has a spectral gap.
\begin{definition}\label{defi:SG}
Given a Banach space $\Banach$ of functions $Y\to\mathbb{R}$ whose norm $\lVert\cdot\rVert$ is not less than $\lVert\cdot\rVert_{\infty}$ (one could generalize to $\lVert\cdot\rVert_{L^1(\check\mu)}$), one says that $\chop{L}$ has a \emph{spectral gap} on $\Banach$ whenever
\begin{itemize}
\item  $\chop{L}$ preserves $\Banach$ and acts on it as a bounded operator,
\item  there is a positive function $h\in \Banach$ such that $\chop{L} h = h$, which without lack of generality can be assumed to satisfy $\lambda_Y(h)=1$,
\item there are numbers $C\ge 1, \delta\in(0,1)$  such that for all $f\in\Banach$ with $\lambda_Y(f) = 0$, $\lVert \chop{L}^n f\rVert \le C (1-\delta)^n \lVert f\rVert$.
\end{itemize}
\end{definition}
Then $h\dd\lambda_Y$ is an $S$-invariant probability measure absolutely continuous with respect to $\lambda_Y$, satisfying exponential decay of correlations for observables in $\Banach$ (note that $\chop{L}$ and $\chop{L}_{h\dd\lambda_Y}$ are conjugated one to another).

We shall need some transfer operator to preserve some functional spaces in the following sense.
\begin{definition}
An operator $\op{P}: L^1(\check\mu) \to L^1(\check\mu)$ is said to \emph{preserve} $\Hol_\omega(Y)$ if $\op{P}(\Hol_\omega(Y))\subset \Hol_\omega(Y)$ and if it moreover induces a bounded operator on $\Hol_\omega(Y)$. The operator $\op{P}$ is said to be \emph{iteratively bounded} with respect to $\omega$ if it preserves $\Hol_\omega(Y)$ and if moreover there exist $D\ge 0$ such that for all $n\in\mathbb{N}$ and all $u\in\Hol_\omega(Y)$, $\Hol_\omega(\op{P}^n u) \le D \Hol_\omega(u)$. 
\end{definition}
For example, if $\op{P}$ has a spectral gap on $\Hol_\omega(Y)$, then it is iteratively bounded with respect to $\omega$ (but the latter assumption is much milder than having a spectral gap).

%%%%%%%%
\subsubsection{Disintegrations and transfer operators}

To close this subsection, we shall consider a slightly different point of view on transfer operators, that seems novel in this generality (although it is folklore in the case of Lebesgue measure) and will enable us to relate the transfer operators of $T$ and $S$. We restrict to the case of a $T$-invariant measure $\mu$ and its $S$-invariant projection $\check\mu=\pi_*\mu$. The transfer operators of $(T,\mu)$ and $(S,\check\mu)$ are denoted by $\op{L}$ and $\chop{L}$.

We denote by $(\xi_y)_{y\in Y}$ the disintegration of $\mu$ with respect to the map $\pi$, which we recall is characterized by two properties: $\mu = \int \xi_y \dd\check\mu(y)$ and $\supp \xi_y \subset \pi^{-1}(y)$ for all $y\in Y$.
The disintegration theorem can also be applied to $T$ and $\mu$, and yields an essentially unique measurable family of probability measures $(\eta_x)_{x\in X}$ characterized by  $\mu = \int \eta_x \dd\mu(x)$ and $\eta_x(T^{-1}(x))=1$ for all $x\in X$ (here $T^{-1}(x)$ need not be closed, and while $\eta_x$ is \emph{concentrated} on $T^{-1}(x)$ its \emph{support} could be larger).  

To understand the meaning of this disintegration, one can say that each measure $\eta_x$ collects the ``derivatives'' of $T$ with respect to the measure $\mu$ at the points of $T^{-1}(x)$. The clearest situation is when $T$ is at-most-countable-to-one, in which case $\eta_x$ is atomic and the masses of the atoms can be taken as definition of this derivative. When $T$ is one-to-one, then of course $\eta_x = \delta_{T^{-1}x}$.

We can use the disintegration to express the transfer operator.
\begin{proposition}
We have $\op{L} g(x) = \eta_x(g)$ for all $g\in L^1(\mu)$ and $\mu$-almost all $x\in X$.
\end{proposition}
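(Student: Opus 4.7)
The plan is to exhibit the map $\hat g : x \mapsto \eta_x(g)$ as the (essentially unique) element of $L^1(\mu)$ satisfying the characterizing duality relation \eqref{eq:transfer} for $\op{L}$, so that $\op{L}(g) = \hat g$ follows immediately. First I would observe that $\mu$ being $T$-invariant is in particular conformal for $T$, so $\op{L}$ is well-defined and characterized by
\[ \int f \cdot \op{L}(g) \dd\mu = \int (f\circ T) \cdot g \dd\mu \qquad \forall f\in L^\infty(\mu), \forall g\in L^1(\mu). \]
Measurability of $\hat g$ comes for free from the measurability of $x\mapsto \eta_x$ asserted by the Disintegration Theorem, and integrability from
\[ \int \lvert \hat g(x)\rvert \dd\mu(x) \le \int \eta_x(\lvert g\rvert) \dd\mu(x) = \mu(\lvert g\rvert) <\infty, \]
where the equality is the disintegration formula $\mu = \int \eta_x \dd\mu(x)$ applied to $\lvert g\rvert$.

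Next, given any $f\in L^\infty(\mu)$, I would compute the pairing
\[ \int f(x)\hat g(x) \dd\mu(x) = \int \Big(\int f(x) g(z) \dd\eta_x(z)\Big) \dd\mu(x), \]
and then use the defining property $\eta_x(T^{-1}(x)) = 1$: for $\mu$-a.e.\ $x$ and $\eta_x$-a.e.\ $z$ one has $T(z)=x$, so $f(x) = f(T(z))$ under the inner integral. Substituting and applying the disintegration formula once more to $(f\circ T)\cdot g\in L^1(\mu)$,
\[ \int f \cdot \hat g \dd\mu = \int\!\!\int f(T(z)) g(z) \dd\eta_x(z) \dd\mu(x) = \int (f\circ T)\cdot g \dd\mu. \]
This is exactly the right-hand side of \eqref{eq:transfer}, and since $\op{L}(g)$ is determined modulo $\mu$-null sets by its pairing against $L^\infty(\mu)$-functions (the indicator functions of Borel sets already suffice), we conclude $\op{L}(g)(x) = \eta_x(g)$ for $\mu$-a.e.\ $x$.

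The only real subtlety, and the step I would treat most carefully, is the replacement of $f(x)$ by $f(T(z))$ inside the inner integral: we need the set $\{(x,z)\in X\times X : T(z) = x\}$ to have full measure under the joint measure $\int \delta_x \otimes \eta_x \dd\mu(x)$, and the integrand $(x,z)\mapsto f(T(z))g(z)$ to be jointly measurable so that Fubini-type reasoning applies. Joint measurability is immediate (Borel-measurability of $T$ and of $f,g$), and the full-measure statement is exactly the condition that $\eta_x$ is concentrated on $T^{-1}(x)$ in the disintegration. Note that this does not require $T^{-1}(x)$ to be closed, so no continuity of $T$ is used here; in particular the proof covers the possibly discontinuous setting allowed earlier in the paper.
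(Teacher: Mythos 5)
Your proposal is correct and follows essentially the same route as the paper: define the candidate operator $\tilde{\op{L}}g(x)=\eta_x(g)$, and verify the characterizing duality relation~\eqref{eq:transfer} by using that $x=Tz$ for $\eta_x$-a.e.\ $z$ together with the disintegration formula. You simply spell out the measurability, integrability, and Fubini-type justifications that the paper leaves implicit.
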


\begin{proof}
The proposed formula defines a bounded operator $\tilde{\op{L}}(g)(x) = \eta_x(g)$ of $L^1(\mu)$ into itself, and to prove $\tilde{\op{L}}=\op{L}$ it suffices to check the defining property \eqref{eq:transfer}. Let $f\in L^\infty(\mu)$ and $g\in L^1(\mu)$; using that $x=Tx'$ for $\eta_x$-almost all $x'$, we get
\[\int f \cdot  \tilde{\op{L}}g \dd\mu = \int f(x) \int g(x') \dd\eta_x(x') \dd\mu(x) = \iint f(Tx') g(x') \dd\eta_x(x') \dd\mu(x) 
  = \int f\circ T\cdot g \dd\mu.
\]
\end{proof}

Very often, one works the other way around: the family $(\eta_x)_{x\in X}$ is given and used to define a transfer operator, which is in turned used to construct an invariant measure $\mu$ with the prescribed derivatives. Using the disintegration theorem makes transparent the fact that one can go both ways round in a consistent fashion.

Note that, using either definition of transfer operator we easily get the classical property
$\op{L}(f\circ T \cdot g) = f \op{L}(g)$ (where $g\in L^1(\mu)$, $f\in L^\infty(\mu)$). We have $T_*\eta_x = \delta_x$ since $\eta_x$ is a probability measure supported on $T^{-1}(x)$.

The same applies to $S$, and  we denote by $(\check\eta_y)_{y\in Y}$ the disintegration of $\check \mu$ with respect to $S$ and by $\chop{L}$ its transfer operator. The same relations than above hold, in particular $\chop{L}u(y) = \check\eta_y(u)$. Now, our goal is to relate the transfer operators (or equivalently, the disintegrations) of $T$ and $S$.

\begin{lemma}
For $\check\mu$-almost all $y\in Y$, all $u\in L^1(\check\mu)$ and all $n\in\mathbb{N}$  we have 
\[\pi_*(\smallint \eta_x \dd\xi_y(x)) = \check\eta_y \qquad  \chop{L}^n u(y) = \xi_y\big(\op{L}^n(u\circ \pi) \big).\]
\end{lemma}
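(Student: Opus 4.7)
The plan is to treat the two identities separately. The first is a statement purely about disintegrations, and the second will follow from a short duality computation exploiting the semi-conjugacy $\pi T = S\pi$.

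For the first identity, set $\nu_y := \int \eta_x \dd\xi_y(x) \in \proba(X)$. The first step is to locate its pushforward: since $\eta_x$ is concentrated on $T^{-1}(x)$, and any $x'\in T^{-1}(x)$ satisfies $S\pi(x') = \pi(Tx') = \pi(x)$, the measure $\pi_*\eta_x$ is concentrated on $S^{-1}(\pi(x))$. For $\check\mu$-almost every $y$, $\xi_y$ is concentrated on $\pi^{-1}(y)$, so $\pi_*\nu_y$ is concentrated on $S^{-1}(y)$. Next, I would check the total mass identity by unfolding both disintegrations:
\[
\int \nu_y \dd\check\mu(y) = \int\!\!\int \eta_x \dd\xi_y(x)\dd\check\mu(y) = \int \eta_x \dd\mu(x) = \mu,
\]
so pushing forward by $\pi$ gives $\int \pi_*\nu_y\dd\check\mu(y) = \check\mu$. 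Since each $\pi_*\nu_y$ is a probability measure on $Y$ concentrated on $S^{-1}(y)$, uniqueness in the Disintegration Theorem applied to $(\check\mu,S)$ forces $\pi_*\nu_y = \check\eta_y$ for $\check\mu$-almost every $y$.

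For the second identity, I would test both sides against an arbitrary $f \in L^\infty(\check\mu)$ and integrate against $\check\mu$. On one hand, the dual characterization \eqref{eq:transfer} of $\chop{L}$ gives $\int f\cdot \chop{L}^n u \dd\check\mu = \int f\circ S^n \cdot u \dd\check\mu$. On the other hand, the definition of the disintegration $(\xi_y)$ and \eqref{eq:transfer} applied to $(T,\mu)$ give
\[
\int f(y)\,\xi_y\bigl(\op{L}^n(u\circ\pi)\bigr)\dd\check\mu(y) = \int f\circ\pi \cdot \op{L}^n(u\circ\pi)\dd\mu = \int f\circ\pi\circ T^n \cdot u\circ\pi \dd\mu.
\]
Using $\pi T^n = S^n\pi$ and $\pi_*\mu = \check\mu$, this rewrites as $\int f\circ S^n\cdot u\dd\check\mu$, which matches the left-hand side. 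Since $f$ is arbitrary in $L^\infty(\check\mu)$, the equality $\chop{L}^n u(y) = \xi_y(\op{L}^n(u\circ\pi))$ holds for $\check\mu$-a.e.\ $y$.

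There is no substantial obstacle here: the argument is essentially bookkeeping between the two disintegrations and the two transfer operators, and neither step needs induction. The only point requiring some care is to keep in mind that $\op{L}^n(u\circ\pi)$ is defined only modulo a $\mu$-null set, so the expression $\xi_y(\op{L}^n(u\circ\pi))$ is itself defined only for $\check\mu$-almost every $y$ — which is precisely the sense in which the stated identities are claimed.
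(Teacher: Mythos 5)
Your proof is correct and follows essentially the same route as the paper: the first identity is verified by checking the two defining properties of the disintegration of $\check\mu$ with respect to $S$ (support on $S^{-1}(y)$ and consistency with $\check\mu$) and invoking uniqueness, and the second is proved by duality against a test function in $L^\infty(\check\mu)$, pulling $f$ inside $\xi_y$, applying the defining property of the disintegration, the adjointness of $\op{L}^n$, and the semi-conjugacy $\pi T^n = S^n\pi$. The only cosmetic difference is that the paper pushes the chain of equalities forward from $\xi_y(\op{L}^n(u\circ\pi))$ to $\chop{L}^n u$, while you compute both sides against the test function and compare; the content is identical.
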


\begin{proof}
To prove the first claim, it suffices to check the two defining properties of $(\check\eta_y)_{y\in Y}$. First, the measure $\int \eta_x \dd\xi_y(x)$ is concentrated on $T^{-1}(\pi^{-1}(y)) = \{x\in X \mid \pi T(x) =y\} = \pi^{-1}(S^{-1}(y))$ so that its push-forward by $\pi$ is concentrated on $S^{-1}(y)$. Second, for all $u\in L^1(\check\mu)$ we have
\begin{multline*}
\int \pi_*(\smallint \eta_x \dd\xi_y(x))(u)  \dd\check\mu(y) = \iint \eta_x(u\circ \pi) \dd \xi_y(x) \dd\check\mu(y) \\
  = \int \eta_x(u \circ \pi) \dd\mu(x)
  = \int u\circ\pi \dd\mu 
  = \int u \dd\check\mu.
\end{multline*}

We prove the second claim using the duality definition of $\chop{L}$:
\begin{multline*}
\int \xi_y\big(\op{L}^n(u\pi)\big) \cdot v(y) \dd\check\mu(y) 
  = \int \xi_y\big(\op{L}^n(u\pi) \cdot v\pi \big) \dd\check\mu(y) \\
  = \int \op{L}^n(u \pi) \cdot v\pi \dd\mu
  = \int u\pi \cdot v S^n \pi \dd\mu 
  = \int u\cdot v S^n \dd\check\mu 
  = \int \chop{L}^n(u) \cdot v \dd\check\mu.
\end{multline*}
\end{proof}

%%%%%%%%%%%%%%%%%%%%%%%%%%%%%%%%%%%%%%%%%%%%%%%%%%%%%%%%%%%%%%%
\subsection{Statistical properties}\label{sec:def-stat}

Let us define precisely the three statistical properties we shall focus on (as will be clear from the proofs, we could consider any statistical theorem insensitive to adding a bounded error term to $\sum_{k=1}^n f\circ T^k$.)

\begin{definition}\label{defi:stat}
Let $\LimTh{T}\in\{\mathrm{LIL},\mathrm{CLT},\mathrm{ASIP}\}$;
we shall say that an invariant measure $\mu\in\proba_T(X)$ \emph{satisfies $\LimTh{T}$ for all $\omega$-continuous observables} if for each $f\in \Hol_{\omega}(X)$ there is $\sigma_f\ge 0$ (meant as a standard deviation, not to be confused with a section) such that, whenever $\sigma_f>0$:
\begin{description}
\item[When $\LimTh{T}=\mathrm{LIL}$ (Law of Iterated Logarithm):]  for $\mu$-almost every $x\in X$
\[ \limsup_{n\to \infty} \frac{\sum_{k=1}^n f\circ T^k(x) -n \mu(f)}{\sqrt{2 n\log \log n}} = \sigma_f,\]
\item[When $\LimTh{T}=\mathrm{CLT}$ (Central Limit Theorem):] denoting by $G_{m,\sigma^2}$ the cumulative distribution function of the normal law of mean $m$ and variance $\sigma^2$, for all $r\in \mathbb{R}$
\[ \mu\Big\{x\in X : \frac1{\sqrt{n}} \sum_{k=1}^n f\circ T^k(x) \le r \Big\} \to G_{\mu(f),\sigma_f^2}(r), \]
\item[When $\LimTh{T}=\mathrm{ASIP}$ (Almost sure Invariance Principle):] for some $\lambda\in(0,\frac12]$, there exist a probabilistic space $\Omega$ and two real-valued processes defined on $\Omega$: 
\begin{itemize}
\item $(A_k)_{k\in\mathbb{N}}$ with the same law as $(f\circ T^k(Z))_{k\in\mathbb{N}}$ where $Z$ is a random variable with law $\mu$; 
\item $(B_k)_{k\in\mathbb{N}}$, a sequence of independent Gaussian random variables of mean $\mu(f)$ and variance $\sigma_f^2$
\end{itemize}
such that almost surely $\big\lvert \sum_{k=1}^n A_k -\sum_{k=1}^n B_k \big\rvert = o(n^\lambda)$.
\end{description}
We will usually keep the data $\sigma_f$, $\lambda$ implicit but they are part of the statistical theorem, and when we state that a $\UE(\omega'_p[\rho];\LimTh{T},\omega'_o)$ property for $S$ implies a $\UE(\omega_p[C\rho];\LimTh{T},\omega_o)$ property for $T$, we always implicitly mean that the equilibrium state $\mu_{\varphi}$ of a potential $\varphi$ and $\check\mu:=\pi_*(\mu_\varphi)$ (which will be an equilibrium state for a potential $\check\varphi$) satisfy $\LimTh{T}$ \emph{with the same parameters} under a correspondence $f\mapsto \check f$ (made explicit in Section \ref{sec:equilibrium}), i.e. $\sigma_f = \sigma_{\check f}$ in the respective statements for $\mu$ and $\check\mu$ (and, in the case of the ASIP, additionally $\lambda$ is the same in both statements).
\end{definition}

%%%%%%%%%%%%%%%%%%%%%%%%%%%%%%%%%%%%%%%%%%%%%%%%%%%%%%%%%%%%%%%
%%%%%%%%%%%%%%%%%%%%%%%%%%%%%%%%%%%%%%%%%%%%%%%%%%%%%%%%%%%%%%%
\section{Lifting invariant measures}\label{sec:lift}

This Section uses the material of preliminary subsections \ref{sec:def-moduli},  \ref{sec:def-Wasserstein}, and \ref{sec:def-transfer} for the proof of Theorem \ref{theo:mainSG}.

%%%%%%%%%%%%%%%%%%%%%%%%%%%%%%%%%%%%%%%%%%%%%%%%%%%%%%%%%%%%%%%
\subsection{Existence and uniqueness}\label{sec:existence-uniqueness}

It is proved in \cite{araujo2009singular} that  under a shrinking hypothesis  each $\check\mu\in\proba_S(Y)$ has a lift $\mu\in \proba_T(X)\cap(\pi_*)^{-1}(\check\mu)$. Uniqueness seems to be only known under some ergodicity hypotheses (see \cite{butterley2017disintegration}, Remark 2 (b)). Our first result gives uniqueness in general and a quantified convergence, and generalizes to the case of fibers shrunk \emph{on average} (Definition \ref{defi:shrinking-average}).
\begin{theorem}[Lifting Theorem]\label{theo:lift}
Let $\check\mu$ be an $S$-invariant probability measure, and assume that the fibers of the extension $T$ are shrunk on average with respect to $\check\mu$, with shrinking sequence $(\bar a_n)_n$. Then there is a unique $\mu\in \proba_T(X)$ such that $\pi_*\mu = \check\mu$. Moreover, for all $\nu\in \proba(X)$ (not necessarily invariant) such that $\pi_*\nu = \check\mu$ and all $n\in\mathbb{N}$ we have
$\wass(T_*^n \nu,\mu) \le \bar a_n$;
in particular, $T_*^n\nu \to \mu$ in the weak-$*$ topology.

If $T$ has $(a_n)_n$-shrinking fibers then the above holds for all $\check\mu\in\proba_S(Y)$ with $\bar a_n=a_n$.
\end{theorem}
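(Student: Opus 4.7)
The plan is to work in the complete metric space $(\pi_*^{-1}(\check\mu),\wass^{\check\mu})$ from Proposition \ref{prop:complete} and to show that the iteration of $T_*$ is a uniformly contracting dynamics there, in the sense that the diameter of $T_*^n(\pi_*^{-1}(\check\mu))$ tends to zero. First I would check that $T_*$ preserves $\pi_*^{-1}(\check\mu)$: if $\pi_*\nu=\check\mu$ then $\pi_*(T_*\nu)=S_*\pi_*\nu=S_*\check\mu=\check\mu$ by $S$-invariance of $\check\mu$. This guarantees that the iterates $T_*^n\nu$ all live in the same fiber for $\nu\in\pi_*^{-1}(\check\mu)$, and also that the fiber is nonempty (e.g.\ it contains $\sigma_*\check\mu$ for any measurable section $\sigma$).

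The key estimate is the following uniform contraction of iterates: for all $\mu_0,\mu_1\in\pi_*^{-1}(\check\mu)$ and all $n\in\mathbb{N}$,
\[\wass^{\check\mu}(T^n_*\mu_0,T^n_*\mu_1)\le \bar a_n.\]
To prove it I would pick any $\gamma\in\Gamma_\pi(\mu_0,\mu_1)$ (which is non-empty by Lemma \ref{lemm:vertical}) and push it forward by $T^n\times T^n$. Since $\pi T^n=S^n\pi$, fiber-preservation is itself preserved by the dynamics, so $(T^n\times T^n)_*\gamma$ is still concentrated on $\Delta_\pi$ and thus lies in $\Gamma_\pi(T^n_*\mu_0,T^n_*\mu_1)$. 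Bounding the transport cost pointwise by $d(T^nx_0,T^nx_1)\le \diam(T^n(\pi^{-1}(\pi(x_0))))$ and using that the push-forward of $\gamma$ under $(x_0,x_1)\mapsto \pi(x_0)$ is exactly $\check\mu$, the shrinking-on-average assumption yields the claimed bound.

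From this key estimate, existence, invariance and convergence follow almost formally. For any $\nu\in\pi_*^{-1}(\check\mu)$ and any $k\ge0$, setting $\mu_0=T^k_*\nu$ and $\mu_1=\nu$ gives $\wass^{\check\mu}(T^{n+k}_*\nu,T^n_*\nu)\le \bar a_n$, so $(T^n_*\nu)_n$ is Cauchy and converges to some $\mu\in\pi_*^{-1}(\check\mu)$ by completeness. To show $T_*\mu=\mu$ I would establish a one-step continuity of $T_*$ in $\wass^{\check\mu}$ using uniform continuity along fibers: taking any optimal $\gamma\in\Gamma_\pi(\mu_0,\mu_1)$ and applying Jensen's inequality to the concave modulus $\bar\omega_T$ yields
\[\wass^{\check\mu}(T_*\mu_0,T_*\mu_1)\le \bar\omega_T\!\left(\wass^{\check\mu}(\mu_0,\mu_1)\right),\]
so $T_*$ is continuous on $(\pi_*^{-1}(\check\mu),\wass^{\check\mu})$; passing to the limit in $T^{n+1}_*\nu=T_*(T^n_*\nu)$ gives $T_*\mu=\mu$. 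The rate $\wass(T^n_*\nu,\mu)\le \wass^{\check\mu}(T^n_*\nu,T^n_*\mu)\le\bar a_n$ is then immediate using $\wass\le\wass^{\check\mu}$. Finally, for uniqueness, if $\mu'\in\pi_*^{-1}(\check\mu)$ is also $T$-invariant then $\mu'=T^n_*\mu'\to\mu$, forcing $\mu'=\mu$.

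The only subtle point I anticipate is the step passing from $T^{n+1}_*\nu\to\mu$ to $T_*\mu=\mu$ under the minimal continuity hypothesis of the theorem. The fact that uniform continuity \emph{along fibers} is exactly what is needed for this — through the concavity/Jensen argument producing a one-step modulus of continuity for $T_*$ on $\pi_*^{-1}(\check\mu)$ — is what justifies that the first half of Definition \ref{defi:shrinking} cannot be dropped, and this is where I would take most care in the write-up.
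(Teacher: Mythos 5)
Your proposal is correct and follows essentially the same route as the paper: the key quantitative step $\wass^{\check\mu}(T^n_*\mu_0,T^n_*\mu_1)\le\bar a_n$ via push-forward of a vertical coupling, completeness of $(\pi_*^{-1}(\check\mu),\wass^{\check\mu})$ to extract the limit, the concavity/Jensen argument with $\bar\omega_T$ to deduce invariance of the limit, and re-invoking the key estimate for the rate and for uniqueness. The only (immaterial) difference is that you record the one-step estimate with $\wass^{\check\mu}$ on both sides, whereas the paper is content with $\wass$ on the left; your version is the marginally cleaner statement and you are right to flag that the uniform-continuity-along-fibers hypothesis is precisely what makes this invariance step work when $T$ may fail to be globally continuous (this is exactly the subtlety discussed in Remark \ref{rema:APPV}).
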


\begin{proof}
For all $\mu_0,\mu_1 \in \pi_*^{-1}(\check\mu)$, and all $\gamma\in\Gamma_\pi(\mu_0,\mu_1)$, denoting by $(T^n,T^n)$ the map from $X\times X$ to itself sending $(x_0,x_1)$ to $(T^n x_0,T^nx_1)$ we have \[(T^n,T^n)_*\gamma \in \Gamma_\pi(T^n_*\mu_0, T^n_* \mu_1).\]
Since $\gamma$ is supported on $\Delta_\pi$, for $\gamma$-almost all $(x_0,x_1)$ we have $\pi(x_0)=\pi(x_1)$; using this, that the first marginal of $\gamma$ is $\mu_0$ and that $\pi_*\mu_0=\check\mu$, we have:
\begin{align}
\int d(x_0,x_1) \dd \big((T^n,T^n)_*\gamma\big)(x_0,x_1) &= \int d(T^n x_0,T^nx_1) \dd \gamma(x_0,x_1) \nonumber\\
  &\le \int \diam\big(T^n(\pi^{-1}(\pi(x_0)))\big) \dd\gamma(x_0,x_1) \nonumber\\
  &\le\int \diam\big(T^n(\pi^{-1}(y))\big) \dd\check\mu(y) \nonumber\\
\wass^{\check\mu}(T^n_*\mu_0,T^n_*\mu_1) &\le  \bar a_n. \label{eq:lift}
\end{align}

Applying this to any $\nu\in\pi_*^{-1}(\check\mu)$ and to $T_*^{m-n}\nu$ we get 
$\wass(T_*^n\nu,T_*^m\nu) \le \bar a_n$ for all $n<m\in\mathbb{N}$, i.e. $(T_*^n\nu)_n$ is a Cauchy sequence with respect to $\wass^{\check\mu}$. By Proposition \ref{prop:complete}, it has a limit $\mu\in \pi_*^{-1}(\check\mu)$ in the metric $\wass^{\check\mu}$, which is also a weak-$*$ limit since $\wass\le \wass^{\check\mu}$.

Since $T$ is uniformly continuous along fibers, we have for any $\gamma\in\Gamma_\pi(T_*^n\nu,\mu)$
\[\int d(x,x') \dd\big((T,T)_*\gamma\big)(x,x') \le \int \bar\omega_T(d(x,x')) \dd\gamma(x,x') \le \bar\omega_T\big(\int d(x,x') \dd\gamma(x,x')\big).\]
Taking an infimum we get $\wass(T_*^{n+1}\nu,T_*\mu) \le \bar\omega_T\big(\wass^{\check\mu}(T_*^{n}\nu,\mu) \big)$; the left-hand side converges to $\wass(\mu,T_*\mu)$ while the right-hand side goes to $0$, so that $\mu$ is $T$-invariant.

Reapplying \eqref{eq:lift} to $\nu$ and $\mu$, we get the desired convergence in the Wassertein metric.
\end{proof}

\begin{remark}\label{rema:APPV}
The existence part in Corollary 6.2 in \cite{araujo2009singular} might at first seem more general in the case of shrinking fibers, as no continuity of the map $T$ (denoted there by $F$) is explicitly assumed while we assume uniform continuity \emph{along the fibers} in Definition \ref{defi:shrinking}. However, full continuity is implicitly used in the proof of Corollary 6.2 there: to obtain that $\mu_F$ is invariant, Lemma 6.1 is applied to the observable $\psi\circ F$, implicitly assuming it to be continuous.

A related issue found at other places in the literature is to construct $\mu\in(\pi_*)^{-1}(\check\mu)$ as the limit of push-forward measures $T^n_*\nu$, and deducing invariance of $\mu$ by writing
\[ T_*\mu=  T_*\big(\lim_{n\to\infty} T_*^n\nu\big) = \lim_{n\to\infty} T_*^{n+1}\nu = \mu.\]
This is perfectly fine when $T$ is continuous, but without this assumption the second equality may fail. 

A previous version of this article presented a very similar mistake, claiming that cluster points of empirical measures $\frac1n\sum_{k=0}^{n-1}\delta_{T^k x}$ from a point $x$ where invariant; an illustration of how this can fail when $T$ is discontinuous is given in Remark \ref{r:counter-ex}.
\end{remark}

\begin{corollary}\label{coro:homeo}
If $T$ is an extension of $S$ with shrinking fibers, then the map $\pi_*:\proba(X)\to\proba(Y)$ induces a homeomorphism from $\proba_T(X)$ to $\proba_S(Y)$.
\end{corollary}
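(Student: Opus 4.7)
I would decompose the claim into bijectivity of $\pi_*:\proba_T(X)\to\proba_S(Y)$ and continuity of $\pi_*$ in both directions. Bijectivity is exactly the content of Theorem~\ref{theo:lift}: existence of the lift gives surjectivity, and uniqueness gives injectivity; that $\pi_*\proba_T(X)\subset\proba_S(Y)$ is immediate from the relation $\pi T=S\pi$. Continuity of $\pi_*$ for the weak-$*$ topologies is standard: if $\mu_n\to\mu$ in $\proba(X)$ and $f\in C(Y)$, continuity of $\pi$ makes $f\circ\pi\in C(X)$, so $\pi_*\mu_n(f)=\mu_n(f\circ\pi)\to\mu(f\circ\pi)=\pi_*\mu(f)$. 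Only continuity of the inverse requires genuine work.

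For this, my plan is a compactness-plus-uniqueness argument. Given $\check\mu_n\to\check\mu$ in $\proba_S(Y)$ with respective lifts $\mu_n,\mu$, weak-$*$ compactness of $\proba(X)$ (which is metrizable since $X$ is compact metric) ensures every subsequence of $(\mu_n)$ admits a weak-$*$ convergent sub-subsequence $\mu_{n_k}\to\mu_\infty$. Showing $\mu_\infty=\mu$ for every such cluster point yields $\mu_n\to\mu$ by the subsubsequence principle. Continuity of $\pi_*$ gives $\pi_*\mu_\infty=\check\mu$, so the ``moreover'' part of Theorem~\ref{theo:lift} applied with the (a priori non-invariant) $\nu=\mu_\infty$ gives $T_*^N\mu_\infty\to\mu$ weak-$*$ as $N\to\infty$. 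To conclude $\mu_\infty=\mu$ it thus suffices to prove that $\mu_\infty$ is itself $T$-invariant, for then $T_*^N\mu_\infty=\mu_\infty$ for every $N$ and the displayed convergence collapses to $\mu_\infty=\mu$.

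The main obstacle is exactly this last step: showing that a weak-$*$ cluster point of $T$-invariant measures is $T$-invariant. When $T$ is continuous, $T_*$ is weak-$*$ continuous and $T_*\mu_\infty=\lim_k T_*\mu_{n_k}=\lim_k\mu_{n_k}=\mu_\infty$ settles it. Under the weaker assumption of Definition~\ref{defi:shrinking} ($T$ only uniformly continuous along fibers), $T_*$ may fail to be weak-$*$ continuous, and as emphasized in Remark~\ref{rema:APPV} the naive limit passage can fail. I would then argue in the spirit of the invariance step of the proof of Theorem~\ref{theo:lift}, exploiting the Wasserstein contraction \eqref{eq:lift}, the uniform modulus $\bar\omega_T$ along fibers, and the fiberwise transport of Lemma~\ref{lemm:vertical} to bound a quantity of the form $\wass(T_*\mu_\infty,T_*^{N+1}\mu_\infty)\le\bar\omega_T\bigl(\wass^{\check\mu}(\mu_\infty,T_*^N\mu_\infty)\bigr)$ and then pass $N\to\infty$, transferring the invariance of each $\mu_{n_k}$ along its own fibers to $\mu_\infty$ along the fibers of $\check\mu$. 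Once this is in hand, the subsubsequence bookkeeping to recover $\mu_n\to\mu$ from the cluster-point equality is routine.
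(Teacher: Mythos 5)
Your route is genuinely different from the paper's. The paper's proof is essentially two lines: bijectivity comes from Theorem~\ref{theo:lift}, continuity of $\pi_*$ is immediate, and then one invokes the general topology fact that a continuous bijection from a compact space to a Hausdorff space is automatically a homeomorphism, once one accepts that $\proba_T(X)$ is weak-$*$ compact. You instead rebuild this fact by hand via a cluster-point plus subsubsequence argument. Both routes pass through the same door: one must know that $\proba_T(X)$ is weak-$*$ closed in $\proba(X)$, i.e.\ that a weak-$*$ limit of $T$-invariant measures is $T$-invariant. The paper asserts this compactness without comment (it is standard and immediate when $T$ is continuous), whereas you explicitly identify it as ``the main obstacle'' and try to argue it from the shrinking-fiber hypothesis alone. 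That is a reasonable instinct, since Definition~\ref{defi:shrinking} never formally assumes $T$ continuous and Remark~\ref{rema:APPV} precisely warns against the naive limit passage; but your sketch for that step has a gap.

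Concretely, the inequality you write, $\wass(T_*\mu_\infty, T_*^{N+1}\mu_\infty) \le \bar\omega_T\bigl(\wass^{\check\mu}(\mu_\infty, T_*^N\mu_\infty)\bigr)$, is valid (both measures project to $\check\mu$ since $\check\mu$ is $S$-invariant), but it does not yield what you want. Letting $N\to\infty$, the left-hand side tends to $\wass(T_*\mu_\infty,\mu)$, while the right-hand side tends to $\bar\omega_T\bigl(\wass^{\check\mu}(\mu_\infty,\mu)\bigr)$, which you have no reason to believe vanishes: it is zero exactly when $\mu_\infty=\mu$, the very thing to be proved, so the argument is circular. More importantly, the $T$-invariance of each $\mu_{n_k}$ never actually enters your estimate: you cannot compare $\mu_\infty$ (projecting to $\check\mu$) with $\mu_{n_k}$ (projecting to $\check\mu_{n_k}\ne\check\mu$) in the fiberwise metric $\wass^{\check\mu}$, since $\wass^{\check\mu}$ is defined only between measures with the same projection. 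The invariance step in the proof of Theorem~\ref{theo:lift} works because everything there stays inside the single fiber $\pi_*^{-1}(\check\mu)$; your situation involves a sequence of measures living over different base measures, and the fiberwise machinery does not transfer across them. When $T$ is continuous you correctly observe that the whole issue evaporates in one line, but then the paper's ``compact to Hausdorff'' shortcut is cleaner and you are re-proving a standard topological fact in disguise. A small streamlining as well: once you know $\mu_\infty$ is $T$-invariant and $\pi_*\mu_\infty=\check\mu$, the uniqueness clause of Theorem~\ref{theo:lift} already forces $\mu_\infty=\mu$; the intermediate deduction via $T_*^N\mu_\infty\to\mu$ is unnecessary.
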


\begin{proof}
By Theorem \ref{theo:lift}, $\pi_*$ induces a bijection  $\proba_T(X) \to \proba_S(Y)$. Since $\pi_*$ is continuous and $\proba_T(X)$ is compact, this induced map is a homeomorphism.
\end{proof}

We shall denote by $\pi^* : \proba_S(Y) \to \proba_T(X)$ the inverse map of this homeomorphism (this notation is differential-geometric flavored: an index star denotes push-forward while an exponent star denotes pull-back).

%%%%%%%%%%%%%%%%%%%%%%%%%%%%%%%%%%%%%%%%%%%%%%%%%%%%%%%%%%%%%%%
\subsection{Stable leafs of invariant measures}

Imagine that one wishes to draw a random point $x\in X$ whose law is close to $\mu$. Using Theorem \ref{theo:lift}, one could draw a random point $y\in Y$ with law $\check\mu$, choose in any way (random or deterministic) an inverse image $x_0\in\pi^{-1}(y)$, and take $x=T^n(x_0)$ for some large $n$. However, one may not be able to draw $y$ with precisely the law $\check\mu$. One would hopefully be able to draw $y$ with a law very close to $\check\mu$, and still get that the law of $x$ is close to $\mu$. 

In other words, one asks for conditions on a probability measure $\nu\in\proba_T(X)$ ensuring that $T^n_*\nu$ converges to a given invariant measure $\mu$. This idea also connects with the construction of SRB measures by iteratively pushing forward the Lebesgue measure. 
Define the \emph{stable leaf} of an invariant measure $\mu$ by
\[ \stabL(\mu,T_*) = \big\{ \nu\in \proba(X) \,\big|\, T_*^n\nu \to \mu \big\} \]
where the convergence is in the weak-$*$ topology. Since we are concerned here with the relations between $T$ and $S$, the question is to relate $\stabL(\mu,T_*)$ with $\stabL(\check\mu, S_*)$.

\begin{lemma}\label{lemm:stabL}
Let $\check\mu\in\proba_S(Y)$ and assume that
\begin{itemize}
\item the fibers are $(\bar a_n)_n$-shrunk on average with respect to $\check\mu$,
\item $T$ is continuous, and let $\omega_{T^k}$ be a modulus of continuity of $T^k$, for each $k\in\mathbb{N}$,
\item $\pi$ induces a continuous fibration with modulus $\bar\omega_\pi$.
\end{itemize}
Then for all $k,\ell\in \mathbb{N}$ and all $\nu\in\proba(X)$ we have
\[ \wass(T_*^{k+\ell} \nu,\mu) \le \omega_{T^k}\circ\bar \omega_\pi\big(\wass(S_*^\ell\pi_*\nu,\check\mu)\big) + \bar a_k. \]
\end{lemma}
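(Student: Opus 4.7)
The plan is to use the triangle inequality with a cleverly chosen intermediate measure $\tilde\nu \in \pi_*^{-1}(\check\mu)$. Write
\[ \wass(T_*^{k+\ell}\nu,\mu) \le \wass(T_*^k(T_*^\ell\nu), T_*^k\tilde\nu) + \wass(T_*^k\tilde\nu,\mu). \]
Since $\pi_*\tilde\nu = \check\mu$, the Lifting Theorem (Theorem \ref{theo:lift}) immediately bounds the second term by $\bar a_k$. Everything thus reduces to constructing $\tilde\nu$ with $\pi_*\tilde\nu=\check\mu$ for which the first term is controlled by $\omega_{T^k}\circ\bar\omega_\pi(\wass(S_*^\ell\pi_*\nu,\check\mu))$.

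The construction uses the continuous fibration assumption. Set $\nu' := T_*^\ell\nu$, so $\pi_*\nu' = S_*^\ell\pi_*\nu$, and let $\gamma\in\proba(Y\times Y)$ be an optimal coupling of $\pi_*\nu'$ and $\check\mu$, with $\int d(y,y')\dd\gamma = \wass(S_*^\ell\pi_*\nu,\check\mu)$. Disintegrate $\nu' = \int \xi_y \dd(\pi_*\nu')(y)$. Applying the Measurable Selection Theorem (jointly in $(y,y')$) to Definition \ref{defi:fibration}, choose a measurable map $\tau:Y\times Y\times X\to X$ with $\tau(y,y',\cdot):\pi^{-1}(y)\to\pi^{-1}(y')$ and $d(x,\tau(y,y',x))\le\bar\omega_\pi(d(y,y'))$. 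Then define
\[ \tilde\nu := \int (\tau(y,y',\cdot))_*\xi_y \dd\gamma(y,y'). \]
A direct check using that $(\tau(y,y',\cdot))_*\xi_y$ is supported on $\pi^{-1}(y')$ shows $\pi_*\tilde\nu = \check\mu$.

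The map $x\mapsto(x,\tau(y,y',x))$ pushes $\xi_y$ to a coupling of $\xi_y$ and $(\tau(y,y',\cdot))_*\xi_y$; integrating against $\gamma$ produces a coupling $\Pi\in\Gamma(\nu',\tilde\nu)$ whose cost is
\[ \iint d(x,\tau(y,y',x)) \dd\xi_y(x) \dd\gamma(y,y') \le \int \bar\omega_\pi(d(y,y')) \dd\gamma(y,y') \le \bar\omega_\pi(\wass(S_*^\ell\pi_*\nu,\check\mu)), \]
the last step by concavity of $\bar\omega_\pi$ and Jensen's inequality. Pushing $\Pi$ forward by $(T^k,T^k)$ yields a coupling of $T_*^k\nu'$ and $T_*^k\tilde\nu$ of cost at most
\[ \int \omega_{T^k}(d(x,x'))\dd\Pi(x,x')\le \omega_{T^k}\!\left(\int d(x,x')\dd\Pi\right)\le \omega_{T^k}\!\big(\bar\omega_\pi(\wass(S_*^\ell\pi_*\nu,\check\mu))\big), \]
again by concavity of $\omega_{T^k}$ and Jensen's inequality. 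Combining both bounds gives the statement.

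The only real technical point is the joint measurability of $(y,y',x)\mapsto\tau(y,y',x)$, upgrading the per-pair measurable selection in Definition \ref{defi:fibration}; this is a standard application of measurable selection to the closed-valued multifunction $(y,y',x)\mapsto \{x'\in\pi^{-1}(y')\mid d(x,x')\le\bar\omega_\pi(d(y,y'))\}$, which is nonempty by hypothesis. Everything else is a direct use of the concavity of moduli of continuity (hence Jensen) and the first part of the Lifting Theorem.
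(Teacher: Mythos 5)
Your proof is correct and follows essentially the same route as the paper: construct an intermediate measure in $\pi_*^{-1}(\check\mu)$ by transporting the disintegration of $T_*^\ell\nu$ along fibers using the maps $\tau_y^{y'}$, bound its Wasserstein distance to $T_*^\ell\nu$ via the optimal coupling of the projections and Jensen's inequality, then push through $T^k$ and apply the triangle inequality together with Theorem~\ref{theo:lift}. The only presentational difference is that the paper isolates the construction of the intermediate measure as a standalone preliminary estimate; your remark on the joint measurability of $\tau$ is a fair (and correct) technical point that the paper leaves implicit.
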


\begin{proof}
We first prove that given any $\mu_0\in\proba(X)$ and any $\check\mu_1\in\proba(Y)$, there exist $\mu_1\in \pi_*^{-1}(\check\mu_1)$ such that
\[\wass(\mu_0,\mu_1) \le \bar\omega_\pi\big(\wass(\pi_*\mu_0,\check\mu_1)\big).\]
Let $\check\gamma$ be an optimal transport plan from $\pi_*\mu_0$ to $\check\mu_1$ and $(\xi_y)_{y\in Y}$ be the disintegration of $\mu_0$ with respect to $\pi$. Recall that $\tau_y^{y'}$ is a measurable map from $\pi{-1}(y)$ to $\pi^{-1}(y')$ such that $d(x,\tau_y^{y'}(x))\le \bar\omega_\pi(d(y,y')$. We define a measure on $X\times X$ by
$\gamma = \int (\Id,\tau_y^{y'})_*\xi_y \dd\check\gamma(y,y')$, i.e. for $f:X\times X\to\mathbb{R}$:
\[\gamma(f) = \iint f(x,\tau_y^{y'}(x)) \dd\xi_y(x) \dd\check\gamma(y,y').\]
The first marginal of $\gamma$ is $\mu_0$, since when $f$ only depends on its first argument
\[\gamma(f) = \iint f(x) \dd\xi_y(x) \dd\check\gamma(y,y') = \int f(x) \dd\xi_y(x) \dd\pi_*\mu_0(y) = \mu_0(f).\]
Let $\mu_1$ be the second marginal of $\gamma$; then $\pi_*\mu_1=\check\mu_1$ since when $f(x,x') = g(\pi(x'))$
\[\gamma(f) = \iint g(y') \dd\xi_y(x) \dd\check\gamma(y,y') = \int g(y') \dd\check\gamma(y,y')=\check\mu_1(g).\]
We get
\begin{multline*}
\wass(\mu_0,\mu_1) \le \int d(x,x') \dd\gamma(x,x')
  \le \int d(x,\tau_y^{y'}(x)) \dd\xi_y(x) \dd\check\gamma(y,y') \\
  \le \int \bar\omega_\pi(d(y,y')) \dd\check\gamma(y,y')
  \le \bar\omega_\pi\Big( \int d(y,y') \dd\check\gamma(y,y')\Big) = \bar\omega_\pi\big(\wass(\pi_*\mu_0, \check\mu_1)\big).
\end{multline*}

We now apply this with $\mu_0=T_*^\ell\nu$ and $\check\mu_1=\check\mu$: there exist $\mu_1\in \pi_*^{-1}(\check\mu)$ such that $\wass(T_*^\ell\nu,\mu_1)\le \bar\omega_\pi\big(\wass(\pi_*T_*^\ell\nu,\check\mu)\big)$. Since $\pi_*T_*^\ell\nu = S_*^\ell\pi_*\nu$ and $T_*^k\mu=\mu$, we get
\begin{align*}
\wass(T_*^{k+\ell} \nu,\mu) 
  &\le \wass(T_*^{k+\ell}\nu,T_*^k\mu_1) + \wass(T_*^k\mu_1,\mu) \\
  &\le \omega_{T^k}\big(\wass(T_*^\ell\nu,\mu_1) \big) + \bar a_k \\
  &\le \omega_{T^k} \circ \bar\omega_\pi\big(\wass(S_*^\ell\pi_*\nu,\check\mu)\big) + \bar a_k.
\end{align*}
\end{proof}

\begin{corollary}\label{coro:StableLeaf}
If $T$ is continuous, $\pi$ induces a continuous fibration, and fibers are shrunk on average with respect to $\check\mu$, then $\stabL(\mu,T_*) = \pi_*^{-1}\big(\stabL(\check\mu,S_*)\big)$.
\end{corollary}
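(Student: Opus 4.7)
The plan is to prove the equality $\stabL(\mu,T_*) = \pi_*^{-1}\bigl(\stabL(\check\mu,S_*)\bigr)$ by showing both inclusions separately, using continuity of $\pi_*$ for one direction and Lemma \ref{lemm:stabL} for the other.

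For the forward inclusion $\stabL(\mu,T_*) \subset \pi_*^{-1}(\stabL(\check\mu,S_*))$, I would use the fact that $\pi_*:\proba(X)\to\proba(Y)$ is continuous in the weak-$*$ topology (since $\pi$ is continuous and $X$ is compact). Given $\nu\in\stabL(\mu,T_*)$, i.e.\ $T_*^n\nu\to\mu$, we apply $\pi_*$ and use the intertwining $\pi T = S\pi$ (hence $\pi_* T_*^n = S_*^n \pi_*$) to obtain $S_*^n\pi_*\nu \to \pi_*\mu = \check\mu$; thus $\pi_*\nu\in\stabL(\check\mu,S_*)$.

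For the backward inclusion, suppose $\nu\in\proba(X)$ satisfies $\pi_*\nu\in\stabL(\check\mu,S_*)$, meaning $\wass(S_*^\ell\pi_*\nu,\check\mu)\to 0$ as $\ell\to\infty$. Fix $\varepsilon>0$. I would first choose $k$ large enough that $\bar a_k<\varepsilon/2$. With this $k$ now fixed, the function $\omega_{T^k}\circ\bar\omega_\pi$ is a composition of two moduli of continuity, hence continuous at $0$ with value $0$; therefore there exists $\ell_0$ such that for every $\ell\ge\ell_0$,
\[
\omega_{T^k}\circ\bar\omega_\pi\bigl(\wass(S_*^\ell\pi_*\nu,\check\mu)\bigr)<\varepsilon/2.
\]
Applying Lemma \ref{lemm:stabL} with this $k$ and any $\ell\ge\ell_0$ yields $\wass(T_*^{k+\ell}\nu,\mu)<\varepsilon$, which gives $T_*^n\nu\to\mu$ along $n=k+\ell\to\infty$, i.e.\ $\nu\in\stabL(\mu,T_*)$.

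There is no real obstacle: the estimate provided by Lemma \ref{lemm:stabL} is precisely tailored for a two-parameter argument where one parameter ($k$) controls the fiber-shrinking residual and the other ($\ell$) controls the base-space convergence, and it decouples these two issues cleanly. The only subtle point worth emphasizing in the write-up is that $k$ must be fixed \emph{before} choosing $\ell_0$, since the continuity modulus $\omega_{T^k}$ depends on $k$ and could degenerate as $k\to\infty$.
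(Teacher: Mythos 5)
Your proof is correct and follows the same route as the paper's: the forward inclusion via continuity of $\pi_*$ and the intertwining relation, and the backward inclusion via Lemma \ref{lemm:stabL}, fixing $k$ first to control the $\bar a_k$ residual and then choosing $\ell_0$ so that the base-space convergence term is small. Your explicit remark that $k$ must be fixed before $\ell_0$ (since $\omega_{T^k}$ depends on $k$) is exactly the point the paper's proof also respects, just stated there by introducing the threshold $\eta$ after $k$ is chosen.
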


\begin{proof}
If $\nu\in \stabL(\mu,T_*)$, then $S_*^n\pi_*\nu=\pi_*T_*^n\nu\to \pi_*\mu = \check\mu$, so that $\nu\in \pi_*^{-1}(\stabL(\check\mu,S_*))$.

Assume now that $\nu\in \proba(X)$ is such that $S_*^n\pi_*\nu\to\check\mu$ and let $\varepsilon>0$. Choose $k$ such that $a_k\le \varepsilon/2$; then there exists $\eta>0$ such that $r\in [0,\eta]\implies \omega_{T^k}\circ\bar\omega_\pi(r) \le \varepsilon/2$. Choose $\ell_0$ such that for all $\ell\ge \ell_0$, $\wass(S_*^\ell\pi_*\nu,\check\mu)\le \eta$ and apply Lemma \ref{lemm:stabL}: for all $n\ge k+\ell_0$, we have $\wass(T_*^n\nu,\mu)\le \varepsilon$.
\end{proof}

\begin{proof}[Proof of Theorem \ref{theo:mainSG}]
According to the statement to be proved, we assume that $S$ has a conformal measure $\lambda_Y$ and that the corresponding transfer operator $\chop{L}$ has a spectral gap on some Banach space of functions $(\Banach,\lVert\cdot\rVert)$ (see definition \ref{defi:SG}), with eigenfunction $h$ (normalized by $\lambda_Y(h)=1$).
Let $\nu\in\proba(X)$ such that $\pi_*\nu = f\dd\lambda_Y$ with $f\in \Banach$, and observe that $\lambda_Y(f)=1$, so that we can write $f = h+\bar f$ where $\lambda_Y(\bar f)=0$. We have $S_*^n(f\dd\lambda_y) = (h + \chop{L}^n \bar f) \dd\lambda_Y$ and 
\[\lVert \chop{L}^n \bar f \rVert_{L^1(\lambda_y)} \le  \lVert\chop{L}^n \bar f \rVert \le C(1-\delta)^n\]
for some $\delta\in(0,1)$. Since $\diam Y\le 1$, the Wasserstein metric is not greater than the total variation distance (take a transport plan that leaves the common mass in place, and moves the remaining mass arbitrarily), so that
\[\wass(h\dd\lambda_Y,(h+\chop{L}^n\bar f)\dd\lambda_Y) \le \lVert \chop{L}^n(\bar f) \dd\lambda_Y \rVert_{TV} = \lVert \chop{L}^n \bar f\rVert_{L^1(\lambda_Y)} \le C(1-\delta)^n.\]

By hypothesis there is some $\theta\in(0,1)$ such that $a_n\le C \theta^n$ for all $n\in\mathbb{N}$. Applying Lemma \ref{lemm:stabL} and denoting by $L$ the Lipschitz constant of $T$ and by $\bar\omega_\pi(r) =: K r^\alpha$ the modulus of continuity of the fibration induced by $\pi$, we get for all $k,\ell\in\mathbb{N}$:
\begin{equation}\wass(T_*^{k+\ell}\nu, \mu) \le L^k K\big(\wass(h\dd\lambda_Y,(h+\chop{L}^\ell\bar f)\dd\lambda_Y)\big)^\alpha + a_k \le  C L^k (1-\delta)^{\alpha \ell} + C\theta^k.
\label{eq:stabL}
\end{equation}
Take $\beta\in(0,1)$ such that 
\[ \beta<\frac{\alpha\log\frac{1}{1-\delta}}{\log L+\alpha\log\frac{1}{1-\delta}}\]
and define two integer sequences such that $k_n=\beta n +O(1)$, $\ell_n=(1-\beta)n+O(1)$ and $n=k_n+\ell_n$. Applying \eqref{eq:stabL} yields
$\wass(T_*^n\nu,\mu) \le C \eta^n$ for some $\eta\in(0,1)$.
\end{proof}

%%%%%%%%%%%%%%%%%%%%%%%%%%%%%%%%%%%%%%%%%%%%%%%%%%%%%%%%%%%%%%%
%%%%%%%%%%%%%%%%%%%%%%%%%%%%%%%%%%%%%%%%%%%%%%%%%%%%%%%%%%%%%%%
\section{Preserved properties of lifted invariant measures}\label{sec:properties}

\begin{assumption}
From now on the map $T$ is assumed to be an extension of $S$ with shrinking fibers.
\end{assumption}
This assumption shall remain active until the end of the article, and we shall only restate it when we want to specify the rate of shrinking or for the most important results.

With the uniqueness of the $T$-invariant lift of each $S$-invariant measure comes naturally the problem of which special properties of invariant measures are preserved under lifting (we shall later be specifically concerned with statistical properties). Theorem \ref{theo:mainLift} is the concatenation of Theorem \ref{theo:lift} with the main results of the present Section. We shall use the material of preliminary subsections \ref{sec:def-disintegration}, \ref{sec:def-physicality} and \ref{sec:def-transfer}.

%%%%%%%%%%%%%%%%%%%%%%%%%%%%%%%%%%%%%%%%%%%%%%%%%%%%%%%%%%%%%%%
\subsection{Ergodicity and mixing}\label{sec:mixing}

It is known that ergodicity is preserved by the lift map $\pi^*$, see \cite{araujo2009singular} and \cite{butterley2017disintegration}. We give an alternative proof, taking advantage of uniqueness in Theorem \ref{theo:lift}.

\begin{proposition}\label{prop:ergodic}
For all $\mu\in\proba_T(X)$, $\mu$ is ergodic if and only if $\pi_*\mu$ is ergodic.
\end{proposition}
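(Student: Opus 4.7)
The plan is to prove the two implications separately, the nontrivial one exploiting the uniqueness part of Theorem \ref{theo:lift}.

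For the easy direction, assume $\mu$ is ergodic and take any $S$-invariant Borel set $B\subset Y$, i.e. $S^{-1}(B)=B$ modulo a $\check\mu$-null set. The semiconjugacy $\pi T=S\pi$ gives $T^{-1}(\pi^{-1}(B))=\pi^{-1}(S^{-1}(B))=\pi^{-1}(B)$ modulo a $\mu$-null set (since $\mu(\pi^{-1}(N))=\check\mu(N)=0$ for any $\check\mu$-null $N$). So $\pi^{-1}(B)$ is $T$-invariant modulo $\mu$, and $\mu(\pi^{-1}(B))=\check\mu(B)$; ergodicity of $\mu$ forces this value to be $0$ or $1$, hence $\check\mu$ is ergodic.

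For the converse, assume $\check\mu$ is ergodic and suppose toward a contradiction that $\mu$ is not. Then there is a $T$-invariant Borel set $A\subset X$ with $\mu(A)\in(0,1)$. Define the conditional probability measures $\mu_A:=\mu(\,\cdot\cap A)/\mu(A)$ and $\mu_{A^c}:=\mu(\,\cdot\cap A^c)/\mu(A^c)$; both are $T$-invariant (because $A$ and $A^c$ are), and one has the convex decomposition
\[\mu=\mu(A)\,\mu_A+\mu(A^c)\,\mu_{A^c}.\]
Pushing forward by $\pi$ yields the $S$-invariant decomposition
\[\check\mu=\mu(A)\,\pi_*\mu_A+\mu(A^c)\,\pi_*\mu_{A^c}.\]
Since an ergodic measure is an extreme point of the simplex of invariant probability measures, $\check\mu$ cannot be written nontrivially in this form unless $\pi_*\mu_A=\pi_*\mu_{A^c}=\check\mu$. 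But then $\mu_A$ and $\mu_{A^c}$ are both $T$-invariant probability measures with the same projection $\check\mu$, so Theorem \ref{theo:lift} (uniqueness of the lift) forces $\mu_A=\mu_{A^c}=\mu$; this contradicts $\mu_A(A)=1\ne\mu(A)$.

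The only real obstacle here is to make sure that the standard ergodic-theoretic facts used (ergodicity $\Leftrightarrow$ extremality in $\proba_T(X)$, and $T$-invariance of conditional measures $\mu_A$ when $A$ is $T$-invariant) are readily available; both are classical and require no further hypotheses on $T$ beyond measurability. Everything else reduces to the already-established uniqueness of the lift.
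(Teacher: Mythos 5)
Your proof is correct, and it is essentially equivalent to the paper's in spirit, but organized differently. The paper proves the proposition in one stroke by observing that $\pi_*$ restricts to an \emph{affine homeomorphism} $\proba_T(X)\to\proba_S(Y)$ (Corollary~\ref{coro:homeo}) and that ergodic measures are exactly the extreme points of the simplex of invariant measures, so extremality is transported both ways by the affine bijection; both directions are then the same argument. You instead treat the two directions separately. For the easy direction (``$\mu$ ergodic $\Rightarrow\check\mu$ ergodic'') you give the elementary invariant-set argument, which uses nothing beyond the semiconjugacy $\pi T=S\pi$ and holds for \emph{any} topological factor --- no shrinking of fibers is needed; this is slightly more information than what the paper's proof shows, since you isolate which direction is special to the shrinking-fiber hypothesis. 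For the hard direction you run essentially the paper's extremality argument, just made concrete via the conditional measures $\mu_A,\mu_{A^c}$ and phrased as a contradiction rather than directly; the crucial input is exactly the same, namely that $\pi_*$ is injective on $\proba_T(X)$ by the uniqueness in Theorem~\ref{theo:lift}. One minor stylistic point: the explicit decomposition $\mu=\mu(A)\mu_A+\mu(A^c)\mu_{A^c}$ is a nice way to witness non-extremality and makes the final contradiction ($\mu_A(A)=1\neq\mu(A)$) completely concrete, whereas the paper just asserts $\mu_0\neq\mu_1$ and uses injectivity of $\pi_*$ directly to conclude $\pi_*\mu_0\neq\pi_*\mu_1$. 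Both are valid; the paper's is shorter and symmetric, yours is more self-contained for a reader less fluent in the convex-geometric formulation.
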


\begin{proof}
This follows from $\pi_*$ being an affine map inducing a homeomorphism $\proba_T(X)\to\proba_S(Y)$ (Corollary \ref{coro:homeo}), since ergodic measures are the extremal points of the convex set of invariant measures.

Assume indeed $\mu$ is not ergodic: then it can be written $\mu= p\mu_0+(1-p) \mu_1$ where $\mu_0\neq\mu_1\in\proba_T(X)$ and $p\in(0,1)$. The three measures $\pi_*\mu$, $\pi_*\mu_0$ and $\pi_*\mu_1$ are $S$-invariant and satisfy $\pi_*\mu = p\pi_*\mu_0+(1-p)\pi_*\mu_1$. Moreover, $\pi_*\mu_0\neq\pi_*\mu_1$ and thus $\pi_*\mu$ is not ergodic.
If $\pi_*\mu$ is not ergodic, then similarly a decomposition lifts and $\mu$ is not ergodic either.
\end{proof}

\begin{proposition}\label{prop:weakly}
A measure $\mu\in\proba_T(X)$ is weakly mixing if and only if $\pi_*\mu$ is.\end{proposition}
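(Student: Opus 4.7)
The plan is to reduce weak mixing to ergodicity of the product system, where Proposition \ref{prop:ergodic} already applies. Recall that $\mu\in\proba_T(X)$ is weakly mixing if and only if $\mu\otimes\mu$ is ergodic for $T\times T:X\times X\to X\times X$, and similarly $\pi_*\mu$ is weakly mixing if and only if $\pi_*\mu\otimes\pi_*\mu$ is ergodic for $S\times S$ on $Y\times Y$.

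The first step is to verify that $T\times T$ is itself an extension of $S\times S$ with shrinking fibers, with factor map $\pi\times\pi$. This is entirely straightforward: endow $X\times X$ and $Y\times Y$ with one of the standard product metrics (say $d((x,\tilde x),(x',\tilde x'))=d(x,x')+d(\tilde x,\tilde x')$). A fiber of $\pi\times\pi$ over $(y,\tilde y)$ is the product $\pi^{-1}(y)\times \pi^{-1}(\tilde y)$, so its $(T\times T)^n$-image has diameter at most $2 a_n$, and uniform continuity of $T$ along fibers transfers to $T\times T$ with modulus $r\mapsto 2\bar\omega_T(r)$. Hence all hypotheses of Theorem \ref{theo:lift} and of Proposition \ref{prop:ergodic} are satisfied by the product system.

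The second step is to note that $(\pi\times\pi)_*(\mu\otimes\mu)=\pi_*\mu\otimes\pi_*\mu$, so by the uniqueness part of Theorem \ref{theo:lift} applied to $T\times T$, the measure $\mu\otimes\mu$ is the unique $(T\times T)$-invariant lift of $\pi_*\mu\otimes\pi_*\mu$. Proposition \ref{prop:ergodic} applied to the product system then gives the equivalence
\[\mu\otimes\mu \text{ is } (T\times T)\text{-ergodic} \iff \pi_*\mu\otimes\pi_*\mu \text{ is } (S\times S)\text{-ergodic},\]
which is exactly the desired equivalence of weak mixing for $\mu$ and $\pi_*\mu$.

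I do not expect any real obstacle here: the only thing to be careful about is that the product construction preserves the ``shrinking fibers'' hypothesis quantitatively, which it does up to a harmless constant factor depending on the chosen product metric. The conclusion follows by invoking the already-established Proposition \ref{prop:ergodic} on the product system.
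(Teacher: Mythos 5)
Your proof is correct and follows essentially the same route as the paper: reduce weak mixing to ergodicity of the product system, check that $T\times T$ is an extension of $S\times S$ with shrinking fibers (the paper uses the $\ell^\infty$ product metric, you use $\ell^1$, a cosmetic difference), and invoke Proposition \ref{prop:ergodic}. The only stylistic remark is that the explicit appeal to uniqueness from Theorem \ref{theo:lift} is unnecessary for the equivalence: Proposition \ref{prop:ergodic} applies directly to any $(T\times T)$-invariant measure, in particular to $\mu\otimes\mu$, whose pushforward by $\pi\times\pi$ is $\pi_*\mu\otimes\pi_*\mu$.
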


\begin{proof}
That $\check\mu:=\pi_*\mu$ is weakly mixing is equivalent to $\check\mu\otimes\check\mu$ being ergodic for the diagonal action $S\times S$ on $Y\times Y$ (see e.g. \cite{walters1982introduction}, Theorem 1.24).

The map $T\times T$ is an extension of $S\times S$ with factor map $\pi\times \pi : X\times X \to Y\times Y$ and fibers $(\pi\times \pi)^{-1}(y_0,y_1)=\pi^{-1}(y_0) \times \pi^{-1}(y_1)$. If we endow products with the $\ell^\infty$ combined metric, e.g. $d((x_0,x_1),(x'_0,x'_1)) := \max(d(x_0,x'_0), d(x_1,x'_1))$, then the diameter of $(T\times T)^n([(x,x'])$ (where we recall that $[\cdot]$ denotes fibers) is the maximum diameter of $T^n([x])$, $T^n([x'])$, so that  $T\times T$ is an extension of $S\times S$ with shrinking fibers. 

By Proposition \ref{prop:ergodic}, the ergodicity of $\check\mu\otimes\check\mu$ is equivalent to the ergodicity of its lift $\mu\otimes \mu$, and thus $\check\mu$ is weakly mixing if and only if $\mu$ is.
\end{proof}

We now turn to strong mixing; recall that $\mu$ is said to be \emph{strongly mixing} if for all $f,g\in L^2(\mu)$, $\corr{n}{\mu}(f,g) \to 0$ as $n\to\infty$. We shall relate observables $f,g: X\to\mathbb{R}$ to observables on $Y$, which amounts to construct observables that are constant along fibers. As stressed in \cite{butterley2017disintegration}, a natural solution is to use average along the disintegration $(\xi_y)_{y\in Y}$ of $\mu$ with respect to $\check \mu$ (the Disintegration Theorem is recalled above as Proposition \ref{prop:disintegration}). Given a Borel function $f: X\to \mathbb{R}$, we define $\xi(f) : Y \to \mathbb{R}$ by $\xi(f)(y) = \xi_y(f)$ and $\tilde f=\xi(f)\circ \pi$. In this way, $\tilde f$ is an observable on $X$ which is constant on fibers; moreover 
\[\int \tilde f \dd\mu = \int \xi(f) \dd\check\mu = \int f \dd\mu.\] 
By convexity, $\xi(f)^p \le \xi(f^p)$, so that  $f \in L^p(\mu) \implies \xi(f) \in L^p(\check\mu)$ for all $p\in[1,\infty]$.
It is obvious that for all $u\in L^p(\check\mu)$ and $v\in L^{p'}(\check\mu)$ (where $1/p+1/p'=1$, possibly $\{p,p'\}=\{1,\infty\}$), we have
$\corr{n}{\mu}(u\circ \pi, v\circ \pi) = \corr{n}{\check\mu}(u,v)$. We will need a slightly stronger observation.
\begin{lemma}\label{lemm:CorrLift}
If $u\in L^p(\check\mu)$ and $g\in L^{p'}(\mu)$, then
$\corr{n}{\mu}(u\circ \pi, g) = \corr{n}{\check\mu}(u,\xi(g))$.
\end{lemma}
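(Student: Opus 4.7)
The plan is a direct computation: rewrite everything in terms of $\check\mu$ using the semiconjugacy relation $\pi\circ T = S\circ\pi$ and the characteristic property of the disintegration. Since $u\circ\pi\circ T^n = u\circ S^n\circ\pi$ is \emph{constant along fibers} $\pi^{-1}(y)$ (with value $u(S^n y)$), it factors out of the inner integration provided by the disintegration.

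More precisely, first I would apply the definition of the disintegration $(\xi_y)_{y\in Y}$ of $\mu$ to the integral $\int (u\circ\pi)\circ T^n\cdot g\,\dd\mu$: writing $(u\circ\pi)\circ T^n = u\circ S^n\circ\pi$, we get
\[
\int u\circ S^n\circ \pi \cdot g\,\dd\mu
 = \int \xi_y\big((u\circ S^n\circ\pi)\cdot g\big)\,\dd\check\mu(y)
 = \int u(S^n y)\,\xi_y(g)\,\dd\check\mu(y)
 = \int u\circ S^n\cdot \xi(g)\,\dd\check\mu.
\]
Next, the same reasoning applied to the constant function $1$ in place of $u\circ S^n$ yields $\int u\circ\pi\,\dd\mu = \int u\,\dd\check\mu$ and $\int g\,\dd\mu = \int \xi(g)\,\dd\check\mu$. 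Subtracting and taking absolute values gives exactly $\corr{n}{\check\mu}(u,\xi(g))$.

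The only point that deserves a short sanity check is integrability, which is the possible obstacle but is mild: by H\"older's inequality (with the convention $\{p,p'\}=\{1,\infty\}$ when relevant) and the estimate $|\xi(g)|^{p'}\le \xi(|g|^{p'})$ recalled above, we have $\xi(g)\in L^{p'}(\check\mu)$, while $u\circ S^n\in L^p(\check\mu)$ by $S$-invariance of $\check\mu$; hence the product $u\circ S^n\cdot \xi(g)$ and the product $u\circ S^n\circ\pi\cdot g$ are both integrable and Fubini-type manipulations through the disintegration are justified. No further subtlety is required, so the lemma follows.
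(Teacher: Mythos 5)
Your proof is correct and takes essentially the same route as the paper: you use the disintegration to pull the fiber-constant factor $u\circ S^n\circ\pi$ out of the inner integral, which is exactly the paper's argument (the paper proves the base identity $\int u\circ\pi\cdot g\,\dd\mu=\int u\cdot\xi(g)\,\dd\check\mu$ and then applies it with $u\circ S^n$ in place of $u$). Your extra remark on integrability is a harmless elaboration the paper leaves implicit.
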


\begin{proof}
Since for all $y\in Y$, $\xi_y$ is supported on $\pi^{-1}(y)$, we have
\[
\int u\circ\pi \cdot g \dd\mu 
  = \int \Big( \int u \circ \pi(x) \cdot g(x) \dd\xi_y(x) \Big) \dd\check\mu(y) 
  = \int u \cdot \xi(g) \dd\check\mu.
\]
Applying this to $u\circ\pi\circ T^n=u\circ S^n \circ \pi$ we get the desired result.
\end{proof}

The next lemma is inspired by \cite{avila2006exponential} (Lemma 8.2), and shall be used immediately to prove that the strong mixing property lifts to extensions with shrinking fibers, and reused later to study rates of decay of correlations.
\begin{lemma}\label{lemm:CorrBound}
Assume that the fibers are $(a_n)_n$-shrinking, let $\mu\in\proba_T(X)$ and $\check\mu=\pi_*\mu$ and let $f,g:X\to \mathbb{R}$ be two observables with $f$ continuous of modulus $\omega$ and $g\in L^1(\mu)$. For all $k,m\in\mathbb{N}$ we have
\[\corr{k+m}{\mu}(f,g) \le \corr{m}{\check\mu}\big(\xi(f\circ T^k),\xi(g) \big) + \omega(a_k)\lVert g\rVert_{L^1(\mu)}.\]
\end{lemma}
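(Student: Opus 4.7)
The plan is to exploit that $F_k := f\circ T^k$ is nearly constant along fibers. Indeed, whenever $\pi(x)=\pi(x')$, the shrinking hypothesis together with the modulus of continuity of $f$ give
\[\lvert F_k(x)-F_k(x')\rvert \le \omega(d(T^k x, T^k x')) \le \omega(a_k).\]
Let $\tilde F_k := \xi(F_k)\circ \pi$ denote the fiberwise average of $F_k$, where $(\xi_y)_{y\in Y}$ is the disintegration of $\mu$ with respect to $\pi$. Since each $\xi_y$ is supported in $\pi^{-1}(y)$, the previous bound yields $\lVert F_k-\tilde F_k\rVert_\infty \le \omega(a_k)$. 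Moreover, by the defining property of disintegration, $\int \tilde F_k\dd\mu = \int \xi(F_k)\dd\check\mu = \int F_k\dd\mu$.

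Next, using $T$-invariance of $\mu$, I rewrite $\corr{k+m}{\mu}(f,g) = \corr{m}{\mu}(F_k, g)$ and split $F_k = \tilde F_k + (F_k-\tilde F_k)$ inside the correlation. In the product part the perturbation vanishes thanks to $\int F_k\dd\mu = \int \tilde F_k\dd\mu$, so that the only extra contribution comes from the covariance part and is controlled by
\[\Big\lvert\int (F_k-\tilde F_k)\circ T^m \cdot g\dd\mu\Big\rvert \le \lVert F_k-\tilde F_k\rVert_\infty \lVert g\rVert_{L^1(\mu)} \le \omega(a_k)\lVert g\rVert_{L^1(\mu)}.\]

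Finally, for the main term, observe that $\tilde F_k\circ T^m = (\xi(F_k)\circ S^m)\circ\pi$ because $\pi T = S\pi$. Applying Lemma \ref{lemm:CorrLift} with $u := \xi(F_k)\circ S^m$ (or recomputing directly from the fact that $\xi_y$ is concentrated on $\pi^{-1}(y)$), one obtains
\[\int \tilde F_k\circ T^m \cdot g\dd\mu = \int \xi(F_k)\circ S^m \cdot \xi(g)\dd\check\mu,\]
while $\int \tilde F_k\dd\mu \int g\dd\mu = \int \xi(F_k)\dd\check\mu \int \xi(g)\dd\check\mu$. Subtracting, the difference is exactly $\corr{m}{\check\mu}(\xi(F_k), \xi(g)) = \corr{m}{\check\mu}(\xi(f\circ T^k), \xi(g))$, which combined with the preceding estimate yields the claim. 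I do not foresee any serious obstacle: the only point that deserves care is the cancellation in the product part, which hinges precisely on $\xi$ preserving the $\mu$-integral, and the rest is bookkeeping on top of Lemma \ref{lemm:CorrLift}.
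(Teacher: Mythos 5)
Your proof is correct and follows essentially the same route as the paper: approximate $F_k = f\circ T^k$ by its fiberwise average $\tilde F_k = \xi(F_k)\circ\pi$ (which is $\omega(a_k)$-close in sup norm), control the resulting error, and then apply Lemma~\ref{lemm:CorrLift} to pass from $\corr{m}{\mu}(\tilde F_k,g)$ to $\corr{m}{\check\mu}(\xi(F_k),\xi(g))$. The only cosmetic difference is that the paper normalizes $\mu(f)=0$ at the outset so that all the product terms in the correlations vanish, whereas you keep $f$ unnormalized and instead observe that $\int F_k\,\dd\mu = \int \tilde F_k\,\dd\mu$, which makes the product parts cancel; the two arguments are equivalent. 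One minor slip: you should invoke Lemma~\ref{lemm:CorrLift} with $u:=\xi(F_k)$ (and correlation index $m$), not with $u:=\xi(F_k)\circ S^m$; the identity $\tilde F_k\circ T^m = (\xi(F_k)\circ S^m)\circ\pi$ is what happens \emph{inside} the proof of that lemma, not the hypothesis you feed into it — but your parenthetical ``or recompute directly'' shows you understand this, so it is not a gap.
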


\begin{proof}
Up to adding a constant we assume $\mu(f)=0$. For each $y\in Y$, 
\[\sup_{\pi^{-1}(y)} f\circ T^{k} -\inf_{\pi^{-1}(y)} f\circ T^{k} < \omega(a_k).\]
After integration with respect to $\xi_y$, we obtain that $(f\circ T^{k})\etilde := \xi(f\circ T^k)\circ\pi$ and $f\circ T^{k}$ are $\omega(a_k)$-close in the uniform norm, so that
\begin{equation*}
\corr{k+m}{\mu}(f,g) = \corr{m}{\mu}(f\circ T^{k},g)
  \le \corr{m}{\mu}((f\circ T^k)\etilde,g) + \omega(a_k) \lVert g\rVert_{L^1(\mu)}.
\end{equation*}
Applying Lemma \ref{lemm:CorrLift} we get $\corr{m}{\mu}((f\circ T^k)\etilde,g) = \corr{m}{\check\mu}\big(\xi(f\circ T^k),\xi(g)\big)$.
\end{proof}

\begin{proposition}
A measure $\mu\in\proba_T(X)$ is strongly mixing if and only if $\pi_*\mu$ is.
\end{proposition}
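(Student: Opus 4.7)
The plan is to handle the two implications separately, using Lemma \ref{lemm:CorrBound} as the key input for the nontrivial direction. For the easy direction, if $\mu$ is strongly mixing and $u,v\in L^2(\check\mu)$, then $u\circ\pi, v\circ\pi\in L^2(\mu)$ and a direct change of variable gives $\corr{n}{\mu}(u\circ\pi, v\circ\pi) = \corr{n}{\check\mu}(u,v)$, which tends to $0$; hence $\check\mu$ is strongly mixing.

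For the reverse direction, assume $\check\mu$ is strongly mixing. I would first establish the decay of $\corr{n}{\mu}(f,g)$ when $f$ is continuous (with some modulus $\omega$) and $g\in L^2(\mu)\subset L^1(\mu)$. Given $\varepsilon>0$, pick $k$ large enough that $\omega(a_k)\lVert g\rVert_{L^1(\mu)} < \varepsilon/2$, which is possible since $a_k\to 0$ and $\omega(0)=0$. With this $k$ fixed, $\xi(f\circ T^k)$ is bounded and hence lies in $L^2(\check\mu)$, while $\xi(g)\in L^2(\check\mu)$ by the convexity inequality $\xi(g)^2\le \xi(g^2)$ noted above. Strong mixing of $\check\mu$ then yields $\corr{m}{\check\mu}(\xi(f\circ T^k),\xi(g))<\varepsilon/2$ for all $m$ large enough, and Lemma \ref{lemm:CorrBound} gives $\corr{k+m}{\mu}(f,g) < \varepsilon$.

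To pass from continuous $f$ to arbitrary $f\in L^2(\mu)$, I would use density of $\C(X)$ in $L^2(\mu)$ (valid because $X$ is a compact metric space) together with the $T$-invariance of $\mu$, which guarantees $\lVert f\circ T^n\rVert_{L^2(\mu)}=\lVert f\rVert_{L^2(\mu)}$. By Cauchy--Schwarz,
\[ \lvert \corr{n}{\mu}(f,g) - \corr{n}{\mu}(f',g)\rvert \le 2\,\lVert f-f'\rVert_{L^2(\mu)}\,\lVert g\rVert_{L^2(\mu)}, \]
so approximating $f$ by a continuous $f'$ reduces the general case to the previous step. (Symmetry in $f,g$ would also allow approximating $g$, but since only $L^1$ regularity is required of $g$ in Lemma \ref{lemm:CorrBound}, this is not necessary.)

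The only subtle point is the order of quantifiers in the key estimate: since $\xi(f\circ T^k)$ depends on $k$, one must first fix $k$ so as to absorb the ``vertical'' error $\omega(a_k)\lVert g\rVert_{L^1(\mu)}$ coming from the shrinking of fibers, and only afterwards let $m\to\infty$ to exploit the strong mixing of $\check\mu$ horizontally. This is exactly the decoupling provided by Lemma \ref{lemm:CorrBound}, so once that lemma is in hand the proof is essentially mechanical.
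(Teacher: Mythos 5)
Your proposal is correct and takes essentially the same approach as the paper: both directions are handled the same way, with the easy direction via the change of variable $\corr{n}{\mu}(u\circ\pi,v\circ\pi)=\corr{n}{\check\mu}(u,v)$, and the hard direction via Lemma \ref{lemm:CorrBound} with the same order of quantifiers (fix $k$ to kill the vertical error, then let $m\to\infty$ using strong mixing of $\check\mu$), combined with density of continuous functions in $L^2(\mu)$. The only cosmetic difference is that you prove the result first for continuous $f$ and then approximate, whereas the paper approximates first and then runs the estimate for the continuous approximant; this is an inconsequential reordering.
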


\begin{proof}
Assume first that $\mu$ is strongly mixing, and let $u,v:Y\to \mathbb{R}$ be observables in $L^2(\check\mu)$. Since $\corr{n}{\mu}(u\circ \pi,v\circ \pi) = \corr{n}{\check\mu}(u,v)$ and $\mu$ is strongly mixing, this goes to $0$ as $n$ goes to $\infty$. (This is classical and does not use the shrinking property).

Assume now that $\check\mu$ is strongly mixing. Given $f,g\in L^2(\mu)$, define $\xi(f), \tilde f, \xi(g),\tilde g$ as above and recall that $\xi(f),\xi(g)\in L^2(\check\mu)$. Fix $\varepsilon>0$ and let $h$ be a continuous approximation of $f$, with $\lVert f-h\rVert_{L^2(\mu)}<\varepsilon$. We have
\begin{align*}
\Big\lvert \int (f-h)\circ T^n \cdot g \dd\mu \Big\rvert 
  &\le \Big(\int (f-h)^2\circ T^n \dd\mu \Big)^{\frac12} \Big(\int g^2 \dd\mu\Big)^{\frac12} \\
  &\le \Big(\int (f-h)^2 \dd\mu \Big)^{\frac12}\Big(\int g^2 \dd\mu\Big)^{\frac12} 
  &\le   \varepsilon \lVert g\rVert_{L^2(\mu)}
\end{align*} 
so that
\begin{equation}\corr{n}{\mu}(f,g) \le \corr{n}{\mu}(h,g) + \varepsilon \lVert g\rVert_{L^2(\mu)}.
\label{eq:mix1}
\end{equation}

Let $\omega$ be a modulus of continuity of $h$, and let $(a_n)_n$ be a shrinking sequence. There is a $k$ such that $\omega(a_{k})<\varepsilon$. 
By Lemma \ref{lemm:CorrBound} and using $\lVert\cdot\rVert_{L^1(\mu)} \le \lVert\cdot\rVert_{L^2(\mu)}$,
\begin{equation}
\corr{k+m}{\mu}(h,g) \le \corr{m}{\check\mu}\big( \xi(h\circ T^k), \xi(g) \big) + \varepsilon \lVert g\rVert_{L^2(\mu)}
\label{eq:mix2}
\end{equation}

Combining \eqref{eq:mix1} and \eqref{eq:mix2} we get $\corr{k+m}{\mu}(f,g) \le \corr{m}{\check\mu}\big( \xi(h\circ T^k),\xi(g) \big) + 2\varepsilon\lVert g\rVert_{L^2(\mu)}$. Since $\check\mu$ is strongly mixing, there is an $m_0$ such that for all $n>m_0+k$, $\corr{n}{\mu}(f,g) \le (1+2\lVert g\rVert_{L^2(\mu)})\varepsilon$, and $\mu$ is strongly mixing.
\end{proof}

%%%%%%%%%%%%%%%%%%%%%%%%%%%%%%%%%%%%%%%%%%%%%%%%%%%%%%%%%%%%%%%
\subsection{Entropy}

Entropy preservation in Theorem \ref{theo:mainLift} is unsurprising and, thanks to the uniqueness in Theorem \ref{theo:lift}, follows easily from the relative variational principle established by Ledrappier and Walters \cite{ledrappier1977relativised}: for all $\check\mu\in\proba_S(Y)$,
\[\sup_{\mu\in\pi_*^{-1}(\check\mu)} \entKS(T,\mu) = \entKS(S,\check\mu) + \int h(T,\pi^{-1}(y)) \dd\check\mu(y)\]
where $\entKS(T,\mu)$ is the Kolmogorov-Sinai entropy and $h(T,K)$ is the topological entropy of $T$ on the (non-necessarily invariant) compact set $K\subset X$.

\begin{proposition}\label{prop:entropy}
If $S,T$ are continuous, then $h(T,\mu) = h(S,\pi_*\mu)$ for all $\mu\in\proba_T(X)$.
\end{proposition}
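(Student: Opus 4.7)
The plan is to reduce the statement to the \emph{relative variational principle} of Ledrappier--Walters, which in our continuous setting reads
\[
\sup_{\mu\in\pi_*^{-1}(\check\mu)} \entKS(T,\mu) = \entKS(S,\check\mu) + \int h(T,\pi^{-1}(y))\dd\check\mu(y),
\]
the supremum ranging over $T$-invariant measures projecting to $\check\mu$. By Theorem \ref{theo:lift}, the set $\pi_*^{-1}(\check\mu)\cap\proba_T(X)$ is a singleton, so the supremum is a maximum attained at the unique lift $\mu$. It therefore suffices to show that the fiber-entropy term vanishes, i.e.\ that $h(T,\pi^{-1}(y))=0$ for every (or $\check\mu$-almost every) $y\in Y$, where $h(T,K)$ denotes Bowen's topological entropy of $T$ on the non-invariant compact set $K\subset X$.

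To prove this zero-entropy statement, I would work with Bowen's definition through $(n,\varepsilon)$-separated sets: $h(T,K)=\lim_{\varepsilon\to 0}\limsup_n \frac{1}{n}\log s_n(\varepsilon,K)$ where $s_n(\varepsilon,K)$ is the maximum cardinality of an $(n,\varepsilon)$-separated subset of $K$. Fix $\varepsilon>0$ and $y\in Y$. By the shrinking fiber assumption, there is $N=N(\varepsilon)$ such that $a_n\le\varepsilon$ for all $n\ge N$, hence $\diam(T^n(\pi^{-1}(y)))\le\varepsilon$ for $n\ge N$; any two points of $\pi^{-1}(y)$ are thus $\varepsilon$-close under $T^n$ as soon as $n\ge N$. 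Consequently, the $(n,\varepsilon)$-separation of a subset of $\pi^{-1}(y)$ can only come from the first $N$ iterates. By continuity of $T$ on the compact space $X$, the maps $T,T^2,\ldots,T^{N-1}$ are uniformly continuous, so there exists $\delta>0$ such that $d(x,x')<\delta$ implies $d(T^k x,T^k x')<\varepsilon$ for $k=0,\ldots,N-1$. An $(n,\varepsilon)$-separated subset of $\pi^{-1}(y)$ must therefore be $\delta$-separated in the ambient metric of $X$, and the compactness of $\pi^{-1}(y)$ bounds its cardinality by a constant $M(\delta)$ independent of $n$. Hence $\limsup_n \frac{1}{n}\log s_n(\varepsilon,\pi^{-1}(y))=0$ for every $\varepsilon>0$, so $h(T,\pi^{-1}(y))=0$.

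Plugging this into the relative variational principle gives $\entKS(T,\mu)=\entKS(S,\check\mu)$, which is the desired conclusion. The only real step is the zero-entropy claim on fibers, which I expect to be painless given how strong the shrinking assumption is: it essentially forces the dynamics restricted to any fiber to be equicontinuous, hence of vanishing topological entropy. The rest is bookkeeping around Ledrappier--Walters and the uniqueness part of Theorem \ref{theo:lift}.
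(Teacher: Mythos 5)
Your proof is correct and follows essentially the same route as the paper: reduce via the Ledrappier--Walters relative variational principle, invoke the uniqueness of the lift from Theorem \ref{theo:lift} so that the supremum collapses, and then show $h(T,\pi^{-1}(y))=0$ by observing that the shrinking hypothesis kills $(n,\varepsilon)$-separation at indices beyond a fixed $N(\varepsilon)$. The only (harmless) difference is that you pass through uniform continuity of the finitely many iterates $T,\dots,T^{N-1}$ to turn $(N,\varepsilon)$-separation into plain $\delta$-separation, whereas the paper directly bounds $(n,\delta)$-separated sets by the finite maximal cardinality of an $(n_0,\delta)$-separated subset of the compact fiber; both yield the same $n$-independent bound.
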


\begin{proof}
Let $\mu\in\proba_T(X)$ and $\check\mu=\pi_*\mu$. By Theorem \ref{theo:lift}, $\pi_*^{-1}(\check\mu)=\{\mu\}$, so that the Ledrappier-Walters relative variational principle reads $\entKS(T,\mu) = \entKS(S,\check\mu) + \int \ent(T,\pi^{-1}(y)) \dd\check\mu(y)$, and we are left with proving $\ent(T,\pi^{-1}(y)) \equiv 0$.

Let $y\in Y$ and $\delta>0$. There is an $n_0\in\mathbb{N}$ such that for all $n>n_0$, $a_n < \delta$. Let $E_0$ be a maximal $(n_0,\delta)$-separated set of $\pi^{-1}(y)$. For all $x,x'\in E_0$ and all $m$ such that $n_0<m\le n$, $d(T^m x,T^m x')\le a_m < \delta$ so that $E_0$ must be $(n,\delta)$-separated as well. It follows that the cardinal of an $(n,\delta)$-separated set is bounded independently of $n$, and therefore $\ent(T,\pi^{-1}(y))=0$.
\end{proof}

%%%%%%%%%%%%%%%%%%%%%%%%%%%%%%%%%%%%%%%%%%%%%%%%%%%%%%%%%%%%%%%
\subsection{Absolute continuity, physicality, observability}

Assume here that $X$ and $Y$ are equipped with reference measures $\lambda_X$ and $\lambda_Y$. In general, the lift to an extension with shrinking fibers of an absolutely continuous invariant probability (Acip) is not itself an Acip; the map $S$ could have an Acip while $T$ does not (e.g. take $Y$ to be a point, $T$ contracting). However the weaker property of physicality is preserved under a mild regularity assumption on $\pi$.

\begin{proposition}\label{prop:physical}
Assume that $T$ is continuous and $X,Y$ are equipped with reference measures with respect to which $\pi$ is non-singular. A measure $\mu\in\proba_T(X)$ is physical if and only if $\pi_*\mu$ is.
\end{proposition}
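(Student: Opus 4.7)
The plan is to show the stronger statement that $\basin(\mu) = \pi^{-1}(\basin(\check\mu))$, where $\check\mu = \pi_*\mu$, and then conclude using non-singularity of $\pi$. Denote the empirical measure of $x$ at time $n$ by $\nu_n(x) = \frac{1}{n}\sum_{k=0}^{n-1}\delta_{T^k x}$, so that $\pi_*\nu_n(x) = \frac{1}{n}\sum_{k=0}^{n-1}\delta_{S^k \pi(x)}$ is the empirical measure of $\pi(x)$ under $S$.

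First I would prove the easy inclusion $\basin(\mu) \subset \pi^{-1}(\basin(\check\mu))$: if $x \in \basin(\mu)$, then $\nu_n(x) \to \mu$ in the weak-$*$ topology, and since $\pi$ is continuous, $\pi_*$ is weak-$*$ continuous, so $\pi_*\nu_n(x) \to \pi_*\mu = \check\mu$, which means $\pi(x)\in\basin(\check\mu)$.

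Next I would prove the reverse inclusion $\pi^{-1}(\basin(\check\mu)) \subset \basin(\mu)$, which is where uniqueness of the lift (Theorem \ref{theo:lift}) enters. Take $x$ with $y := \pi(x) \in \basin(\check\mu)$; then $\pi_*\nu_n(x) \to \check\mu$. By compactness of $\proba(X)$ in the weak-$*$ topology, any subsequence of $\big(\nu_n(x)\big)_n$ admits a further convergent subsequence, whose limit $\nu$ satisfies $\pi_*\nu = \check\mu$ (by continuity of $\pi_*$) and is $T$-invariant (here we use continuity of $T$, as flagged in Remark \ref{rema:APPV}: the standard Krylov--Bogoliubov argument gives invariance of cluster points of empirical measures). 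By Theorem \ref{theo:lift}, the only $T$-invariant measure projecting to $\check\mu$ is $\mu$, so $\nu = \mu$. Since every subsequence has a subsequence converging to the same limit $\mu$, the full sequence $\nu_n(x)$ converges to $\mu$, i.e.\ $x\in\basin(\mu)$.

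To conclude, I would combine these two inclusions with non-singularity. Note that $\basin(\mu)$ is a Borel set (it can be written $\bigcap_{k\ge 1}\bigcup_{N\ge 1}\bigcap_{n\ge N}\{x \,:\, \wass(\nu_n(x),\mu) < 1/k\}$, using that $x\mapsto \nu_n(x)$ is Borel and $\wass(\cdot,\mu)$ is continuous), and similarly for $\basin(\check\mu)$. The identity $\basin(\mu) = \pi^{-1}(\basin(\check\mu))$ combined with the assumption that $\lambda_Y(B) > 0 \iff \lambda_X(\pi^{-1}(B)) > 0$ applied to $B = \basin(\check\mu)$ immediately yields the equivalence of physicality for $\mu$ and for $\check\mu$. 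The only non-routine step is the use of uniqueness of the lift to pin down all cluster points of the empirical measures; this is where continuity of $T$ is essential, and discards the potential obstruction discussed in Remark \ref{rema:APPV}.
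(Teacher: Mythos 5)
Your proposal is correct and follows essentially the same argument as the paper: the easy inclusion $\basin(\mu)\subset\pi^{-1}(\basin(\check\mu))$ by continuity of $\pi_*$, the reverse inclusion by compactness of $\proba(X)$ plus continuity of $T$ (so cluster points of empirical measures are $T$-invariant) plus uniqueness of the lift from Theorem \ref{theo:lift}, and then non-singularity of $\pi$ to transfer positivity of volume. The small addition that $\basin(\mu)$ is Borel is a correct observation that the paper leaves implicit.
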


\begin{proof}
As usual we set $\check\mu=\pi_*\mu$.
For all $x\in X$ we have $\pi_*\big(\frac1n \sum_{k=0}^{n-1} \delta_{T^kx} \big) = \frac1n \sum_{k=0}^{n-1} \delta_{S^k\pi(x)}$.
If $\frac1n \sum_{k=0}^{n-1} \delta_{T^kx}$ converges to some $\nu\in\proba_T(X)$, then $\frac1n \sum_{k=0}^{n-1} \delta_{S^k\pi(x)}$ converges to $\pi_*\nu$. This proves that $\basin(\mu)\subset \pi^{-1}(\basin(\check\mu))$; we get equality by compactness: if $\pi(x)\in \basin(\check\mu)$, then any cluster point of the sequence $\big(\frac1n \sum_{k=0}^{n-1} \delta_{T^kx}\big)_n$ is mapped by $\pi_*$ to $\check\mu$. Since $T$ is continuous, such cluster points are $T$-invariant, so that Theorem \ref{theo:lift} implies that $\mu$ is the unique cluster point of the sequence, hence its limit.

Since $\pi$ is non-singular, $\lambda_Y(\basin(\check\mu))>0$ if and only if $\lambda_X(\pi^{-1}(\basin(\check\mu)))>0$, i.e.
$\check\mu$ is physical if and only if $\mu$ is physical.
\end{proof}

Since ergodic Acip are particular cases of physical measures, while they do not necessarily lift to Acips, they do lift to physical measures. This implies that many weakly hyperbolic systems have physical measures (see e.g. Corollaries \ref{coro:mainPM}, \ref{coro:mainlog}).

\begin{remark}\label{r:counter-ex}
The assumption that $T$ is continuous cannot be lifted, as the following example shows. Let $X=[0,1]$ and consider a map $T$, continuous on $(0,1]$, such that $T(x)<x$ for all $x\in(0,1)$ and $T(1)=T(0)=1$, as pictured in Figure \ref{fig:counter-ex}. Let $Y=[0,1]/(0\sim 1)$ be the circle and $\pi:X\to Y$ be the usual projection; set $\bar 0=\pi(0)=\pi(1)$. Let $\lambda_X$ and $\lambda_Y=\pi_*\lambda_X$ be the Lebesgue measures. Most fibers of $\pi$ are reduced to a single point, the only exception being $\pi^{-1}(\bar 0) = \{0,1\}$. Since $T(\{0,1\})=\{1\}$, $T$ has shrinking fibers in a very strong sense, with $a_n=0$ for all $n>0$. 

On the one hand $S$ is a ``parabolic'' circle map: it has a unique fixed point $\bar0$, which is attractive on one side and repulsive on the other, and every orbit converges to $\bar 0$. In particular, $\delta_{\bar 0}$ is the unique invariant measure of $S$ and is physical: its basin is the whole of $Y$. On the other hand, $\delta_1$ is the unique invariant measure of $T$ but it is not physical, since for all $x\in(0,1)$ we have $T^k(x)\to 0$ and $\frac1n\sum_{k=0}^{n-1}\delta_{T^k x}\to \delta_0$.
\end{remark}

\begin{figure}
\begin{center}
\includegraphics[scale=1]{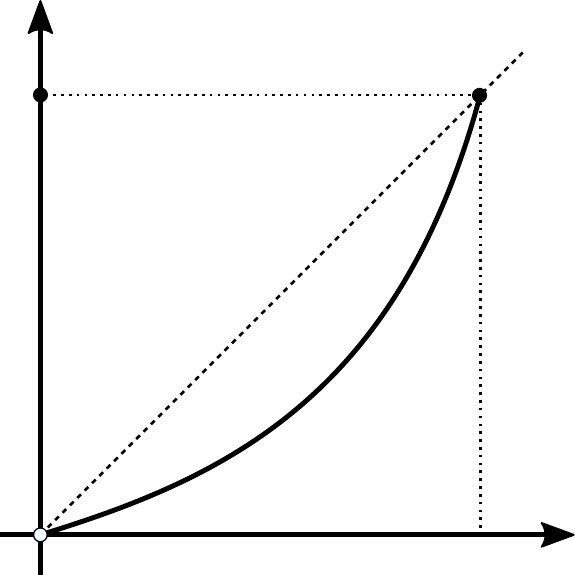}
\caption{A discontinous extension $T$ of a circle map $S$, where $S$ has a physical measure but $T$ has none.}\label{fig:counter-ex}
\end{center}
\end{figure}

Finally, we show that it is not much more difficult to lift observability.
\begin{proposition}\label{prop:observable}
Assume that $T$ is continuous and $X,Y$ are equipped with reference measures with respect to which $\pi$ is non-singular. A measure $\mu\in\proba_T(X)$ is observable if and only if $\pi_*\mu$ is.
\end{proposition}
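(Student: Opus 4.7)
The plan is to mirror the strategy of Proposition \ref{prop:physical}, but with the additional bookkeeping demanded by $\varepsilon$-basins rather than honest basins. The central lemma I would prove first is that the set of cluster points intertwines with $\pi$: for every $x\in X$,
\[ \pi_*\big(\po(x)\big) = \po(\pi(x)) \qquad\text{and equivalently}\qquad \po(x) = \pi^*\big(\po(\pi(x))\big). \]
The first inclusion $\pi_*\po(x) \subset \po(\pi(x))$ is immediate from continuity of $\pi$ (weak-$*$ convergence is preserved by push-forward). The reverse inclusion uses compactness of $\proba(X)$ and Theorem \ref{theo:lift}: given $\check\nu\in\po(\pi(x))$ obtained along a subsequence $(n_j)$, extract a subsubsequence along which $\tfrac1{n_j}\sum_{k<n_j}\delta_{T^k x}$ converges to some $\nu$; continuity of $T$ forces $\nu\in\proba_T(X)$ with $\pi_*\nu=\check\nu$, so $\nu=\pi^*\check\nu$ by uniqueness.

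Next I would observe that since $X,Y$ are compact, both $\pi_*:\proba(X)\to\proba(Y)$ and its inverse $\pi^*:\proba_S(Y)\to\proba_T(X)$ (which is well-defined and continuous by Corollary \ref{coro:homeo}) are uniformly continuous in the Wasserstein metric, with some moduli $\omega_{\pi_*}$ and $\omega_{\pi^*}$. Using the intertwining displayed above, this gives the two key comparisons:
\[ \pi(x)\in\basin_\varepsilon(\check\mu) \implies x\in\basin_{\omega_{\pi^*}(\varepsilon)}(\mu), \qquad x\in\basin_\varepsilon(\mu) \implies \pi(x)\in\basin_{\omega_{\pi_*}(\varepsilon)}(\check\mu), \]
whichever convention ($\sup$ or $\inf$ over $\po(x)$) is used to define $\wass(\po(x),\cdot)$, since both $\pi_*$ and $\pi^*$ are Lipschitz-continuous moduli-wise and thus preserve Hausdorff-type distances between a set and a point.

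To conclude, I would invoke non-singularity of $\pi$ exactly as in Proposition \ref{prop:physical}. Assuming $\check\mu$ is observable, given $\varepsilon'>0$ I pick $\varepsilon>0$ with $\omega_{\pi^*}(\varepsilon)<\varepsilon'$; then $\basin_\varepsilon(\check\mu)$ has positive $\lambda_Y$-volume, so $\pi^{-1}(\basin_\varepsilon(\check\mu))$ has positive $\lambda_X$-volume by non-singularity, and the inclusion $\pi^{-1}(\basin_\varepsilon(\check\mu))\subset\basin_{\varepsilon'}(\mu)$ from the first comparison gives observability of $\mu$. Conversely, if $\mu$ is observable, the inclusion $\basin_\varepsilon(\mu)\subset\pi^{-1}(\basin_{\omega_{\pi_*}(\varepsilon)}(\check\mu))$ from the second comparison, combined with non-singularity again, transfers positive volume back to $Y$.

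I expect no real obstacle: the only subtle point is the first inclusion $\po(\pi(x))\subset\pi_*\po(x)$, which is where the shrinking-fibers hypothesis enters (through Theorem \ref{theo:lift} forcing any $T$-invariant lift of $\check\nu$ to be the unique $\pi^*\check\nu$). Everything else is a transcription of the argument for physicality, with uniform continuity of the homeomorphism $\pi_*:\proba_T(X)\to\proba_S(Y)$ absorbing the $\varepsilon$-dependence.
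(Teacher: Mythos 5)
Your argument is correct and follows essentially the same route as the paper: the paper likewise uses (uniform) continuity of $\pi_*$ and of $\pi^*$ in the Wasserstein metric, a compactness extraction combined with continuity of $T$ and the uniqueness from Theorem~\ref{theo:lift} to identify cluster points of the lifted empirical measures, and non-singularity of $\pi$ to transfer positive volume. The only difference is organizational — you package the intertwining relation $\pi_*\po(x)=\po(\pi(x))$ as a standalone lemma, whereas the paper carries out the same reasoning inline within the two implications.
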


\begin{proof}
Given any $\varepsilon>0$, since $\pi_*$ is continuous there exist some $\eta>0$ such that for all $\mu_0,\mu_1\in\proba(X)$, $\wass(\mu_0,\mu_1)<\eta \implies \wass(\pi_*\mu_0,\pi_*\mu_1)<\epsilon$. Let $x\in\basin_\eta(\mu)$: there exist $\mu_0\in\proba_T(X)$ such that $\wass(\mu_0,\mu)<\eta$ and an increasing sequence of positive integers $(n_k)_{k\in\mathbb{N}}$ such that
$\mu_0 = \lim_k \frac{1}{n_k} \sum_{j=0}^{n_k-1} \delta_{T^j(x)}$. Then $\frac{1}{n_k} \sum_{j=0}^{n_k-1} \delta_{S^j\pi(x)}\to \pi_*\mu_0\in\proba_S(Y)$ and $\wass(\pi_*\mu_0,\check\mu) \le \varepsilon$, so that $\pi(x) \in \basin_\varepsilon(\check\mu)$. We have proved $\basin_\eta(\mu)\subset \pi^{-1}(\basin_\varepsilon(\check\mu))$; if $\mu$ is observable, then $\lambda_X(\basin_\eta(\mu))>0$ and by non-singularity of $\pi$, we deduce that $\check\mu$ is observable.

Since $\pi^*$ is continuous, for all $\varepsilon>0$ there exist an $\eta>0$ such that for all $\nu_0,\nu_1\in\proba_S(Y)$, $\wass(\nu_0,\nu_1)<\eta \implies \wass(\pi^*\nu_0,\pi^*\nu_1)<\epsilon$.
Let $x\in X$ such that $\pi(x)\in \basin_\eta(\check\mu)$. There exist $\nu_0\in\proba_S(Y)$ such that $\wass(\check\mu,\nu_0)<\eta$ and an increasing sequence of positive integers $(n_k)_{k\in\mathbb{N}}$ such that
$\nu_0 = \lim_k \frac{1}{n_k} \sum_{j=0}^{n_k-1} \delta_{S^j\pi(x)} =\lim_k \pi_*\big( \frac{1}{n_k} \sum_{j=0}^{n_k-1} \delta_{T^j x} \big)$.
It follows that any cluster point of $\big(\frac{1}{n_k} \sum_{j=0}^{n_k-1} \delta_{T^jx}\big)_k$ is mapped by $\pi_*$ to $\nu_0$. Since $T$ is continuous, these cluster points are $T$-invariant and there is only one of them, $\pi^*\nu_0$. Since $\wass(\nu_0,\check\mu)<\eta$, we moreover have $\wass(\pi^*\nu_0,\pi^*\check\mu)<\varepsilon$, so that $x\in \basin_\varepsilon(\mu)$.
We thus proved that $\pi^{-1}(\basin_\eta(\check\mu))\subset \basin_\varepsilon(\mu)$, from which we deduce that if $\check\mu$ is observable, then so is $\mu$.
\end{proof}

%%%%%%%%%%%%%%%%%%%%%%%%%%%%%%%%%%%%%%%%%%%%%%%%%%%%%%%%%%%%%%%
%%%%%%%%%%%%%%%%%%%%%%%%%%%%%%%%%%%%%%%%%%%%%%%%%%%%%%%%%%%%%%%
\section{Equilibrium states and statistical properties}\label{sec:equilibrium}

In this Section we consider some classical objects and properties that form the core of the Thermodynamical Formalism, and lift them from $S$ to $T$. This will for example be used to recover information about certain hyperbolic maps from information about expanding maps. This is an old strategy, notably well developed in symbolic dynamics, that have been extended more generally through Markov partition and coding. More recently, a ``direct lifting'' approach has been used frequently, often in quite specific cases. Our goal is to use this approach in the most general way while keeping all proofs simple. We shall use preliminary subsections \ref{sec:def-moduli}, \ref{sec:def-disintegration} (definition of a section), \ref{sec:def-stat}.

%%%%%%%%%%%%%%%%%%%%%%%%%%%%%%%%%%%%%%%%%%%%%%%%%%%%%%%%%%%%%%%
\subsection{Equilibrium states}

Let $\varphi:X\to\mathbb{R}$ be a function, here called an \emph{potential}, to be interpreted physically (up to the sign) as a density of energy: a $T$-invariant measure $\mu$ is called a ``state'', the total energy of the system in state $\mu$ being $-\mu(\varphi)$. The ``free energy'' is then $\mathscr{F}(\mu) := \entKS(T,\mu)+\mu(\varphi)$, and we seek \emph{equilibrium states}, i.e. invariant measures maximizing free energy. The main questions underlying the ``thermodynamical formalism'' are existence, uniqueness, and statistical properties of equilibrium states. 

Here of course we want to relate this to the corresponding situation for $S$; since $\check\mu = \pi_*\mu$ is the state on $Y$ corresponding to $\mu$ and $\entKS(S,\check\mu) = \entKS(T,\mu)$ (Proposition \ref{prop:entropy}), one only needs to consider the energy term. We would thus like to construct a potential $\check\varphi : Y\to\mathbb{R}$ related to $\varphi$; using the disintegration of $\mu$ to construct $\xi(\varphi)$ as in Section \ref{sec:mixing} is not suitable here since invariant measures are to be considered all at once and compared. We will rather add a suitable \emph{coboundary} to $\varphi$, as is classically done in the case of shifts, see \cite{bowen1975equilibrium}.

Coboundaries are defined as the functions of the form $h-h\circ T : X\to\mathbb{R}$. They are important because for \emph{all} $T$-invariant measure $\mu$, we have $\mu(h-h\circ T) = \mu(h) - \mu(h\circ T) = 0$: adding a coboundary to a potential does not change its energy with respect to any state. We will construct a potential $\hat\varphi = \varphi + h-h\circ T$ that is constant on fibers (then $\hat\varphi = \check\varphi \circ \pi$ will define $\check\varphi:Y\to\mathbb{R}$).

\begin{lemma}\label{lemm:coboundary}
Assume that fibers are $(a_n)_n$-shrinking, and that $\pi$ admits a continuous section $\sigma : Y\to X$.
Let $\varphi : X\to\mathbb{R}$ be a $\omega$-continuous potential where  $\sum_{n\ge 0} \omega(a_n) <\infty$, and set
$h = \sum_{n=0}^\infty \big( \varphi T^n\sigma \pi - \varphi T^n \big) $.
Then  $h:X\to\mathbb{R}$ is well-defined and $\hat\varphi := \varphi +h- h\circ T$ is constant on each fiber. 

If $T$ is continuous, so is $h$. If $T$ is $L$-Lipschitz, then for all $x,x'\in X$ and all $N\in\mathbb{N}$:
\[\lvert h(x)- h(x')\rvert \le 2\Hol_\omega(\varphi) \Big(\sum_{n=0}^N \omega\big(L^n \omega_{\sigma\pi}(d(x,x'))\big) + \sum_{n>N} \omega(a_n) \Big)\]
where $\omega_{\sigma\pi}$ is any modulus of continuity of $\sigma\pi$ such that $\omega_{\sigma\pi}(r)\ge r$ for all $r\in[0,\diam X]$.
\end{lemma}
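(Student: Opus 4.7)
My plan is to proceed in three steps mirroring the assertions: well-definedness (plus continuity), the fiber-constancy of $\hat\varphi$, and the quantitative Lipschitz-type estimate.

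First, I would verify that the series defining $h$ converges. For each $x\in X$, the points $x$ and $\sigma\pi(x)$ lie in the common fiber $\pi^{-1}(\pi(x))$, so the shrinking hypothesis gives $d(T^n x, T^n\sigma\pi(x))\le a_n$, whence
\[
\bigl|\varphi T^n\sigma\pi(x) - \varphi T^n(x)\bigr| \le \Hol_\omega(\varphi)\,\omega(a_n).
\]
Since $\sum_n \omega(a_n)<\infty$, the series converges absolutely and uniformly in $x$, giving $\|h\|_\infty\le \Hol_\omega(\varphi)\sum_n\omega(a_n)$. When $T$ is continuous each partial sum is continuous, and uniform convergence then gives continuity of $h$.

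Second, I would prove that $\hat\varphi$ is constant on fibers by manipulating partial sums, using $\pi T=S\pi$ (so that $\sigma\pi T=\sigma S\pi$). A direct regrouping of $h_N(x) := \sum_{n=0}^N(\varphi T^n\sigma\pi(x)-\varphi T^n(x))$ yields
\[
h_N(x) - h_N(Tx) = \sum_{n=0}^N \bigl(\varphi T^n\sigma\pi(x) - \varphi T^n\sigma S\pi(x)\bigr) + \varphi T^{N+1}(x) - \varphi(x),
\]
so that
\[
\hat\varphi_N(x) := \varphi(x) + h_N(x) - h_N(Tx) = \sum_{n=0}^N \bigl(\varphi T^n\sigma\pi(x) - \varphi T^n\sigma S\pi(x)\bigr) + \varphi T^{N+1}(x).
\]
Every individual term on the right factors through $\pi$: both $\sigma\pi(x)$ and $\sigma S\pi(x)=\sigma\pi(Tx)$ depend only on $\pi(x)$, and $\pi(T^{N+1}x)=S^{N+1}\pi(x)$. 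Hence $\hat\varphi_N$ agrees on fibers for every $N$, and since $\hat\varphi_N\to\hat\varphi$ pointwise (as $h_N\to h$ uniformly), the limit $\hat\varphi$ is likewise constant on fibers. The one subtlety I would flag is that $\varphi T^{N+1}(x)$ has no limit by itself, so the right-hand side must be kept as a whole rather than split; if one wants an explicit closed form, one pairs $\varphi T^n\sigma\pi(x)$ with $\varphi T^{n-1}\sigma S\pi(x)$ (both in the fiber over $S^n\pi(x)$) to rewrite it as an absolutely convergent series and $\varphi T^{N+1}(x)-\varphi T^N\sigma S\pi(x)$, which tends to $0$ by shrinking.

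Third, for the Lipschitz-type estimate, I would split the sum $|h(x)-h(x')|$ at an integer $N$. For $n\le N$, since $T^n$ is $L^n$-Lipschitz and $\sigma\pi$ has modulus $\omega_{\sigma\pi}$, both $\varphi T^n\sigma\pi$ and $\varphi T^n$ are $\Hol_\omega(\varphi)\,\omega(L^n\omega_{\sigma\pi}(\cdot))$-continuous (using $\omega_{\sigma\pi}(r)\ge r$ for the second), contributing at most $2\Hol_\omega(\varphi)\sum_{n=0}^N\omega(L^n\omega_{\sigma\pi}(d(x,x')))$. For $n>N$, I would use the fiberwise bound $|\varphi T^n\sigma\pi(z)-\varphi T^n(z)|\le \Hol_\omega(\varphi)\omega(a_n)$ applied separately at $z=x$ and $z=x'$, yielding the tail $2\Hol_\omega(\varphi)\sum_{n>N}\omega(a_n)$. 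Adding gives the announced inequality. I do not expect a genuine obstacle here; the only delicate point of the whole argument is the reorganisation in the second step, where one must avoid separating the series into individually divergent pieces.
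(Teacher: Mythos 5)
Your first and third steps match the paper's proof closely (uniform convergence of the series; splitting $|h(x)-h(x')|$ at $N$ and treating the two pieces via the Lipschitz constant of $T^n$ and via shrinking). The second step takes a partial-sum route, which is a reasonable alternative to the paper's formal term-by-term manipulation of infinite series, and it leads to the same closed form
\[
\hat\varphi = \varphi\sigma\pi + \sum_{n\ge 1}\big(\varphi T^n\sigma\pi - \varphi T^{n-1}\sigma S\pi\big)
\]
after your re-pairing.

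However, there is a genuine slip in the way you justify fiber-constancy. You assert that ``every individual term on the right factors through $\pi$'' and hence that $\hat\varphi_N$ is constant on fibers. This is false for the boundary term: $\pi T^{N+1}(x)=S^{N+1}\pi(x)$ does depend only on $\pi(x)$, but $\varphi T^{N+1}(x)$ does not, since $\varphi$ is not assumed constant on fibers. So $\hat\varphi_N$ is \emph{not} constant on fibers, and your ``hence\dots the limit is likewise constant'' does not follow as stated. The ``subtlety'' you then flag is actually the essential fix, not an optional way to get a closed form: the re-pairing expresses $\hat\varphi_N$ as a fiber-constant quantity plus the error $\varphi T^{N+1}(x)-\varphi T^N\sigma S\pi(x)$, and since $T^{N+1}(x)$ and $T^N\sigma S\pi(x)$ are both $T^N$-images of points in the fiber over $S\pi(x)$, this error is bounded by $\Hol_\omega(\varphi)\,\omega(a_N)\to 0$. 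Equivalently, and perhaps most directly, one can observe that for $x,x'$ on the same fiber, $\hat\varphi_N(x)-\hat\varphi_N(x')=\varphi T^{N+1}(x)-\varphi T^{N+1}(x')$, which is bounded by $\Hol_\omega(\varphi)\,\omega(a_{N+1})\to 0$, so $\hat\varphi(x)=\hat\varphi(x')$ in the limit. You should promote this observation from a side remark to the actual proof of the second step.
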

(The assumption on $\omega_{\sigma\pi}$ can always be obtained up to increase the modulus, and is only meant to simplify the conclusion.)

\begin{proof}
Let $H=\Hol_\omega(\varphi)$.
For all $x\in X$, $\sigma\pi(x)$ and $x$ lie on the same fiber, so that $d(T^n\sigma\pi(x),T^n(x)) \le a_n$ and $\lvert \varphi T^n\sigma \pi(x) - \varphi T^n(x) \rvert \le H\omega(a_n)$. The convergence of $\sum \omega(a_n)$ ensures the uniform convergence of the series defining $h$ which is therefore well-defined, and continuous whenever $T$ is.

Next, we have
\begin{align*}
\hat\varphi &= \varphi + \sum_{n=0}^\infty\big( \varphi T^n\sigma\pi -\varphi T^n \big) - \sum_{n=0}^\infty\big(\varphi T^n\sigma\pi T + \varphi T^{n+1} \big)\\
  &= \varphi + \varphi\sigma\pi -\varphi + \sum_{n=0}^\infty \big( \varphi T^{n+1}\sigma\pi -\varphi T^{n+1} -\varphi T^n\sigma S\pi + \varphi T^{n+1} \big) \\
  &= \varphi\sigma\pi + \sum_{n=0}^\infty \big( \varphi T^{n+1}\sigma\pi -\varphi T^n\sigma S\pi  \big)
\end{align*}
which is constant on fibers since $\pi$ factors on the right.
Assume now that $T$ is $L$-Lipschitz and $\sigma\pi$ has modulus of continuity $\omega_{\sigma\pi}$. Then
\begin{align*}
\lvert h(x)- h(x')\rvert 
  &\le \sum_{n=0}^\infty \big\lvert \varphi T^n\sigma \pi(x) - \varphi T^n(x) - \varphi T^n\sigma \pi(x') + \varphi T^n(x')\big\rvert \\
  &\le \sum_{n=0}^N \big\lvert  \varphi T^n\sigma \pi(x) - \varphi T^n\sigma \pi(x') \big\rvert + \sum_{n=0}^N \big\lvert \varphi T^n(x)- \varphi T^n(x') \big\rvert + \\
&\qquad  \sum_{n>N} \big\lvert \varphi T^n\sigma \pi(x) - \varphi T^n(x) \big\rvert + \sum_{n>N} \big\lvert \varphi T^n\sigma \pi(x') - \varphi T^n(x')\big\rvert \\
  &\le \sum_{n=0}^N H\omega\big(L^n \omega_{\sigma\pi}(d(x,x'))\big) + \sum_{n=0}^N H \omega\big(L^n d(x,x')\big) + 2\sum_{n>N} H \omega(a_n).
\end{align*}
We conclude by using $d(x,x')\le \omega_{\sigma\pi}(d(x,x'))$.
\end{proof}

\begin{theorem}\label{theo:coboundary}
Assume that $T$ is an extension of $S$ with $(a_n)_n$-shrinking fibers, that $\pi$ has modulus of continuity $\omega_\pi(r)\ge r$ and admits a Lipschitz section $\sigma : Y\to X$, and that $T$ is $L$-Lipschitz. Let $\omega$, $\check\omega$ be two moduli of continuity with $\check\omega\gtrsim \omega$.  

If for some constant $D>0$ and for all $r\in[0,\diam X]$ there exist some $N=N(r)\in\mathbb{N}$ such that
\begin{equation}  \sum_{n>N} \omega(a_n) \le D\check\omega(r) \quad\text{and}\quad \sum_{n=0}^N \omega\big(L^n \omega_\pi(r)\big) \le D \check\omega(r),
\label{eq:coboundary}
\end{equation}
then:
\begin{enumerate}
\item for all $\omega$-continuous potential $\varphi : X\to\mathbb{R}$ there is an $\check\omega$-continuous potential $\check\varphi: Y\to\mathbb{R}$ such that $\varphi$ differs from $\hat\varphi = \check\varphi\circ\pi$ by a coboundary,
\item for all $\mu\in \proba_T(X)$, writing $\check\mu = \pi_*\mu$ we have
\[\entKS(T,\mu) + \mu(\varphi) = \entKS(S,\check\mu)+ \check\mu(\check\varphi);\]
in particular $\pi^*$ realizes a bijection between equilibrium states of $\check\varphi$ and equilibrium states of $\varphi$,
\item we can realize $\varphi\mapsto \check\varphi$ as a continuous linear map from $\Hol_\omega(X)$ to $\Hol_{\check\omega}(Y)$.
\end{enumerate} 
\end{theorem}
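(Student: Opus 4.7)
My plan is to apply Lemma \ref{lemm:coboundary} and descend the resulting function through the section $\sigma$. First I would verify that \eqref{eq:coboundary} evaluated at $r = \diam X$ implies $\sum_n \omega(a_n) < \infty$: the tail past $N(\diam X)$ is bounded by $D\check\omega(\diam X)$, and the head is a finite sum bounded by $N \omega(\diam X)$. This lets Lemma \ref{lemm:coboundary} produce $h : X \to \mathbb{R}$ (continuous since $T$ is Lipschitz hence continuous) such that $\hat\varphi := \varphi + h - h\circ T$ is constant on each fiber. I then define $\check\varphi : Y \to \mathbb{R}$ by $\check\varphi(y) := \hat\varphi(\sigma y)$; the identity $\hat\varphi = \check\varphi \circ \pi$ follows from fiber-constancy, which also makes the definition independent of the particular section chosen.

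\textbf{Regularity of $\check\varphi$.} The core computation is the modulus estimate. Fix $y, y' \in Y$ and set $x := \sigma y$, $x' := \sigma y'$, so $d(x,x') \le L_\sigma d(y,y')$ with $L_\sigma := \mathrm{Lip}(\sigma)$. Decompose
\[
\check\varphi(y) - \check\varphi(y') = \bigl(\varphi(x) - \varphi(x')\bigr) + \bigl(h(x) - h(x')\bigr) - \bigl(h(Tx) - h(Tx')\bigr).
\]
The first term satisfies $|\varphi(x) - \varphi(x')| \le \Hol_\omega(\varphi) \omega(L_\sigma d(y,y')) \lesssim \Hol_\omega(\varphi) \check\omega(d(y,y'))$ by concavity of $\omega$ and $\check\omega \gtrsim \omega$. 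For the $h$-differences I take $\omega_{\sigma\pi} := \max(1,L_\sigma) \omega_\pi$, which is a modulus of $\sigma\pi$ satisfying $\omega_{\sigma\pi}(r) \ge r$, and pick $N := N(d(y,y'))$ from \eqref{eq:coboundary}. Applying the quantitative bound of Lemma \ref{lemm:coboundary} to both $(x,x')$ and $(Tx, Tx')$, and using concavity of $\omega$ to pull the Lipschitz factors $L_\sigma$ and $L L_\sigma$ out of $\omega_{\sigma\pi}(\cdot)$, each partial sum is bounded by a constant multiple of $D\check\omega(d(y,y'))$ thanks to \eqref{eq:coboundary}. This yields $\Hol_{\check\omega}(\check\varphi) \lesssim \Hol_\omega(\varphi)$. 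Combined with the uniform bound $\|\check\varphi\|_\infty \le \|\varphi\|_\infty + 2\|h\|_\infty \lesssim \|\varphi\|_\omega$ (using the finiteness of $\sum_n \omega(a_n)$), this gives (1) and the continuity in (3); linearity of $\varphi \mapsto \check\varphi$ is transparent since the series defining $h$ is linear in $\varphi$.

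\textbf{Free energies and bijection.} For (2), since $h - h\circ T$ is a coboundary, $\mu(h - h\circ T) = 0$ for every $T$-invariant $\mu$, so $\mu(\varphi) = \mu(\hat\varphi) = \mu(\check\varphi \circ \pi) = \check\mu(\check\varphi)$. Proposition \ref{prop:entropy} applies since $T$ is continuous and $S = \pi T \sigma$ is continuous as a composition of continuous maps, giving $\entKS(T,\mu) = \entKS(S,\check\mu)$. Summing yields the free-energy identity, and the bijection between equilibrium states of $\check\varphi$ on $Y$ and of $\varphi$ on $X$ follows from Corollary \ref{coro:homeo}.

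\textbf{Main obstacle.} The only genuinely technical step is the regularity estimate: reconciling Lemma \ref{lemm:coboundary}'s bound, whose summands involve $\omega(L^n \omega_{\sigma\pi}(d(x,x')))$, with \eqref{eq:coboundary}, which controls $\omega(L^n \omega_\pi(r))$ at $r = d(y,y')$. Absorbing the Lipschitz constants of $\sigma$ and $T$ inside $\omega_\pi$ by concavity, and exploiting the hypothesis $\omega_\pi(r) \ge r$ to ensure these bounds dominate the naive $\omega(L^n d(x,x'))$ contribution, together with the uniform selection of $N$ as a function of $d(y,y')$ through the quantifier in \eqref{eq:coboundary}, are the moves that make the proof go through.
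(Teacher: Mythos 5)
Your proposal is correct and follows essentially the same route as the paper's proof: apply Lemma \ref{lemm:coboundary} to build $h$, use the quantitative bound of that lemma together with the hypothesis \eqref{eq:coboundary} to control moduli, descend through $\sigma$, and conclude (2) from the coboundary vanishing and Proposition \ref{prop:entropy}. The only cosmetic difference is in bookkeeping: the paper first proves $\Hol_{\check\omega}(h)\le 4D\Hol_\omega(\varphi)$ on $X$ (choosing $N=N(d(x,x'))$) and then composes with $\sigma$, whereas you estimate $\check\varphi(y)-\check\varphi(y')$ directly on $Y$ (choosing $N=N(d(y,y'))$ and absorbing the Lipschitz constants of $\sigma$ and $T$ into $\omega_\pi$ by concavity). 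Both organizations are valid and give the same conclusion; the paper's has the slight advantage of staying within the range $r\in[0,\diam X]$ of the hypothesis at no extra cost, but your version handles this harmlessly via the concavity absorption you describe.
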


\begin{proof}
Given $\varphi\in\Hol_\omega(X)$, let $h$ be the function defined by Lemma \ref{lemm:coboundary}. The hypotheses are taylored to ensure that $h$ is $\check\omega$-continuous (more precisely $\Hol_{\check\omega}(h)\le 4D\Hol_\omega(\varphi)$).
It follows that $\Hol_{\check\omega}(h\circ T) \le 4LD \Hol_\omega(\varphi)$, and the potential $\hat\varphi = \varphi + h-h \circ T$ is $\check\omega$-continuous. Since $\sigma$ is Lipschitz, $\check\varphi :=\hat\varphi\circ \sigma$ is also $\check\omega$-continuous. Since $\hat\varphi$ is constant on fibers, $\check\varphi\circ\pi = \hat\varphi$.

The equality of free energies follows from the equality of entropies (Proposition \ref{prop:entropy}) and from $\mu(\varphi) = \mu(\hat\varphi) = \pi_*\mu(\check\varphi)$. 

The fact that $\varphi\mapsto \check\varphi$ is continuous linear follows from the construction.
\end{proof}

From here the game consists in finding the optimal choice of $N(r)$ depending on the available assumptions. We will restrict in the following to the case when $\pi$ is H\"older continuous, a common situation in hyperbolic dynamics.
\begin{corollary}\label{coro:coboundary}
Assume that $T$ is $L$-Lipschitz for some $L\ge 1$, that $\sigma$ is Lipschitz and that $\pi$ is $\beta$-H\"older.
\begin{enumerate}
\item\label{enumi:coboundary1} If the fibers are exponentially shrinking with ratio $\theta\in(0,1)$ and $\omega=\hol{\alpha}$ is a H\"older modulus of continuity, then the conclusions of Theorem \ref{theo:coboundary} hold with  $\check\omega = \hol{\gamma}$ where $\gamma = \frac{\alpha\beta}{1-\log L/\log\theta}$.
\item\label{enumi:coboundary2} If the fibers are polynomially shrinking with degree $d>0$, and $\omega = \hol{\alpha}$ where $\alpha>1/d$, then the conclusions of Theorem \ref{theo:coboundary} hold with $\check\omega = \holl{\alpha'}$ where $\alpha'=\alpha d-1$.
\item\label{enumi:coboundary3} If the fibers are exponentially shrinking and $\omega=\holl{\alpha}$ with $\alpha>1$, then the conclusions of Theorem \ref{theo:coboundary} hold with $\check\omega = \holl{\alpha'}$ where $\alpha'=(\alpha-1)/2$. \end{enumerate}
\end{corollary}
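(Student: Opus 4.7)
The plan is to apply Theorem \ref{theo:coboundary} in each case, the only nontrivial task being to exhibit, for every $r\in(0,\diam X]$, an integer $N=N(r)$ that makes both bounds in \eqref{eq:coboundary} hold with the advertised target modulus $\check\omega$. Since $\pi$ is $\beta$-H\"older we may take $\omega_\pi(r)=C r^\beta$, which absorbs into our constant $D$. The general strategy is the same throughout: the tail $\sum_{n>N}\omega(a_n)$ decreases in $N$ while the head $\sum_{n=0}^N \omega(L^n\omega_\pi(r))$ increases in $N$, and we must pick $N(r)$ so that both are controlled by $\check\omega(r)$. The three items differ only by the shape of these two quantities and the amount of cancellation available.

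\emph{Item \ref{enumi:coboundary1}.} With $a_n\lesssim\theta^n$ and $\omega=\hol{\alpha}$, we have $\sum_{n>N}\omega(a_n)\lesssim \theta^{\alpha N}$ and $\sum_{n=0}^N \omega(L^n\omega_\pi(r))\lesssim L^{\alpha N}r^{\alpha\beta}$. Balancing the two, i.e.\ choosing $N=N(r)$ so that $(L/\theta)^{\alpha N}\asymp r^{-\alpha\beta}$, yields $\theta^{\alpha N}\asymp r^{\gamma}$ for $\gamma=\alpha\beta/(1-\log L/\log\theta)$; this is precisely the claimed exponent.

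\emph{Item \ref{enumi:coboundary2}.} With $a_n\lesssim n^{-d}$ and $\omega=\hol{\alpha}$, and using $\alpha d>1$, we get $\sum_{n>N}\omega(a_n)\lesssim N^{-(\alpha d-1)}=N^{-\alpha'}$, while the head is still $\lesssim L^{\alpha N}r^{\alpha\beta}$. The head now has a polynomial term in $r$ to spare, so we may set $N(r)=\lceil c\log(1/r)\rceil$ with $c<\beta/\log L$ (taking $c=1$ when $L=1$): then $L^{\alpha N}r^{\alpha\beta}\lesssim r^{\alpha(\beta-c\log L)}$, which is dominated by any negative power of $\log(1/r)$ and in particular by $\holl{\alpha'}(r)$; and $N^{-\alpha'}\asymp(\log(1/r))^{-\alpha'}\asymp \holl{\alpha'}(r)$. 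This gives both bounds.

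\emph{Item \ref{enumi:coboundary3}.} With $a_n\lesssim\theta^n$ and $\omega=\holl{\alpha}$ for $\alpha>1$, a direct computation gives $\omega(a_n)\lesssim n^{-\alpha}$, whence $\sum_{n>N}\omega(a_n)\lesssim N^{1-\alpha}$. For the head, Proposition \ref{prop:alphalog} applied to $L$ and the observation $\holl{\alpha}(r^\beta)\lesssim\holl{\alpha}(r)$ (both sides are comparable to $(\log(1/r))^{-\alpha}$ for small $r$) yield $\omega(L^n r^\beta)\lesssim n^\alpha\holl{\alpha}(r)$, hence the head is $\lesssim N^{\alpha+1}\holl{\alpha}(r)$. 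To bound both by $\holl{\alpha'}(r)\asymp(\log(1/r))^{-\alpha'}$ we need $N^{\alpha-1}\gtrsim(\log(1/r))^{\alpha'}$ from the tail and $N^{\alpha+1}\lesssim(\log(1/r))^{\alpha-\alpha'}$ from the head. The two constraints are simultaneously satisfiable exactly when $\alpha'/(\alpha-1)\le(\alpha-\alpha')/(\alpha+1)$, which rearranges to $\alpha'\le(\alpha-1)/2$; the extremal value $\alpha'=(\alpha-1)/2$ is precisely the bound in the statement, and the choice $N(r)=\lceil(\log(1/r))^{1/2}\rceil$ then works.

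The main obstacle is item \ref{enumi:coboundary3}: unlike the other two, no polynomial factor in $r$ is present to absorb the growth $L^n$ along the head, so one must exploit the very strong concavity of $\holl{\alpha}$ via Proposition \ref{prop:alphalog}, and then carefully balance two powers of $\log(1/r)$ to identify the sharp value $\alpha'=(\alpha-1)/2$. Once this is done, feeding $(N(r),D)$ into Theorem \ref{theo:coboundary} immediately yields the conclusion in all three cases.
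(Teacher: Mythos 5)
Your proof is correct and follows essentially the same route as the paper's: in each case you pick $N(r)$ to balance the tail $\sum_{n>N}\omega(a_n)$ against the head $\sum_{n\le N}\omega(L^n\omega_\pi(r))$, and your choices of $N$ (geometric balancing in item~\emph{i}, $N\asymp\log(1/r)$ with a slack constant in item~\emph{ii}, $N\asymp(\log(1/r))^{1/2}$ in item~\emph{iii}) match the paper's up to the harmless $O(1)$ normalizations coming from $r_{\alpha'}$. The only genuine addition is your explicit computation in item~\emph{iii} that the two constraints $N^{\alpha-1}\gtrsim(\log(1/r))^{\alpha'}$ and $N^{\alpha+1}\lesssim(\log(1/r))^{\alpha-\alpha'}$ are simultaneously satisfiable exactly when $\alpha'\le(\alpha-1)/2$, identifying why this is the threshold for the method — a useful motivating remark the paper leaves implicit, but not a different argument.
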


Note that we can always replace $\check\omega$ with a larger modulus if needed. In particular, in the case of exponentially shrinking fibers, if each H\"older continuous potential on $Y$ has a unique equilibrium state for $S$, then each H\"older continuous potential on $X$ has a unique equilibrium state for $T$.

\begin{proof}
We apply Theorem \ref{theo:coboundary} three times.
For \ref{enumi:coboundary1}, take $N(r)=\frac{\gamma\log r}{\alpha\log\theta}+O(1)$: then 
\[\sum_{n>N}\omega(a_n) \lesssim \theta^{\alpha N} \lesssim r^\gamma \quad\text{and}\quad 
\sum_{n=0}^N \omega(L^n \omega_\pi(r)) \lesssim L^{\alpha N} r^{\alpha\beta} \lesssim r^{\gamma'}\]
with $\gamma'=\gamma\frac{\log L}{\log \theta}+\alpha\beta = \gamma$.

For \ref{enumi:coboundary2}, take $N(r)=\eta\log\frac{r_{\alpha'}}{r} / \log L + O(1)$ with any $\eta<\beta$: then
\[\sum_{n>N}\omega(a_n) \lesssim \frac{1}{N^{\alpha d-1}} \lesssim \frac{1}{ \big(\log \frac{r_{\alpha'}}{r}\big)^{\alpha'}}\]
and 
\[\sum_{n=0}^N \omega(L^n \omega_\pi(r)) \lesssim L^{\alpha N} r^{\alpha\beta} \lesssim r^{\alpha(\beta-\eta)}\ll \holl{\alpha'}(r).\]

For \ref{enumi:coboundary3}, take $N(r)=(\log\frac{r_{\alpha'}}{r})^{\frac12} + O(1)$: then, using Proposition \ref{prop:alphalog} for the second term,
\[\sum_{n>N}\omega(a_n) \lesssim \frac{1}{N^{\alpha-1}} \lesssim \frac{1}{ \big(\log \frac{r_{\alpha'}}{r}\big)^{\alpha'}} \quad\text{and}\quad 
\sum_{n=0}^N \omega(L^n\omega_\pi(r)) \lesssim \frac{N^{\alpha+1}}{\big(\log\frac{r_\alpha}{r^\beta}\big)^\alpha} \lesssim \frac{1}{\big(\log\frac{r_{\alpha'}}{r}\big)^{\alpha'}}.\]
\end{proof}

%%%%%%%%%%%%%%%%%%%%%%%%%%%%%%%%%%%%%%%%%%%%%%%%%%%%%%%%%%%%%%%
\subsection{Statistical properties}

We would now like to lift statistical properties, assuming them for $\check\mu\in\proba_S(Y)$ and deducing them for its lift $\mu\in\proba_T(X)$. One can in principle lift a decay of correlations (which we will consider next) and then use it to prove statistical properties, but it is in fact simpler to use Theorem \ref{theo:coboundary} on observables to lift statistical properties directly.

\begin{proposition}\label{prop:statistical}
Assume $\pi$ has a section $\sigma:Y\to X$.
Consider $\mu\in\proba_T(X)$, $\check\mu=\pi_*\mu$, $\LimTh{T}\in\{\mathrm{LIL},\mathrm{CLT},\mathrm{ASIP}\}$, and let $\omega,\check\omega$ be two moduli of continuity. If
\begin{enumerate}
\item for each $f\in\Hol_\omega(X)$ there is a continuous $h:X\to \mathbb{R}$ such that $\hat f = f +h -h\circ T$ is constant on fibers and $\check f=\hat f\circ \sigma$ belongs to $\Hol_{\check\omega}(Y)$,
\item $\check\mu$ satisfies $\LimTh{T}$ for all $\check\omega$-continuous observables,  
\end{enumerate}
then $\mu$ satisfies $\LimTh{T}$ for all $\omega$-continuous observables, with the same parameters than $\check\mu$ (see Definition \ref{defi:stat}).
\end{proposition}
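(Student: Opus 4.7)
The plan is to transfer the limit theorem from $\check\mu$ to $\mu$ via the cohomological trick, using that coboundaries contribute only a bounded error to Birkhoff sums. Given $f\in\Hol_\omega(X)$, apply the hypothesis to obtain a continuous $h:X\to\mathbb{R}$ and $\check f\in\Hol_{\check\omega}(Y)$ such that $\hat f:=f+h-h\circ T=\check f\circ\pi$. Since $X$ is compact and $h$ continuous, $h$ is bounded; telescoping gives, for every $x\in X$ and $n\in\mathbb{N}$,
\[\sum_{k=1}^n f\circ T^k(x) \;=\; \sum_{k=1}^n \hat f\circ T^k(x) + h\circ T(x) - h\circ T^{n+1}(x) \;=\; \sum_{k=1}^n \check f\circ S^k(\pi(x)) + R_n(x),\]
where $\lVert R_n\rVert_\infty \le 2\lVert h\rVert_\infty$ uniformly in $n$. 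Moreover $\mu(f)=\mu(\hat f)=(\pi_*\mu)(\check f)=\check\mu(\check f)$, since $\mu$ is $T$-invariant (so the coboundary has zero average) and $\hat f=\check f\circ\pi$.

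The second step is to observe that if $Z$ is a random variable of law $\mu$, then $\pi(Z)$ has law $\check\mu$. Consequently the random process $(\check f\circ S^k(\pi(Z)))_{k\in\mathbb{N}}$ has the same law as $(\check f\circ S^k(\check Z))_{k\in\mathbb{N}}$ where $\check Z\sim\check\mu$. By hypothesis, the latter satisfies $\LimTh{T}$, with some parameter $\sigma_{\check f}$ (and $\lambda$ for the ASIP). Define $\sigma_f:=\sigma_{\check f}$ (and keep the same $\lambda$).

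The final step is to show that the bounded remainder $R_n$ cannot affect the limit theorem: dividing the displayed identity respectively by $\sqrt{2n\log\log n}$, by $\sqrt n$, or comparing to a sum of Gaussians and an error $o(n^\lambda)$ with $\lambda>0$, the term $R_n/\text{norm}$ tends to $0$ uniformly. Concretely: for the LIL, the $\limsup$ is unchanged because $R_n/\sqrt{2n\log\log n}\to 0$ uniformly; for the CLT, the difference of the normalized sums at $x$ and at $\pi(x)$ vanishes uniformly, so the distributional convergence transfers; for the ASIP, one builds the coupling on the same probability space as for $\check\mu$ by setting $A_k:=f\circ T^k(Z)$ (via a measure-preserving transfer using any measurable lift of a $\check\mu$-distributed random variable to a $\mu$-distributed one, or directly taking $Z=\sigma(\check Z)$ after pushing to $\mu$ by invariance under $T^n$; in any case the $A_k$ and $\check f\circ S^k(\pi(Z))$ differ by $O(1)$), and the Gaussian sequence $B_k$ is the one provided for $\check f$, so that $\bigl|\sum A_k-\sum B_k\bigr|\le \bigl|\sum\check f\circ S^k(\pi(Z))-\sum B_k\bigr|+2\lVert h\rVert_\infty=o(n^\lambda)$.

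The main (minor) obstacle is the ASIP step, where one must be slightly careful about setting up the coupling between the process associated to $\mu$ and the Gaussian process on a common probability space; but once the cohomological identity above is available, this is essentially book-keeping, since adding a uniformly bounded term to the partial sums preserves an $o(n^\lambda)$ approximation with $\lambda>0$.
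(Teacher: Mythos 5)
Your overall strategy is exactly the paper's: establish the cohomological identity $\sum_{k=1}^n f\circ T^k = \big(\sum_{k=1}^n \check f\circ S^k\big)\circ\pi + O(1)$, use $\pi_*\mu=\check\mu$ to project Birkhoff sums, and observe that LIL and CLT are insensitive to a uniformly bounded perturbation of the partial sums. Your treatments of the LIL and CLT are sound.

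The ASIP step, however, contains a genuine error in the construction of $Z$. Taking $Z=\sigma(\check Z)$ gives a random variable with law $\sigma_*\check\mu$, which is not $\mu$ in general ($\mu$ is $T$-invariant, $\sigma_*\check\mu$ usually is not, and $\mu$ need not be supported on the graph of $\sigma$). ``Pushing to $\mu$ by invariance under $T^n$'' does not repair this: for each fixed $n$ the law of $T^n\sigma(\check Z)$ is $T^n_*\sigma_*\check\mu\neq\mu$, and moreover $\pi(T^n\sigma(\check Z))=S^n\check Z\neq\check Z$, which destroys precisely the coupling your chain of inequalities relies on (you need $\check f\circ S^k(\pi(Z))$ to coincide with the $\check A_k$ from the ASIP hypothesis, hence you need $\pi(Z)=\check Z$ exactly). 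The alternative phrasing ``any measurable lift of a $\check\mu$-distributed random variable to a $\mu$-distributed one'' is also insufficient as stated: a deterministic map $\ell:Y\to X$ with $\pi\circ\ell=\Id_Y$ and $\ell_*\check\mu=\mu$ generally does not exist. What is required is a \emph{randomized} lift compatible with $\pi$: the paper enriches $\Omega$ with a uniform variable $V$ independent of everything else, uses measurable selection to obtain a family of maps $\Xi_y:[0,1]\to X$ pushing Lebesgue measure to the disintegration $\xi_y$ of $\mu$ over $\pi$, and sets $Z=\Xi_{\check Z}(V)$. This yields $Z$ of law $\mu$ with $\pi(Z)=\check Z$ almost surely. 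With that construction in place, the remainder of your argument (subtract the telescoping coboundary, absorb the $O(1)$ into $o(n^\lambda)$ since $\lambda>0$) closes correctly.
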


\begin{proof}
This is classical and straightforward. For all $f\in\Hol_\omega(X)$ we have
\[\sum_{k=1}^n \hat f\circ T^k = \sum_{k=1}^n f\circ T^k + h\circ T-h\circ T^{n+1} = \sum_{k=1}^n f\circ T^k + O(1)\]
where the $O(1)$ is bounded in the uniform norm, and $\sum_{k=1}^n \hat f\circ T^k = \big(\sum_{k=1}^n \check f \circ S^k \big)\circ \pi$.

Then, when $(\check\mu,\check f)$ satisfy the LIL with some variance $\sigma_{\check f}>0$, we have:
\[\frac{\sum_{k=1}^n f\circ T^k(x) -n \mu(f)}{\sqrt{2 n\log \log n}} = \frac{\sum_{k=1}^n \check f\circ S^k(\pi(x)) -n \check\mu(\check f)}{\sqrt{2 n\log \log n}} + o(1),\]
and the superior limit is $\sigma_{\check f}$ for all $x\notin \pi^{-1}(E)$ for some $\check\mu$-negligible set $E$. Since $\check\mu=\pi_*\mu$, $\pi^{-1}(E)$ is $\mu$-negligible.

In the case of the CLT, for all $\varepsilon>0$, for all $n$ large enough
\[\Big\lVert\sum_{k=1}^n \hat f\circ T^k -  \sum_{k=1}^n f\circ T^k \Big\rVert_\infty \le \varepsilon \sqrt{n}\]
and therefore
\begin{align*}
\mu\Big\{x\in X : \frac1{\sqrt{n}} \sum_{k=1}^n f\circ T^k(x) \le r \Big\} 
  &\ge \mu\Big\{x\in X : \frac1{\sqrt{n}} \sum_{k=1}^n \hat f\circ T^k(x) \le r-\varepsilon \Big\} \\
  &= \check\mu\Big\{y\in Y : \frac1{\sqrt{n}} \sum_{k=1}^n \check f\circ S^k(y) \le r-\varepsilon \Big\} \\
  &\to G_{\check\mu(\check f),\sigma_{\check f}}(r-\varepsilon)
\end{align*}
so that, by continuity of $G_{\check\mu(\check f),\sigma_{\check f}}=G_{\mu(f),\sigma_{\check f}}$,
\[\liminf_{n\to\infty} \mu\Big\{x\in X : \frac1{\sqrt{n}} \sum_{k=1}^n f\circ T^k(x) \le r \Big\} \ge G_{\mu(f),\sigma_{\check f}}(r).\]
The superior limit is treated in the same way, and we get the CLT for $\mu$, with the same variance.

In the case of the ASIP, we have processes $(\check A_k)_{k\in\mathbb{N}}$, whose law is the same than that of $(\check f\circ S^k(\check Z))_{k\in\mathbb{N}}$ where $\check Z$ has law $\check\mu$, and $(B_k)_{k\in\mathbb{N}}$, independent Gaussian of mean $\check\mu(\check f)=\mu(f)$ and variance $\sigma_{\check f}$, such that $\lvert\sum_{k=1}^n \check A_k -\sum_{k=1}^n B_k\lvert = o(n^\lambda)$ almost surely. We first construct a random variable $Z$ with law $\mu$ such that $\check Z=\pi(Z)$: up to enrich $\Omega$, we can assume to have a uniform random variable $V$ on $[0,1]$ independent from all previous random variables, and by measurable selection we have a measurable family of measurable maps $\Xi_y :[0,1]\to X$ such that $\Xi_y(V)$ has law $\xi_y$, where $(\xi_y)_{y\in Y}$ is the disintegration of $\mu$ with respect to $\pi$. Then $Z=\Xi_{\check Z}(V)$ is the desired random variable; now $(\check A_k)_{k\in\mathbb{N}}$ has the same law as $(\hat f\circ T^k(Z))_{k\in\mathbb{N}}$. Since $f\circ T^k(Z) = \hat f\circ T^k(Z)-h(T^k Z)+h(T^{k+1} Z)$, there is a process $(H_k)_k$ with the same law as $(h( T^k Z)-h(T^{k+1} Z))_k$ such that $(A_k)_k=(\check A_k-H_k)_k$ has the same law as $(f\circ T^k(Z))_k$; in particular, $\sum_{k=1}^n H_k$ has the same law as $h(T Z)-h(T^{n+1} Z)$ and is bounded almost surely by $2\lVert h\rVert_\infty$. At last, almost surely
\[\big\lvert\sum_{k=1}^n A_k -\sum_{k=1}^n B_k\big\lvert \le \big\lvert\sum_{k=1}^n \check A_k -\sum_{k=1}^n B_k\big\lvert + \lvert \sum_{k=1}^n H_k \rvert = o(n^\lambda) + O(1)=o(n^\lambda).\]
\end{proof}

Theorem \ref{theo:mainstat} follows directly from Proposition \ref{prop:statistical} and Corollary \ref{coro:coboundary}. More generally, applying Theorem \ref{theo:coboundary} we obtain the following.
\begin{theorem}\label{theo:stat}
Assume that $T$ is an $L$-Lipschitz extension of $S$ with $(a_n)_n$-shrinking fibers, that the factor map $\pi$ is $\beta$-H\"older-continuous and that there is a Lipschitz section $\sigma:Y\to X$. 

We consider moduli of continuity $\omega_p, \omega_o, \check\omega_p, \check\omega_o$ where $p$ stand for ``potential'' and $o$ for ``observable'' and a limit theorem $\LimTh{T}\in\{\text{LIL},\text{CLT},\text{ASIP}\}$. 

If $S$ satisfies $\UE(\check\omega_p[\rho];\LimTh{T},\check\omega_o)$ and if for some constant $D>0$ and for all $r\in[0,\diam X]$ there exist some $N=N(r)\in\mathbb{N}$ such that for each $i\in\{o,p\}$:
\begin{equation}  \sum_{n>N} \omega_i(a_n) \le D\check\omega_i(r) \quad\text{and}\quad \sum_{n=0}^N \omega_i(L^n r^\beta) \le D \check\omega_i(r),
\label{eq:maincoboundary}
\end{equation}
then $T$ satisfies $\UE(\omega_p[C\rho];\LimTh{T},\omega_o)$ (with the same parameters in $\LimTh{T}$ than for $S$).
\end{theorem}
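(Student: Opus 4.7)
The plan is to assemble Theorem \ref{theo:coboundary} and Proposition \ref{prop:statistical} in a straightforward way, using the two halves of hypothesis \eqref{eq:maincoboundary} (for $i=p$ and $i=o$) to handle potentials and observables separately, under the same coboundary construction from Lemma \ref{lemm:coboundary}.

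First I would apply Theorem \ref{theo:coboundary} with modulus $\omega_p$ on $X$ and $\check\omega_p$ on $Y$, using $\omega_\pi(r) = r^\beta$ (we may assume $\beta\le 1$ so that $r^\beta \ge r$ on $[0,\diam X]\subset[0,1]$, as is implicit). The $i=p$ part of \eqref{eq:maincoboundary} is exactly condition \eqref{eq:coboundary}, so the theorem produces a continuous linear map $\varphi\mapsto\check\varphi$ from $\Hol_{\omega_p}(X)$ to $\Hol_{\check\omega_p}(Y)$, of some operator norm $C_p$, together with a bijection $\pi^*$ between equilibrium states of $\check\varphi$ for $S$ and equilibrium states of $\varphi$ for $T$. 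Setting $C := 1/C_p$, for every $\varphi\in\Hol_{\omega_p}(X)$ with $\lVert\varphi\rVert_{\omega_p}<C\rho$ we have $\lVert\check\varphi\rVert_{\check\omega_p}<\rho$; by the assumed $\UE(\check\omega_p[\rho];\LimTh{T},\check\omega_o)$ property of $S$, the potential $\check\varphi$ admits a unique equilibrium state $\mu_{\check\varphi}$, and its lift $\mu_\varphi := \pi^*\mu_{\check\varphi}$ is the unique equilibrium state of $\varphi$ for $T$.

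Next I would handle observables. Given $f\in\Hol_{\omega_o}(X)$, Lemma \ref{lemm:coboundary} applies verbatim with $f$ in the role of the potential: it produces a continuous $h$ such that $\hat f := f+h-h\circ T$ is constant on fibers. The $i=o$ half of \eqref{eq:maincoboundary} and the explicit bound in Lemma \ref{lemm:coboundary} show that $\check f := \hat f\circ\sigma$ is $\check\omega_o$-continuous, exactly as in the first step of the proof of Theorem \ref{theo:coboundary}. Since $\mu_{\check\varphi}$ satisfies $\LimTh{T}$ for every $\check\omega_o$-continuous observable, in particular for $\check f$, Proposition \ref{prop:statistical} applied with $\omega=\omega_o$, $\check\omega=\check\omega_o$ and with $\mu=\mu_\varphi$, $\check\mu=\mu_{\check\varphi}$ transfers $\LimTh{T}$ to $\mu_\varphi$ for the observable $f$, with the \emph{same} parameters (variance, exponent $\lambda$) as those of the statement for $\check\mu_{\check\varphi}$.

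The proof is more of an orchestration than a computation, so there is no real obstacle; the only bookkeeping point worth stating carefully is that a single choice of $N=N(r)$ may be used simultaneously for $i=p$ and $i=o$, so that the very same coboundary construction serves both the potential (to identify equilibrium states via Theorem \ref{theo:coboundary}) and each observable (to transport Birkhoff sums via Proposition \ref{prop:statistical}). The invariance of the parameters of $\LimTh{T}$ is built into Proposition \ref{prop:statistical}, since $\sum_{k=1}^n f\circ T^k$ and $\sum_{k=1}^n \check f\circ S^k\circ\pi$ differ by a uniformly bounded quantity.
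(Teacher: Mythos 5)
Your proposal is correct and follows essentially the same route the paper indicates: Theorem~\ref{theo:stat} is stated precisely so that Theorem~\ref{theo:coboundary} (with $\omega_\pi(r)=r^\beta$) applies to both the potential and the observable modulus, and Proposition~\ref{prop:statistical} then transports the limit theorem with the same parameters. Your observation that $\check\omega_i\gtrsim\omega_i$ follows from the $n=0$ term of \eqref{eq:maincoboundary}, and your bookkeeping with the single operator norm $C_p$ to produce $C=1/C_p$, are exactly the minor points one must check; the remark about a single $N(r)$ serving both $i=p$ and $i=o$ is true but not essential, as the two coboundary constructions are independent.
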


%%%%%%%%%%%%%%%%%%%%%%%%%%%%%%%%%%%%%%%%%%%%%%%%%%%%%%%%%%%%%%%
%%%%%%%%%%%%%%%%%%%%%%%%%%%%%%%%%%%%%%%%%%%%%%%%%%%%%%%%%%%%%%%
\section{Decay of correlations}\label{sec:decay}

The fact that strong mixing is preserved by the lift map hints to the fact that decay of correlation, which quantify mixing for regular enough observables, might also lift from $(S,\check\mu)$ to $(T,\mu)$. This Section uses the preliminary subsections \ref{sec:def-moduli}, \ref{sec:def-disintegration}, \ref{sec:def-transfer}.

Assume we have some decay of correlations for observables of a given regularity for $(S,\check\mu)$. Given $f,g:X\to\mathbb{R}$ we have $\corr{n}{\mu}(\tilde f, \tilde g)=\corr{n}{\check\mu}(\xi(f),\xi(g))$, and Lemma \ref{lemm:CorrBound} relates $\corr{n}{\mu}(f,g)$ to $\corr{n}{\mu}(\tilde f, \tilde g)$ (up to composition with $T^n$ and a shift in $n$). The crucial missing piece is to understand whether the disintegration $(\xi_y)_{y\in Y}$ preserves regularity. For example, if $f$ is H\"older does it follow that $\xi(f)$ is H\"older?

%%%%%%%%%%%%%%%%%%%%%%%%%%%%%%%%%%%%%%%%%%%%%%%%%%%%%%%%%%%%%%%
\subsection{Regularity of the disintegration}

The following is close from Proposition 3 in \cite{butterley2017disintegration}, but we do not assume a skew product structure and add continuity to the conclusion.
\begin{lemma}
Assume that $T$ is continuous and that there is a continuous section $\sigma$. Fix $\mu\in\proba_T(X)$, $\check\mu=\pi_*\mu$ and let $(\xi_y)_y$ be the disintegration of $\mu$ with respect to $\pi$ and $\chop{L}$ be the transfer operator of $(S,\check\mu)$.

Then for all $y\in Y$ and all continuous $f:X\to\mathbb{R}$ we have $\xi_y(f) = \lim_n \chop{L}^n(fT^n\sigma)(y)$. If moreover  $\chop{L}$ sends continuous functions to continuous functions, then $(\xi_y)_{y\in Y}$ preserves continuity.
\end{lemma}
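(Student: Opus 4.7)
The plan is to establish that $(\chop{L}^n(fT^n\sigma))_n$ is uniformly Cauchy on $Y$, identify the uniform limit with $\xi(f)$ by duality, and then deduce continuity under the extra hypothesis on $\chop{L}$. I will work throughout with the disintegration representation recalled in Section \ref{sec:def-transfer}: writing $\check\eta^{(n)}_y$ for the disintegration of $\check\mu$ along $S^n$ (a probability measure concentrated on $S^{-n}(y)$), we have $\chop{L}^n u(y)=\check\eta^{(n)}_y(u)$ and in particular the pointwise $L^\infty$-contraction $|\chop{L}^n u(y)|\le\lVert u\rVert_\infty$ for $\check\mu$-almost every $y$. Applied to our sequence, $\chop{L}^n(fT^n\sigma)(y)=\int f(T^n\sigma(z))\dd\check\eta^{(n)}_y(z)$.

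The core input is a shrinking-fibers estimate. Fix $z\in Y$ and $w\in S^{-k}(z)$; since $\pi T^k=S^k\pi$ and $\pi\sigma=\Id_Y$, both $T^k\sigma(w)$ and $\sigma(z)$ lie in the fiber $\pi^{-1}(z)$, so the shrinking hypothesis gives $d(T^{n+k}\sigma(w),T^n\sigma(z))=d(T^n(T^k\sigma(w)),T^n\sigma(z))\le a_n$. If $\omega_f$ is a modulus of continuity of $f$, averaging the resulting bound $|f(T^{n+k}\sigma(w))-f(T^n\sigma(z))|\le\omega_f(a_n)$ against $\check\eta^{(k)}_z$ yields $\lVert\chop{L}^k(fT^{n+k}\sigma)-fT^n\sigma\rVert_\infty\le\omega_f(a_n)$. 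Composing with $\chop{L}^n$ (using its $L^\infty$-contraction and $\chop{L}^{n+k}=\chop{L}^n\chop{L}^k$) gives
\[\lVert\chop{L}^{n+k}(fT^{n+k}\sigma)-\chop{L}^n(fT^n\sigma)\rVert_\infty\le\omega_f(a_n)\]
uniformly in $k$, so the sequence is uniformly Cauchy and converges uniformly $\check\mu$-a.e. to some bounded limit $F:Y\to\mathbb{R}$.

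To identify $F$, I would test against an arbitrary continuous $v:Y\to\mathbb{R}$: Koopman duality together with a change of variables via $\sigma$ (using $\pi\sigma=\Id_Y$) and the identity $v\circ S^n=v\circ\pi\circ T^n$ give
\[\int v\cdot\chop{L}^n(fT^n\sigma)\dd\check\mu=\int(v\circ\pi)\cdot f\,\dd(T^n_*\sigma_*\check\mu).\]
Since $\pi_*(\sigma_*\check\mu)=\check\mu$, Theorem \ref{theo:lift} yields $T^n_*\sigma_*\check\mu\to\mu$ weakly; as $(v\circ\pi)\cdot f$ is continuous, the right-hand side tends to $\int(v\circ\pi)\cdot f\dd\mu=\int v\cdot\xi(f)\dd\check\mu$ by the defining property of the disintegration. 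This forces $F=\xi(f)$ in $L^1(\check\mu)$, proving the first assertion. Finally, when $\chop{L}$ sends continuous functions to continuous functions, each $\chop{L}^n(fT^n\sigma)$ is continuous (since $fT^n\sigma$ is), so the uniform limit $F$ is continuous and exhibits the desired continuous version of $\xi(f)$. The main obstacle is the telescoping choice: applying $\chop{L}^k$ first (via the factorisation $\chop{L}^{n+k}=\chop{L}^n\chop{L}^k$) is what makes the shrinking estimate act in the fiber over $z=S^k(w)$ and produces a bound uniform in $k$; applying the operators in the reverse order loses this uniformity.
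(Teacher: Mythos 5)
Your proof is correct and follows essentially the same route as the paper's. The Cauchy estimate is identical in substance: the paper uses the identity $g=\chop{L}^m(g\circ S^m)$ together with $\lVert\chop{L}\rVert_{\infty\to\infty}\le 1$ and the observation that $T^m\sigma$ and $\sigma S^m$ land in the same fiber (so $d(T^nT^m\sigma, T^n\sigma S^m)\le a_n$), whereas you unpack the same computation through the disintegration $\check\eta^{(k)}_z$ of $\check\mu$ along $S^k$; these are two notations for the same estimate, and both yield $\lVert\chop{L}^{n+m}(fT^{n+m}\sigma)-\chop{L}^n(fT^n\sigma)\rVert_\infty\le\omega(a_n)$ uniformly in $m$.

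For the identification step the two arguments diverge mildly. The paper shows that the uniform limit actually defines a family of probability measures $(\zeta_y)_y$ concentrated on $\pi^{-1}(y)$ and integrating to $\mu$, then invokes uniqueness in the Disintegration Theorem to conclude $\zeta_y=\xi_y$ for $\check\mu$-a.e.\ $y$; this produces a genuine version of the disintegration, which is what makes the ``for all $y\in Y$'' in the statement meaningful. You instead test against an arbitrary continuous $v:Y\to\mathbb{R}$ and use Koopman duality together with $T^n_*\sigma_*\check\mu\to\mu$ (Theorem~\ref{theo:lift}); this is cleaner and shorter, but a priori only gives $F=\xi(f)$ in $L^1(\check\mu)$ with the null set depending on $f$. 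To match the statement's quantifier one should either note that $C(X)$ is separable so a single null set works for all $f$, or (as the paper does) observe directly that $y\mapsto\zeta_y:=\lim_n\chop{L}^n(\cdot\, T^n\sigma)(y)$ defines measures supported on fibers and is therefore itself a version of the disintegration. This is a small packaging point, not a gap; your duality identification and the paper's third bullet $\int\zeta_y(f)\dd\check\mu=\int f\dd\mu$ are really the same computation. The continuity conclusion is handled identically in both proofs.
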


\begin{proof}
For all $n,m\in\mathbb{N}$ and all continuous $f:X\to\mathbb{R}$, using $g=\chop{L}^m(g S^m)$, $\lVert \chop{L}(g)\rVert_\infty\le \rVert g\rVert_\infty$ and $\pi T^m\sigma(y) = \pi\sigma S^m(y)$ we have:
\begin{align*}
\big\lVert \chop{L}^{n+m}(fT^{n+m}\sigma) - \chop{L}^n(fT^n\sigma) \big\rVert_\infty
  &= \big\lVert \chop{L}^{n+m}(fT^n\circ T^m\sigma) - \chop{L}^{n+m}(fT^n\circ \sigma S^m) \big\rVert_\infty \\
  &\le \big\lVert fT^n\circ T^m\sigma - fT^n\circ \sigma S^m \big\rVert_\infty \\
  &\le \omega(a_n)
\end{align*}
where $\omega$ is a modulus of continuity of $f$ and $(a_n)_n$ is a shrinking sequence.

It follows that $\zeta_y^n := \chop{L}^n(fT^n\sigma)(y)$ converges as $n\to\infty$, uniformly in $y\in Y$, and that the limit $\zeta_y :=\lim_n \zeta_y^n$ defines for each $y\in Y$ a continuous linear form on $\C^0(X)$, i.e. a measure  on $X$. 

If $\chop{L}$ sends continuous functions to continuous functions, then for all $n\in\mathbb{N}$ the function $y\mapsto \zeta_y^n(f)$ is continuous, and by uniform convergence so is $y\mapsto \zeta_y(f)$.

We have left to check that $(\zeta_y)_{y\in Y}$ coincides with $(\xi_y)_{y\in Y}$ on a set of full $\check\mu$ measure. Since $\chop{L}^n(\one T^n\sigma) = \one$ and $\chop{L}^n(f T^n\sigma) \ge 0$ whenever $f\ge 0$, $\zeta_y$ is a probability measure for each $y$. If $f\equiv 0$ on $\pi^{-1}(y)$, then $f T^n\sigma\equiv 0$ on $S^{-n}(y)$ and $\chop{L}^n(f T^n\sigma)(y) = 0$, so that $\zeta_y(f)=0$; i.e. $\zeta_y$ is concentrated on $\pi^{-1}(y)$. Last, 
\[\int \zeta_y(f) \dd\check\mu  
  = \lim_n \int \chop{L}^n(f T^n\sigma)  \dd\check\mu
  = \lim_n \int f T^n\sigma \dd\check\mu
  = \int f \dd\big(\lim_n T_*^n(\sigma_*\check\mu)\big)
  = \int f \dd\mu 
\]
and by uniqueness in the disintegration theorem, $\zeta_y=\xi_y$ for $\check\mu$-almost all $y\in Y$.
\end{proof}

We shall now consider functions $f:X\to\mathbb{R}$ with a specified amount of regularity, i.e. $f\in\Hol_\omega(X)$ for some modulus $\omega$. We will need a stronger hypothesis on the transfer operator of $S$.
\begin{lemma}\label{lemm:regularity}
Assume that $\chop{L}$ is iteratively bounded with respect to $\omega$.
Then for all $f\in\Hol_\omega(X)$ there is a version of $\xi(f)$ such that for all $y,y'\in Y$ and all $k,n\in\mathbb{N}$:
\[ \lvert \xi_y(f T^k) -\xi_{y'}(f T^k) \rvert \le 2\Hol_\omega(f) \omega(a_{n+k}) + C \Hol_{\omega}(fT^{n+k}\sigma) \omega(d(y,y')).\]
\end{lemma}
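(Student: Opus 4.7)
The plan is to pick the canonical version of $\xi(fT^k)$ provided by the previous lemma, namely $\xi_y(fT^k) = \lim_{m\to\infty} \chop{L}^m(fT^{m+k}\sigma)(y)$ (this limit exists uniformly in $y$, as shown in the proof of the previous lemma, since $f\in\Hol_\omega(X)$ is in particular continuous). The estimate will then follow from a single triangle inequality with pivot $\chop{L}^n(fT^{n+k}\sigma)$ evaluated at $y$ and at $y'$:
\[
|\xi_y(fT^k) - \xi_{y'}(fT^k)| \le |\xi_y(fT^k) - \chop{L}^n(fT^{n+k}\sigma)(y)| + |\chop{L}^n(fT^{n+k}\sigma)(y)-\chop{L}^n(fT^{n+k}\sigma)(y')| + |\chop{L}^n(fT^{n+k}\sigma)(y') - \xi_{y'}(fT^k)|.
\]
The middle term is easy: by iterative boundedness of $\chop{L}$ with respect to $\omega$, there exists $C$ such that $\Hol_\omega(\chop{L}^n h)\le C\Hol_\omega(h)$ for all $n$ and all $h\in\Hol_\omega(Y)$, so that term is bounded by $C\Hol_\omega(fT^{n+k}\sigma)\omega(d(y,y'))$.

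The heart of the argument is to show that each of the outer terms is bounded by $\Hol_\omega(f)\omega(a_{n+k})$ — this is where the improvement from $a_k$ to $a_{n+k}$ in the final bound comes from, and it is the only delicate step. Adapting the telescoping computation from the previous lemma (with $g=fT^k$), the key identity is
\[
\chop{L}^{n+j}((fT^k)T^{n+j}\sigma) - \chop{L}^n((fT^k)T^n\sigma) = \chop{L}^{n+j}\big( fT^{n+k}\circ T^j\sigma - fT^{n+k}\circ \sigma S^j\big),
\]
where we used $\chop{L}^j(h\circ S^j)=h$ (as $\check\mu$ is $S$-invariant). Since $\chop{L}$ is Markov, $\|\chop{L}\|_\infty\le 1$, so it suffices to bound $\|fT^{n+k}\circ T^j\sigma - fT^{n+k}\circ \sigma S^j\|_\infty$ uniformly in $j$.

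At any $y\in Y$, the points $T^j\sigma(y)$ and $\sigma S^j(y)$ both lie in $\pi^{-1}(S^jy)$, hence on a common fiber; applying $T^{n+k}$ to them therefore gives two points at distance at most $a_{n+k}$ by the shrinking fibers property. Consequently
\[
|f(T^{n+k}T^j\sigma y) - f(T^{n+k}\sigma S^j y)| \le \Hol_\omega(f)\,\omega(a_{n+k}),
\]
uniformly in $j$ and $y$. Letting $j\to\infty$ yields $|\xi_y(fT^k) - \chop{L}^n(fT^{n+k}\sigma)(y)|\le \Hol_\omega(f)\omega(a_{n+k})$ at every $y$, and the same for $y'$. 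Assembling the three pieces gives the claimed inequality. The main (really the only) obstacle is recognizing that by factoring $T^{n+m+k}\sigma = T^{n+k}\circ T^m\sigma$ — rather than $T^k\circ T^{n+m}\sigma$ — one exploits the shrinking of the fiber $\pi^{-1}(S^m y)$ under $n+k$ iterations, so the error decays with $a_{n+k}$ instead of merely $a_k$.
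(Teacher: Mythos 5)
Your proof is correct, and it takes a genuinely different route from the paper's. The paper bounds the two outer terms by a coupling argument entirely at the level of measures: it takes the (unique) transport plan $\gamma = (\Id,\sigma\pi)_*\mu \in \Gamma_\pi(\mu,\sigma_*\check\mu)$, pushes it forward by $(T^n,T^n)$ to get a coupling $\gamma^n\in\Gamma_\pi(\mu,T^n_*\sigma_*\check\mu)$, disintegrates $\gamma^n$ over $Y$ to obtain for ($\check\mu$-a.e.) $y$ a coupling $\eta^n_y\in\Gamma(\xi_y,\zeta^n_y)$, and then estimates $\lvert\xi_y(fT^k)-\zeta^n_y(fT^k)\rvert$ by integrating $\lvert f(x)-f(x')\rvert$ against $(T^k,T^k)_*\eta^n_y$, using that $\eta^n_y$-a.e. $(x,x')$ is of the form $(T^n w,T^n w')$ with $w,w'$ in a common fiber — giving the distance $a_{n+k}$ after $k$ more iterations. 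You instead stay at the level of functions, telescoping the sequence $\chop{L}^{n+j}(fT^{n+j+k}\sigma)$ via the identity $\chop{L}^j(h\circ S^j)=h$ and controlling the difference by the sup-norm contraction $\lVert\chop{L}\rVert_\infty\le 1$ together with the observation that $T^j\sigma(y)$ and $\sigma S^j(y)$ lie in the same fiber $\pi^{-1}(S^jy)$, so $T^{n+k}$ brings them within $a_{n+k}$. The key geometric observation is exactly the same in both proofs — the improvement from $a_k$ to $a_{n+k}$ comes from factoring out $T^{n+k}$ applied to two points on a common fiber — but your implementation sidesteps the transport-plan machinery. One advantage of your version is that it yields the estimate directly for \emph{every} $y$ (not merely $\check\mu$-a.e.), since you work explicitly with the canonical version $\xi_y(fT^k)=\lim_m\chop{L}^m(fT^{m+k}\sigma)(y)$ from the previous lemma; the paper's coupling disintegration only identifies $\eta^n_y$ as a coupling of $\xi_y,\zeta^n_y$ almost everywhere, and the passage to all $y$ is left implicit. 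The trade-off is that the paper's version is more in the spirit of the optimal-transport framework it develops elsewhere (e.g.\ Lemma \ref{lemm:vertical}, Theorem \ref{theo:lift}), whereas yours is a more self-contained transfer-operator computation.
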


\begin{proof}
Set as above $\zeta^n_y(f) = \chop{L}^n(fT^n\sigma)(y)$ for all $y\in Y$. Then $(\zeta^n_y)_{y\in Y}$ is the disintegration of $T^n_*(\sigma_*\check\mu)$, while $(\xi_y)_y$ is the disintegration of $\mu=T^n_*\mu$. There exist $\gamma\in\Gamma_\pi(\mu,\sigma_*\check\mu)$ (actually $\gamma$ is unique, equal to $(\Id,\sigma\pi)_*\mu$), and $\gamma^n := (T^n,T^n)_*\gamma$ is in $\Gamma_\pi(\mu,T^n_*(\sigma_*\check\mu))$. Let $(\eta_y^n)_{y\in Y}$ be the disintegration of $\gamma^n$ with respect to the map $\Delta_\pi \to Y$ sending $(x,x')$ to $\pi(x)=\pi(x')$. Then for $\check\mu$-almost all $y$, $\eta_y^n \in\Gamma(\xi_y,\zeta^n_y)$ and for $\eta_y^n$-almost all $(x,x')$ we have $x=T^n(w)$ and $x'=T^n(w')$ for some $w,w'$ with $\pi(w)=\pi(w')$. Now we get
\begin{align*}
\lvert \xi_y(f T^k) - \zeta^n_y(f T^k) \rvert
  &= \Big\lvert\int f T^k(x) \dd \eta_y^n(x,x') - \int f T^k(x') \dd \eta_y^n(x,x') \Big\rvert \\
  &= \Big\lvert\int (f(x) - f(x')) \dd\big((T^k,T^k)_*\eta_y^n\big)(x,x') \Big\rvert \\
  &\le \Hol_\omega(f) \int  \omega(d(x,x')) \dd\big((T^k,T^k)_*\eta_y^n\big)(x,x') \\
  &\le \Hol_\omega(f) \omega(a_{n+k})
\end{align*}
since for $(T^k,T^k)_*\eta_y^n$-almost all $(x,x')$, $x=T^{k+n}(w)$ and $x'=T^{n+k}(w')$ for some $w,w'$ in the same fiber. %(note that $(T_*^k\xi_y)_y$ is \emph{not} the disintegration of $\mu$)
We then have
\begin{align*}
\lvert \xi_y(f T^k)-\xi_{y'}(f T^k) \rvert 
  &\le \lvert \xi_y(f T^k) - \zeta^n_y(f T^k) \rvert + \lvert \zeta^n_y(f T^k) - \zeta^n_{y'}(f T^k) \rvert \\
  &\qquad+ \lvert \zeta^n_{y'}(f T^k) -\xi_{y'}(f T^k) \rvert \\
  &\le 2\Hol_\omega(f) \omega(a_{n+k}) + \lvert \chop{L}^n(fT^{n+k} \sigma)(y) - \chop{L}^n(fT^{n+k} \sigma)(y') \rvert \\
  &\le  2\Hol_\omega(f) \omega(a_{n+k}) + C \Hol_{\omega}(fT^{n+k}\sigma) \omega(d(y,y')).
\end{align*}
\end{proof}

\begin{theorem}[Regularity of Disintegrations]\label{theo:regularity}
Let $T$ be a $L$-Lipschitz extension of $S$ with $(a_n)_n$-shrinking fibers, and assume that there is a Lipschitz section $\sigma$. Let $\alpha\in (0,1]$ and let $\mu\in\proba_T(X)$, $\check\mu=\pi_*\mu$.
\begin{enumerate}
\item\label{enumi:regularity1} If the transfer operator $\chop{L}$ of $(S,\check\mu)$ is iteratively bounded with respect to $\hol{\alpha}$ and if fibers are exponentially shrinking with ratio $\theta\in (0,1)$, then the disintegration $\xi$ of $\mu$ with respect to $\pi$  is $(\hol{\alpha},\hol{\beta})$-bounded for $\beta = \frac{\alpha}{1-\log L/\log \theta}$.
\item\label{enumi:regularity3} If the transfer operator $\chop{L}$ of $(S,\check\mu)$ is iteratively bounded with respect to $\hol{\alpha}$ and if $(a_n)_n$ is polynomial of degree $d$, then $(\xi_y)_{y\in Y}$ is $(\hol{\alpha},\holl{\alpha d})$-bounded, and moreover the maps
\begin{align*}
\mathcal{D}_k : \Hol_{\alpha}(X) &\to \Hol_{\alpha d\log}(Y) \\
              f  &\mapsto \xi(fT^k)
\end{align*}
have operator norm bounded above by $Ck^{\alpha d}$.
\item\label{enumi:regularity2} If the transfer operator $\chop{L}$ of $(S,\check\mu)$ is iteratively bounded with respect to $\holl{\alpha}$ and if $(a_n)_n$ is exponential, then $(\xi_y)_{y\in Y}$ is  $(\holl{\alpha},\holl{\frac\alpha2})$-bounded.
\end{enumerate}
\end{theorem}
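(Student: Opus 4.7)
The main tool is Lemma~\ref{lemm:regularity}, which for any choice of $n\in\mathbb{N}$ supplies the pointwise estimate
\[\lvert \xi_y(fT^k) - \xi_{y'}(fT^k) \rvert \le 2\Hol_\omega(f)\,\omega(a_{n+k}) + C\,\Hol_\omega(fT^{n+k}\sigma)\,\omega(d(y,y')),\]
where the constant $C$ is uniform in $n$ thanks to the iterative boundedness of $\chop{L}$. The strategy is to balance the two summands by optimizing $n$ as a function of $r := d(y,y')$ (and of $k$ for the $\mathcal{D}_k$ statement). I first control the factor $\Hol_\omega(fT^{n+k}\sigma)$: since $T^{n+k}\sigma$ is $L^{n+k}\mathrm{Lip}(\sigma)$-Lipschitz, in the H\"older case we get $\Hol_\alpha(fT^{n+k}\sigma) \lesssim L^{\alpha(n+k)}\Hol_\alpha(f)$, while in the $\alpha\log$-H\"older case Proposition~\ref{prop:alphalog} gives $\Hol_{\alpha\log}(fT^{n+k}\sigma) \lesssim (n+k)^{\alpha}\Hol_{\alpha\log}(f)$. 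The sup-norm part of each operator bound, $\lVert \xi(fT^k) \rVert_\infty \le \lVert f \rVert_\infty$, is immediate, so only the modulus of continuity requires work.

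For item~\ref{enumi:regularity1}, taking $k=0$ the lemma reduces the right-hand side to $C\Hol_\alpha(f)\bigl[\theta^{\alpha n} + L^{\alpha n}r^\alpha\bigr]$. Equating the two summands by $(L/\theta)^{\alpha n} = r^{-\alpha}$ gives $n\asymp -\log r/\log(L/\theta)$; substitution shows that both terms have common order $r^\beta$ with $\beta = \alpha\log\theta/(\log\theta-\log L) = \alpha/(1 - \log L/\log\theta)$, as claimed. For item~\ref{enumi:regularity3}, again with $k=0$, the lemma delivers $C\Hol_{\alpha\log}(f)\bigl[\holl{\alpha}(\theta^n) + n^\alpha\holl{\alpha}(r)\bigr]$; using $\holl{\alpha}(\theta^n)\asymp n^{-\alpha}$, the balance $n^{-\alpha} = n^\alpha\holl{\alpha}(r)$ yields $n\asymp (\log 1/r)^{1/2}$ and the common value $\asymp (\log 1/r)^{-\alpha/2} = \holl{\alpha/2}(r)$, exactly the target modulus.

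Item~\ref{enumi:regularity2} is the delicate one because the operator norm of $\mathcal{D}_k$ must grow only polynomially in $k$. Writing $N = n+k$, the lemma yields $C\Hol_\alpha(f)\bigl[N^{-\alpha d} + L^{\alpha N}r^\alpha\bigr]$. For $r$ small enough that $N := \lfloor(-\log r - d\log(-\log r))/\log L\rfloor$ satisfies $N\ge k$, this choice makes $L^{\alpha N}r^\alpha \asymp (-\log r)^{-\alpha d} \asymp \holl{\alpha d}(r)$ while simultaneously $N^{-\alpha d} \asymp \holl{\alpha d}(r)$, so the ratio $\lvert\xi_y(fT^k)-\xi_{y'}(fT^k)\rvert/\holl{\alpha d}(r)$ is bounded by $C\Hol_\alpha(f)$ independently of $k$. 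For larger $r$ (roughly $r \gtrsim L^{-k}$) the optimal $N$ falls below $k$ and is unavailable; there I would instead use the crude bound $\lvert\xi_y(fT^k)-\xi_{y'}(fT^k)\rvert \le 2\lVert f\rVert_\infty$, which divided by $\holl{\alpha d}(r)$ is $\lesssim \lVert f\rVert_\infty/\holl{\alpha d}(L^{-k}) \asymp k^{\alpha d}\lVert f\rVert_\infty$. Combining the two regimes gives the announced operator norm $\lesssim 1+k^{\alpha d}$, and the $k=0$ specialization recovers the $(\hol{\alpha},\holl{\alpha d})$-boundedness of $\xi$.

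The main obstacle is precisely this two-regime argument in item~\ref{enumi:regularity2}: the naive bound from the lemma grows like $L^{\alpha k}$ because of the Lipschitz constant of $T^{k}$, and replacing this exponential dependence by the polynomial rate $k^{\alpha d}$ requires trading against the trivial sup-norm estimate at the threshold $r\asymp L^{-k}$. Items~\ref{enumi:regularity1} and~\ref{enumi:regularity3} are then essentially one-line balancing computations once the correct modulus and shrinking sequence are substituted into the lemma.
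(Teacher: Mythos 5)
Your argument is correct and matches the paper's proof almost exactly: in every case you balance the two summands of Lemma~\ref{lemm:regularity} by an optimal choice of $n$ (or of $m=n+k$), and you handle the operator-norm bound for $\mathcal{D}_k$ with the same two-regime argument, falling back on the trivial oscillation estimate $\lvert\xi_y(fT^k)-\xi_{y'}(fT^k)\rvert\le 2\lVert f\rVert_\infty$ once $r\gtrsim L^{-k}$. One slip worth fixing: the paper's labels are out of numerical order (the tag \emph{regularity3} marks the second, polynomial-shrinking item and \emph{regularity2} marks the third, $\log$-H\"older item), so your two \ref's to them are interchanged --- the computation you present under ``item \ref{enumi:regularity3}'' actually proves item \ref{enumi:regularity2}, and the $\mathcal{D}_k$ argument you present under ``item \ref{enumi:regularity2}'' proves item \ref{enumi:regularity3}; the mathematics in each block is right for the case it treats, only the references need swapping.
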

Note that the norm bound in item \ref{enumi:regularity3} is not a specific feature but will be needed later, as the trivial bound of $CL^{\alpha k}$ is much too weak in this case.

Note that Butterley and Melbourne \cite{butterley2017disintegration} (Proposition 6) obtain the better exponent $\beta=\alpha$ in item \ref{enumi:regularity1}, but only in a restricted setting.

\begin{proof}
We apply Lemma \ref{lemm:regularity} three times.

For item \ref{enumi:regularity1}, we take $\omega=\hol{\alpha}$, $k=0$ and get for all $f\in\Hol_\alpha(X)$; $y,y'\in Y$; $k,n\in\mathbb{N}$: 
\[\lvert\xi_y(f)-\xi_{y'}(f) \rvert \lesssim \Hol_\alpha(f)(\theta^{\alpha n} + L^{\alpha n} d(y,y')^\alpha ) 
.\]
Taking $n = \frac{\beta \log d(y,y')}{\alpha\log\theta}$ it comes $\theta^{\alpha n} \simeq L^{\alpha n} d(y,y')^\alpha \simeq d(y,y')^\beta$.

For item \ref{enumi:regularity3} we take $\omega=\hol{\alpha}$ and consider arbitrary $k$ to get the norm estimate. Given $y\neq y'\in Y$ we would like to choose $m=n+k= -c\log d(y,y') +O(1)$ for some small constant $c>0$ to be specified later on. This is possible whenever $k\le-c\log d(y,y')$; in this case we get $L^{\alpha m} \simeq d(y,y')^{-c\alpha\log L}$ and thus
\[\lvert\xi_y(fT^k)-\xi_{y'}(fT^k) \rvert \lesssim \Hol_\alpha(f)\big( \holl{\alpha d}(d(y,y')) + d(y,y')^{\alpha(1-c\log L)} \big).\]
Choosing $c<1/\log L$ ensures the last term above is (much) less than $\holl{\alpha d}(d(y,y'))$. We are left with the case $k>-c\log d(y,y')$, but then $1 \le k/(c\log(1/d(y,y')))$ and
\[\lvert\xi_y(f)-\xi_{y'}(f) \rvert \le \sup f - \inf f \lesssim \Hol_\alpha(f) \lesssim \Hol_\alpha(f) k^{\alpha d} \holl{\alpha d}(d(y,y')).\]

For item \ref{enumi:regularity2}, we take $\omega=\holl{\alpha}$, $k=0$ and get $\omega(a_n) \simeq n^{-\alpha}$. Given $y\neq y'\in Y$ we choose $n \simeq \holl{\frac12}(d(y,y')) \simeq (-\log d(y,y'))^{\frac{1}{2}}$ so that by Proposition \ref{prop:alphalog}, 
\[\Hol_{\alpha\log}(fT^n\sigma) \lesssim (-\log d(y,y'))^{\frac{\alpha}{2}} \Hol_{\alpha\log}(f);\]
then Lemma \ref{lemm:regularity} yields as desired
$\lvert\xi_y(f)-\xi_{y'}(f) \rvert \lesssim \Hol_{\alpha\log}(f)\, \holl{\frac\alpha2}(d(y,y'))$.
\end{proof}

%%%%%%%%%%%%%%%%%%%%%%%%%%%%%%%%%%%%%%%%%%%%%%%%%%%%%%%%%%%%%%%
\subsection{Lifting decay of correlations}

Combining Lemma \ref{lemm:CorrBound} with Theorem \ref{theo:regularity}, we can finally lift decay of correlations.
\begin{proof}[Proof of Theorem \ref{theo:maindecay}]
For item \ref{enumi:decay1}, we start from $f\in\Hol_\alpha(X)$ of zero $\mu$-average and $g\in L^1(\mu)$; then $f\in\Hol_{\alpha'}(X)$ for all $\alpha'\le \alpha$, with $\Hol_{\alpha'}(f)\le \Hol_\alpha(f)$. Up to choosing a smaller $\alpha$, the hypothesis on the transfer operator enable us to assume that $\chop{L}$ has a spectral gap on $\Hol_\alpha(Y)$, and is thus $\hol{\alpha}$-iteratively bounded. From Theorem \ref{theo:regularity}, we get that $\xi(fT^k)$ is in $\Hol_\beta(Y)$ for some $\beta\in(0,\alpha)$, with \[\Hol_\beta(\xi(fT^k))\lesssim \Hol_\alpha(f T^k) \lesssim L^{\alpha k} \Hol_\alpha(f)\]
where $L$ is a Lipschitz constant for $T$. Lemma \ref{lemm:CorrBound} then yields for all $k,m\in\mathbb{N}$:
\[\corr{k+m}{\mu}(f,g) \lesssim (1-\delta)^m L^{\alpha k}\Hol_\alpha(f)\lVert \xi(g)\rVert_{L^1(\check\mu)} + \theta^{\alpha k} \Hol_\alpha(f)\lVert g\rVert_{L^1(\mu)}\]
where $\delta$ is the spectral gap of $\chop{L}$ and $\theta$ is the ratio of shrinking (recall $\lVert \xi(g)\rVert_{L^1(\check\mu)}= \lVert g\rVert_{L^1(\mu)}$). Taking sequences $k_n = tn + O(1)$ and $m_n = (1-t)n+O(1)$ summing to $n$ with $t\in(0,1)$ small enough provides exponential decay of $\corr{n}{\mu}(f,g)$.

For \ref{enumi:decay2}, we start again from $f\in\Hol_\alpha(X)$ of zero $\mu$-average and $g\in L^1(\mu)$; by hypothesis $\chop{L}$ is iteratively $\hol{\alpha}$-bounded and $\check\mu$ has polynomial decay of correlations of degree $p$ for $\alpha d\log$-H\"older observables. Theorem \ref{theo:regularity} ensures that $\xi(f T^k)$ is in $\Hol_{\alpha d \log}(Y)$ with norm at most $Ck^{\alpha d}$. Then Lemma \ref{lemm:CorrBound} yields for all $k,m\in\mathbb{N}$:
\[\corr{k+m}{\mu}(f,g) \lesssim \Hol_\alpha(f) \lVert g\rVert_{L^1(\mu)}\Big(\frac{k^{\alpha d}}{m^p} + \frac{1}{k^{\alpha d}}\Big).\]
Given $n$, to optimize over pairs $(k,m)$ such that $k+m=n$, one is led to make both terms of the same order of magnitude, i.e. to take $k\simeq (n-k)^{\frac{p}{2\alpha d}}$. If $p> 2\alpha d$, then $m\ll k$ and thus $k\simeq n$, and we get a polynomial decay of degree $\alpha d$. If $p< 2\alpha d$, then $k \ll m$ and thus $m\simeq n$, $k\simeq m^{\frac{p}{2\alpha d}}$ and we get a polynomial decay of degree $p/2$. If $p= 2\alpha d$, we take $k\simeq m$ both of the same order than $n$ and we get a polynomial decay of correlations of degree $\alpha d = p/2$.

For item \ref{enumi:decay3}, we start from $f\in\Hol_{\alpha \log}(X)$ of zero $\mu$-average and $g\in L^1(\mu)$; by hypothesis $\chop{L}$ is iteratively $\holl{\alpha}$-bounded and $\check\mu$ has polynomial decay of correlations of degree $p$ in $\Hol_{\frac\alpha2\log}(Y)$. Theorem \ref{theo:regularity} ensures that $\xi(fT^k)$ is in $\Hol_{\frac{\alpha}{2}}(Y)$ with norm at most $C\Hol_{\alpha \log}(f T^k)\lesssim k^\alpha  \Hol_{\alpha \log}(f)$ (Proposition \ref{prop:alphalog}). Then Lemma \ref{lemm:CorrBound} yields for all $k,m\in\mathbb{N}$:
\[\corr{k+m}{\mu}(f,g) \lesssim \Hol_{\alpha\log}(f) \lVert g\rVert_{L^1(\mu)}\Big(\frac{k^{\alpha }}{m^p} + \frac{1}{k^{\alpha }}\Big)\]
and, as above, we get $\corr{n}{\mu}(f,g) \lesssim \Hol_{\alpha\log}(f) \lVert g\rVert_{L^1(\mu)} / n^{\min(\alpha, p/2)}$. 
\end{proof}

\section{Proofs of corollaries given in Introduction}\label{sec:proofs}

\begin{proof}[Proof of Corollary \ref{coro:mainPM}]
To prove the first part of item \ref{enumi:PM2}, observe that the unique absolutely continuous invariant measure $\check\mu_S$ of $S_q$ is ergodic and has full support \cite{thaler1980estimates}, and it must thus be the unique physical measure; then Theorem \ref{theo:mainLift} implies that the unique lift $\mu_T$ of $\check\mu_S$ is the unique physical measure of $T$. To obtain the ASIP, consider an $\alpha$-H\"older observable $f$. By Corollary \ref{coro:coboundary}, for some $\gamma>0$ there exists a $\gamma$-H\"older observable $\check f:\mathbb{T}\to\mathbb{R}$ such that $\hat f:=\check f\circ \pi$ differs from $f$ by a coboundary. Then as in Proposition \ref{prop:statistical}, the ASIP for $(T,\mu_T,f)$ follows from the ASIP for $(S_q,\check\mu,\check f)$, which has been proved by Melbourne and Nicol \cite{MN05} (the return time $\mathcal{R}$ of $S_q$ to $[\frac12,1]$ is $n$ on an interval of size $\simeq 1/n^{1+\frac1q}$, so that when $q<\frac12$ we have $\mathcal{R}$ in $L^{2+\delta}$ for some $\delta>0$).
Note that ASIP with better rates have been obtained recently by Cuny, Dedecker, Korepanov and Merlev{\`e}de for intermittent maps \cite{cuny2018rates}; Theorem \ref{theo:stat} enable to lift these rates.

For item \ref{enumi:PM1}, Theorem A of \cite{kloeckner2017optimal} states that $S_q$ satisfies $\UE(\hol{\gamma},\mathrm{CLT},\hol{\ast})$ for all $\gamma>q$ and that the transfer operator of $\gamma$-H\"older potentials have a spectral gap on H\"older spaces of small enough exponent (use \cite{T-K05} for the CLT, see the comment below Corollary F in \cite{kloeckner2017optimal}). Theorem \ref{theo:mainstat}, item \ref{enumi:mainstat1} (where $\beta=1$: $\pi$ is Lipschitz) then ensures that $T$ satisfies $\UE(\hol{\alpha},\mathrm{CLT},\hol{\ast})$ for all $\alpha > q'$. The decay of correlation follows from Theorem \ref{theo:maindecay}.
\end{proof}

\begin{proof}[Proof of Corollary \ref{coro:mainpoly}]
% It follows from Theorems \ref{theo:mainstat} and \ref{theo:maindecay} above and Theorem E of \cite{kloeckner2017optimal}. 
Setting $\alpha'=\alpha d-1>1$, Theorem E of \cite{kloeckner2017optimal} shows that:
\begin{enumerate}
\item the transfer operator associated to a $\alpha'\log$-H\"older potential $\check\varphi$, defined by
\[\chop{L}_{\check\varphi}f(z) = \sum_{z\in S^{-1}(y)} e^{\varphi(y)} f(y) \]
acts on $(\alpha'-1)\log$-H\"older observables; it can be normalized, i.e. up to adding to $\check\varphi$ a constant and a coboundary, one can assume $\chop{L}_{\check\varphi} \one= \one$; and once normalized there is a unique $S$-invariant probability measure $\check\mu_{\check\varphi}$ that is also fixed by the dual operator $\chop{L}_{\check\varphi}^*$,
\item the transfer operator decays polynomially with degree $\alpha'-1$ in the uniform norm for all $u\in\Hol_{(\alpha'-1)\log}(Y)$ such that $\check\mu_{\check\varphi}(u)=0$, i.e. $\lVert \chop{L}_{\check\varphi}^n u\rVert_\infty \lesssim \frac{\Hol_{(\alpha'-1)\log}(u)}{n^{\alpha'-1}} $;
\item when $\alpha'>3/2$, using as above \cite{T-K05}, $\check\mu_{\check\varphi}$ satisfies the CLT for all $(\alpha'-1)\log$-H\"older observables.
\end{enumerate}
While that is not stated in \cite{kloeckner2017optimal}, $\check\mu_{\check\varphi}$ is the unique equilibrium state for $\check\varphi$ (see Ledrappier \cite{ledrappier1974principe} and Walters \cite{walters1975ruelle} -- the statements there are for one-sided subshifts of finite type, but the assumption really used is the existence of a one-sided  generator, which holds here), so that
$S$ satisfies $\UE(\holl{\alpha'},\varnothing)$, and when $\alpha'>3/2$ it also satisfies $\UE(\holl{\alpha'},\mathrm{CLT},\holl{(\alpha'-1)})$.
Theorem \ref{theo:mainstat} enables us to deduce
for $T$ both $\UE(\hol{\alpha};\varnothing)$ when $\alpha>2/d$, and  $\UE(\hol{\alpha};\mathrm{CLT},\hol{\alpha-\frac1d})$ when $\alpha>5/(2d)$ (take $\gamma=\alpha-1/d$, so that $\gamma'=\alpha'-1$). 

Since the transfer operator of a uniformly expanding map with respect to the equilibrium state of a H\"older potential is well-known to have a spectral gap (and thus is iteratively $\hol{\gamma}$-bounded),  Theorem \ref{theo:maindecay} item \ref{enumi:decay2} applies (with the exponent $\gamma=\alpha-1/d$ instead of $\alpha$, and $p=\alpha d-2$), implying a polynomial rate of decay of correlations of degree $\frac{\alpha d}{2}-1$.
\end{proof}

\begin{proof}[Proof of Corollary \ref{coro:mainlog}]
By \cite{kloeckner2017optimal} (see also  \cite{FJ1,FJ2}), $S$ has a unique absolutely continuous measure $\check\mu$, which has polynomial decay of correlations of degree $(\alpha-1)$ for all $(\alpha-1)$-log H\"older observables (in particular, it is ergodic). Let $\mu_T$ be the unique $T$-invariant lift of $\check\mu$ provided by Theorem \ref{theo:mainLift}: then $\mu_T$ is physical, and since $\check\mu$ is also the unique physical measure of $S$, $T$ admits no other physical measure. Better still, its basin of attraction is the inverse image by $\pi$ of the basin of attraction of $\check\mu$ (Corollary \ref{coro:StableLeaf}) thus by Fubini's theorem has full volume.

Moreover, in \cite{kloeckner2017optimal} it is shown that the transfer operator of $S$ for the geometric potential $\varphi_S(y) = -\log \det(DS_y)$ (or any other $\alpha\log$-H\"older potential) has polynomial decay of degree $\gamma=\alpha-1$ in the uniform norm for all $\gamma$-log H\"older observables (implying the Central Limit Theorem as soon as $\alpha>3/2$).
If $T$ has exponentially shrinking fibers, then we can apply the last item of Theorem \ref{theo:maindecay} to $(2\alpha-2)\log$-H\"older observables with $p=\gamma=\alpha-1$ to obtain the desired decay of correlation, and the last item of Theorem \ref{theo:mainstat} to get the Central Limit theorem for $(2\alpha-1)\log$-H\"older observables.
\end{proof}

\begin{proof}[Proof of Corollary \ref{coro:mainSRB}]
We construct $T$ as a Smale DE (``derived from expanding'') example \cite{smale1967differentiable}. As their name indicates, DE examples start from an expanding map of a manifold; we will take $S:\mathbb{T}\to\mathbb{T}$ a uniformly expanding circle map of class $\C^1$ (since we start from a one-dimensional base map, this kind of example can be called a ``solenoidal'' example: the attractor will topologically be a solenoid). For some $\lambda>1$, we have $S'(y) \ge \lambda$ for all $y\in\mathbb{T}$; we then take a skew product 
\begin{align*}
T:\mathbb{T}\times D^2 &\to \mathbb{T}\times D^2 \\
  (y,z) &\mapsto (S(y),R(y,z))
\end{align*}
where $D^2$ is the open unit disk of $\mathbb{R}^2$, $R$ is smooth and chosen so that 
\begin{itemize}
\item $T$ is a diffeomorphism onto its image (i.e. for all $y\in\mathbb{T}$, $R(y,\cdot):D^2\to D^2$ is a diffeomorphism onto its image  and whenever $y,y'\in\mathbb{T}$ have the same image under $S$, $R(y\cdot)$ and $R(y',\cdot)$ are disjoint),
\item we moreover assume that whenever $S(y)=S(y')$, the images of $R(y,\cdot)$ and $R(y',\cdot)$ have disjoint closures; in particular the closure in $\mathbb{T}\times D^2$ of the image $\mathrm{Im}(T)$ is compact,
\item $\lVert D_z R(y,z) \rVert \le \lambda^{-1}$, in particular $T$ is an extension of $S$ with exponentially shrinking fibers.
\end{itemize}
We identify $\mathbb{T}\times D^2$ with an open subset $U$ of $\mathbb{R}^3$ (e.g. a solid torus of revolution, with the angle of cylindrical coordinates corresponding to the $y$ variable). By assumption, $\Lambda=\bigcap_n T^n(U)$ is a compact attractor, and one checks easily that the restriction of the projection $\pi$ to $\Lambda$ is still onto $\mathbb{T}$; we denote this restriction by the same letter $\pi$.

We first check that $\Lambda$ is uniformly hyperbolic (this argument is classical and can be skipped by the experienced reader). The stable bundle is trivially constructed over the whole of $U$ as $E^s_x = \{0\} \times T_z D^2$ (where $x=(y,z)$), and the main point is to find a transversal bundle $E^u$ that is $T$-invariant. We consider the space of all continuous $1$-dimensional sub-bundles $E\subset T_{\Lambda}U$ transversal to $E^s$; such a bundle is parametrized by a field $(L^E_x)_{x=(y,z)\in\Lambda}$ of linear maps $T_y\mathbb{T} \to T_zD^2$, simply setting $E_{x}=\{(u,L_x(u))\in T_xU : u\in T_y \mathbb{T}\}$ (i.e. $L_x(u)$ is the unique $v\in T_zD^2$ such that $u+v\in E_x$), and we obtain a complete metric by using the operator norm: $d(E,F)= \max_x \lVert L^E_x-L^F_x \rVert$. Now the facts that $S$ is at least $\lambda$-expanding and that $\lVert D_zR\rVert \le \lambda^{-1}$ ensure that $T$ acts on this space of bundles as a contraction: writing $x'=(y',z')=T^{-1}(x)$, the definition $(T_*E)_x = DT_{x'}(E_{x'})$ translates as $(T_*L)_x(u) = D_z R_{x'} \circ L_{x'} (DS_{x'}^{-1}(u))$, so that $d(T_*E,T_*F) \le \lambda^{-2} d(E,F)$. There is thus a unique $T$-invariant continuous sub-bundle transverse to $E^s$. Up to changing the Riemannian metric, we can make it coincide on $E^u$ with the pull-back of the metric of $\mathbb{T}$; then $DT$ is at least $\lambda$-expanding along $E^u$ in this metric, so that $\Lambda$ is uniformly hyperbolic.

Now the usual theory ensures we have an unstable lamination $W^u$ of $\Lambda$ (the stable foliation is trivial, its connected components of leaves being the vertical slices $\{y\}\times D^2$), and the definition of an SRB makes sense. We shall use the following lemma.
\begin{lemma}
If $\mu$ is an SRB measure of $T=(S,R)$ a uniformly hyperbolic skew product, then the projection $\check\mu$ of $\mu$ to the first factor is absolutely continuous.
\end{lemma}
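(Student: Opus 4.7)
The plan is to combine the local SRB disintegration along unstable leaves with the fact that, for our skew product $T=(S,R)$, the factor map $\pi$ restricts to a local $C^1$ diffeomorphism on each unstable leaf. First, I would cover the attractor $\Lambda$ by countably many product boxes $B$ on each of which the SRB property yields a disintegration
\[
\mu|_B=\int \mu_L \, \dd\nu(L)
\]
indexed by local unstable leaves $L$, with $\mu_L$ absolutely continuous with respect to the induced Riemannian volume $\vol_L$ on $L$ and $\nu$ the quotient measure on the space of local leaves (this is exactly what Definition \ref{defi:SRB} provides).

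The geometric key step is the identity $\ker D\pi_{(y,z)} = \{0\}\times T_zD^2 = E^s_{(y,z)}$, which holds by the very definition of $\pi$ as projection on the first factor and the construction of $E^s$ recalled in the proof of Corollary \ref{coro:mainSRB}. Since $E^u$ is a continuous sub-bundle transverse to $E^s$, the restriction $D\pi|_{E^u}$ is, at every point, a linear isomorphism onto $T\mathbb{T}$. As unstable leaves are $C^1$ manifolds tangent to $E^u$, each local unstable leaf $L$ maps under $\pi$ by a $C^1$ local diffeomorphism. Consequently, for any Lebesgue-null $A\subset\mathbb{T}$, the set $(\pi|_L)^{-1}(A)$ has zero Riemannian measure in $L$, and therefore $\mu_L\big((\pi|_L)^{-1}(A)\big)=0$ by the absolute continuity of $\mu_L$ with respect to $\vol_L$.

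Integrating over leaves would then yield
\[
\pi_*(\mu|_B)(A)=\int \mu_L\big((\pi|_L)^{-1}(A)\big) \, \dd\nu(L) = 0
\]
for every Lebesgue-null $A\subset\mathbb{T}$, hence $\pi_*(\mu|_B)\ll \vol_\mathbb{T}$. Summing over the countable cover gives $\check\mu=\pi_*\mu \ll \vol_\mathbb{T}$, which is the claim.

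The one slightly subtle point I expect to have to spell out is the reconciliation between the local SRB disintegration, which is set up in the excerpt via the projection $\pi^s$ onto the stable coordinate in a local product chart, and the disintegration along $\pi$ used to compute $\pi_*\mu$. This should however be immediate since, in any product box, the local stable leaves of $\Lambda$ are the vertical slices $(\{y\}\times D^2)\cap\Lambda$, i.e.\ pieces of fibers of $\pi$; so $\pi^s$ and $\pi$ agree as quotient maps on each box up to the obvious identification, and the uniqueness clause of the disintegration theorem identifies the two families of conditional measures. Everything else is routine measure theory.
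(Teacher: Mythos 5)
Your proof is correct and takes essentially the same route as the paper's: disintegrate $\mu$ over local product boxes using the SRB property, observe that $\pi$ restricted to each unstable leaf is a $C^1$ local diffeomorphism onto $\mathbb{T}$ so the leafwise conditionals push forward to absolutely continuous measures, and integrate over leaves. The only differences are cosmetic — you make the key geometric fact ($\ker D\pi = E^s$, hence $D\pi|_{E^u}$ an isomorphism) explicit where the paper leaves it implicit, and you test absolute continuity against null sets rather than writing down the density, which is equivalent; the paper also uses a genuine partition of $\pi^{-1}(V)$ rather than a cover, a small point you should tighten by disjointifying your boxes before summing.
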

\begin{proof}
Consider a small open set $V\subset \mathbb{T}$, and partition $\pi^{-1}(V)\subset \Lambda$ into small enough subsets $V_1,\dots, V_k$ such that each $V_k$ is given a product structure by $W^s,W^u$. For each $i\in\{1,\dots,k\}$, let $\mu^i = \mu_{|V_i}$ and write its disintegration with respect to the projection on the stable direction as $\mu^i = \int \mu^i_L \dd\nu^i(L)$. If $\mu$ is SRB, there are positive integrable functions $f^i_L$ such that $\dd\mu^i_L = f^i_L \dvol$ where $\dvol$ is the volume (i.e. Lebesgue measure) on $\mathbb{T}$. Then for any continuous $g:\mathbb{T}\to \mathbb{R}$:
\[\check\mu_{|V}(g) = \sum_{i=1}^k \iint g(y) f^i_L(y) \dvol(y) \dd\nu^i(L) = \int g(y) \Big(\sum_{i=1}^k \int f^i_L(y) \dd\nu^i(L)\Big) \dvol(y) \]
is absolutely continuous.
\end{proof}
Note that we did not use invariance of $\mu$ and that the converse of this Lemma is not obvious: there are (non necessarily invariant) measures that project to the Lebesgue measure without having absolutely continuous disintegrations.

The work of Campbell and Quas \cite{campbell2001generic} shows that taking $S$ generic, we can assume it has a unique physical measure $\check\mu$, with full basin of attraction, but that is singular to $\dvol$ (and thus $S$ has no Acip). Then its lift $\mu$ is a $T$-invariant measure that is physical, with full basin of attraction. Moreover $T$ has no SRB measure, since it would have an absolutely continuous projection.
\end{proof}

\bibliographystyle{alpha}
\bibliography{extensions}
%%%%%%%%%%%%%%%%%%%%%%%%%%%%%%%%%%%%%%%%%%%%%%%%%%%%%%%%%%%%%%%
%%%%%%%%%%%%%%%%%%%%%%%%%%%%%%%%%%%%%%%%%%%%%%%%%%%%%%%%%%%%%%%
\end{document}